\DeclareFontFamily{U}{rsfs}{} 
\DeclareFontShape{U}{rsfs}{n}{it}{<->
rsfs10}{} \DeclareSymbolFont{mscr}{U}{rsfs}{n}{it}
\DeclareSymbolFontAlphabet{\scr}{mscr}
\def\mathscr{\scr}
\begin{document}
\def\e#1\e{\begin{equation}#1\end{equation}}
\def\ea#1\ea{\begin{align}#1\end{align}}
\def\eq#1{{\rm(\ref{#1})}}
\theoremstyle{plain}
\newtheorem{thm}{Theorem}[section]
\newtheorem{lem}[thm]{Lemma}
\newtheorem{prop}[thm]{Proposition}
\newtheorem{cor}[thm]{Corollary}
\newtheorem{quest}[thm]{Question}
\theoremstyle{definition}
\newtheorem{dfn}[thm]{Definition}
\newtheorem{ex}[thm]{Example}
\newtheorem{rem}[thm]{Remark}
\numberwithin{equation}{section}
\numberwithin{figure}{section}
\def\dim{\mathop{\rm dim}\nolimits}
\def\codim{\mathop{\rm codim}\nolimits}
\def\Im{\mathop{\rm Im}\nolimits}
\def\det{\mathop{\rm det}\nolimits}
\def\Ker{\mathop{\rm Ker}}
\def\Coker{\mathop{\rm Coker}}
\def\Flag{\mathop{\rm Flag}\nolimits}
\def\FlagSt{\mathop{\rm FlagSt}\nolimits}
\def\Iso{\mathop{\rm Iso}\nolimits}
\def\Aut{\mathop{\rm Aut}}
\def\End{\mathop{\rm End}\nolimits}
\def\Spec{\mathop{\rm Spec}\nolimits}
\def\Ho{\mathop{\rm Ho}}
\def\GL{\mathop{\rm GL}}
\def\SL{\mathop{\rm SL}}
\def\SO{\mathop{\rm SO}}
\def\SU{\mathop{\rm SU}}
\def\Sp{\mathop{\rm Sp}}
\def\Spin{\mathop{\rm Spin}\nolimits}
\def\U{{\mathbin{\rm U}}}
\def\vol{\mathop{\rm vol}}
\def\inc{\mathop{\rm inc}}
\def\ind{{\rm ind}}
\def\irr{{\rm irr}}
\def\red{{\rm red}}
\def\db{\bar\partial}
\def\rk{\mathop{\rm rk}}
\def\rank{\mathop{\rm rank}\nolimits}
\def\Hom{\mathop{\rm Hom}\nolimits}
\def\id{{\mathop{\rm id}\nolimits}}
\def\Id{{\mathop{\rm Id}\nolimits}}
\def\TopSta{{\mathop{\bf TopSta}\nolimits}}
\def\Map{{\mathop{\rm Map}\nolimits}}
\def\Ad{\mathop{\rm Ad}}
\def\irr{{\rm irr}}
\def\coh{{\rm coh}}
\def\cs{{\rm cs}}
\def\Top{{\mathop{\bf Top}\nolimits}}
\def\Stab{\mathop{\rm Stab}\nolimits}
\def\ul{\underline}
\def\bs{\boldsymbol}
\def\ge{\geqslant}
\def\le{\leqslant\nobreak}
\def\boo{{\mathbin{\mathbf 1}}}
\def\O{{\mathcal O}}
\def\bA{{\mathbin{\mathbb A}}}
\def\bG{{\mathbin{\mathbb G}}}
\def\bL{{\mathbin{\mathbb L}}}
\def\P{{\mathbin{\mathbb P}}}
\def\bT{{\mathbin{\mathbb T}}}
\def\K{{\mathbin{\mathbb K}}}
\def\R{{\mathbin{\mathbb R}}}
\def\Z{{\mathbin{\mathbb Z}}}
\def\Q{{\mathbin{\mathbb Q}}}
\def\N{{\mathbin{\mathbb N}}}
\def\C{{\mathbin{\mathbb C}}}
\def\CP{{\mathbin{\mathbb{CP}}}}
\def\A{{\mathbin{\cal A}}}
\def\B{{\mathbin{\cal B}}}
\def\ovB{{\mathbin{\smash{\,\overline{\!\mathcal B}}}}}\def\G{{\mathbin{\cal G}}}
\def\M{{\mathbin{\cal M}}}
\def\cB{{\mathbin{\cal B}}}
\def\cC{{\mathbin{\cal C}}}
\def\cD{{\mathbin{\cal D}}}
\def\cE{{\mathbin{\cal E}}}
\def\cF{{\mathbin{\cal F}}}
\def\cG{{\mathbin{\cal G}}}
\def\cH{{\mathbin{\cal H}}}
\def\E{{\mathbin{\cal E}}}
\def\F{{\mathbin{\cal F}}}
\def\cG{{\mathbin{\cal G}}}
\def\cH{{\mathbin{\cal H}}}
\def\cI{{\mathbin{\cal I}}}
\def\cJ{{\mathbin{\cal J}}}
\def\cK{{\mathbin{\cal K}}}
\def\cL{{\mathbin{\cal L}}}
\def\cN{{\mathbin{\cal N}\kern .04em}}
\def\cP{{\mathbin{\cal P}}}
\def\cQ{{\mathbin{\cal Q}}}
\def\cR{{\mathbin{\cal R}}}
\def\cS{{\mathbin{\cal S}}}
\def\T{{{\cal T}\kern .04em}}
\def\cW{{\mathbin{\cal W}}}
\def\cX{{\cal X}}
\def\cY{{\cal Y}}
\def\cZ{{\cal Z}}
\def\cV{{\cal V}}
\def\cW{{\cal W}}
\def\g{{\mathfrak g}}
\def\h{{\mathfrak h}}
\def\m{{\mathfrak m}}
\def\u{{\mathfrak u}}
\def\su{{\mathfrak{su}}}
\def\al{\alpha}
\def\be{\beta}
\def\ga{\gamma}
\def\de{\delta}
\def\io{\iota}
\def\ep{\epsilon}
\def\la{\lambda}
\def\ka{\kappa}
\def\th{\theta}
\def\ze{\zeta}
\def\up{\upsilon}
\def\vp{\varphi}
\def\si{\sigma}
\def\om{\omega}
\def\De{\Delta}
\def\Ka{{\rm K}}
\def\La{\Lambda}
\def\Om{\Omega}
\def\Ga{\Gamma}
\def\Si{\Sigma}
\def\Th{\Theta}
\def\Up{\Upsilon}
\def\Chi{{\rm X}}
\def\Tau{{T}}
\def\Nu{{\rm N}}
\def\pd{\partial}
\def\ts{\textstyle}
\def\st{\scriptstyle}
\def\sst{\scriptscriptstyle}
\def\w{\wedge}
\def\sm{\setminus}
\def\lt{\ltimes}
\def\bu{\bullet}
\def\sh{\sharp}
\def\di{\diamond}
\def\he{\heartsuit}
\def\op{\oplus}
\def\od{\odot}
\def\ot{\otimes}
\def\bt{\boxtimes}
\def\ov{\overline}
\def\bigop{\bigoplus}
\def\bigot{\bigotimes}
\def\iy{\infty}
\def\es{\emptyset}
\def\ra{\rightarrow}
\def\rra{\rightrightarrows}
\def\Ra{\Rightarrow}
\def\Longra{\Longrightarrow}
\def\ab{\allowbreak}
\def\longra{\longrightarrow}
\def\hookra{\hookrightarrow}
\def\dashra{\dashrightarrow}
\def\lb{\llbracket}
\def\rb{\rrbracket}
\def\ha{{\ts\frac{1}{2}}}
\def\t{\times}
\def\ci{\circ}
\def\ti{\tilde}
\def\d{{\rm d}}
\def\md#1{\vert #1 \vert}
\def\ms#1{\vert #1 \vert^2}
\def\bmd#1{\big\vert #1 \big\vert}
\def\bms#1{\big\vert #1 \big\vert^2}
\def\an#1{\langle #1 \rangle}
\def\ban#1{\bigl\langle #1 \bigr\rangle}
\def\o{\operatorname{o}}
\def\D{\mathbin{\slashed{\operatorname{D}}}}
\def\O{\mathcal{O}}
\def\Eig{\operatorname{Eig}}
\title{On spin structures and orientations for gauge-theoretic moduli spaces}
\author{Dominic Joyce and Markus Upmeier}
\date{}
\maketitle

\begin{abstract}
 Let $X$ be a compact manifold, $G$ a Lie group, $P\ra X$ a principal $G$-bundle, and $\B_P$ the infinite-dimensional moduli space of connections on $P$ modulo gauge, as a topological stack. For a real elliptic operator $E_\bu$ we previously studied orientations on the real determinant line bundle over $\B_P$, twisting $E_\bu$ by connections $\nabla_{\Ad(P)}$. These are used to construct orientations in the usual sense on smooth gauge theory moduli spaces, and have been extensively studied since the work of Donaldson~\cite{Dona1,Dona2,DoKr}.
 
 Here we consider {\it complex} elliptic operators $F_\bu$ and introduce the idea of {\it spin structures}, square roots of the complex determinant line bundle of $F_\bu$ twisted over $\B_P$. These may be used to construct spin structures in the usual sense on smooth complex gauge theory moduli spaces. We study the existence and classification of such spin structures.
 
 Our main result identifies spin structures on $X$ with orientations on $X\t\cS^1$.  Thus, if $P\ra X$ and $Q\ra X\t\cS^1$ are principal $G$-bundles with $Q\vert_{X\t\{1\}}\cong P$, we relate spin structures on $(\B_P,F_\bu)$ to orientations on $(\B_Q,E_\bu)$ for a certain class of operators $F_\bu$ on $X$ and $E_\bu$ on $X\t\cS^1$.
 
 Combined with \cite{JoUp1}, we obtain canonical spin structures for positive Diracians on spin $6$-manifolds and gauge groups $G=\U(m), \SU(m)$. In a sequel \cite{JoUp2} we will apply this to define canonical {\it orientation data\/} for all Calabi--Yau 3-folds $X$ over $\C$, as in Kontsevich and Soibelman \cite[\S 5.2]{KoSo1}, solving a long-standing problem in Donaldson--Thomas theory.
\end{abstract}

\setcounter{tocdepth}{2}
\tableofcontents

\section{Introduction}
\label{os1}

Let $X$ be a compact manifold, $E_\bu=\bigl(D:\Ga^\iy(E_0)\ra\Ga^\iy(E_1)\bigr)$ a real elliptic operator on $X$, $G$ a Lie group, $P\ra X$ a principal $G$-bundle, and $\B_P$ the infinite-dimensional moduli space of all connections $\nabla_P$ on $P$ modulo gauge, as a topological stack. For each $[\nabla_P]\in\B_P$, we consider the twisted real elliptic operator $D^{\nabla_{\Ad(P)}}:\Ga^\iy(\Ad(P)\ot E_0)\ra\Ga^\iy(\Ad(P)\ot E_1)$ on $X$. As this is a continuous family of real elliptic operators over the base $\B_P$, it has a real determinant line bundle and associated orientation bundle $O^{E_\bu}_P$, a principal $\Z_2$\nobreakdash-bundle parametrizing orientations of $\Ker D^{\nabla_{\Ad(P)}}\op \Coker D^{\nabla_{\Ad(P)}}$ at each $[\nabla_P]$. An {\it orientation\/} on $(\B_P,E_\bu)$ is a trivialization $O^{E_\bu}_P\cong\B_P\t\Z_2$. Orientations were studied in the recent series \cite{CGJ,JTU,JoUp1} by the authors, Yalong Cao and Jacob Gross, and by previous authors such as  Donaldson~\cite{Dona1,Dona2,DoKr}.

In gauge theory one studies moduli spaces $\M_P^{\rm ga}$ of connections $\nabla_P$ on $P$ satisfying some curvature condition, such as anti-self-dual instantons on Riemannian 4-manifolds $(X,g)$. Under good conditions $\M_P^{\rm ga}$ is a smooth manifold, and orientations on $(\B_P,E_\bu)$, where $E_\bu$ is determined by the curvature condition, pull back to orientations on the manifold $\M_P^{\rm ga}$ in the usual sense under the inclusion $\M_P^{\rm ga}\hookra\B_P$. This is important in areas such as Donaldson theory \cite{Dona1,Dona2,DoKr}, where one needs an orientation on $\M_P^{\rm ga}$ to define enumerative invariants.\smallskip

This paper will define and study a {\it complex} version of orientations, which we call {\it spin structures}. Now, let $F_\bu=\bigl(D:\Ga^\iy(F_0)\ra\Ga^\iy(F_1)\bigr)$ be a complex elliptic operator on $X$ and let $G,P,\B_P$ be as above. For each $[\nabla_P]\in\B_P$, the twisted operator $D^{\nabla_{\Ad(P)}}:\Ga^\iy(\Ad(P)\ot F_0)\ra\Ga^\iy(\Ad(P)\ot F_1)$ is now complex linear. As this is a continuous family of complex elliptic operators over $\B_P$, it has a complex determinant line bundle $K^{F_\bu}_P\ra\B_P$. 

A {\it spin structure\/} $\si_P^{F_\bu}$ on $(\B_P,F_\bu)$ is a choice of square root line bundle $(K^{F_\bu}_P)^{1/2}$ on $\B_P$, up to isomorphism, as in Definition~\ref{os3def2}. We are interested in questions such as: Do spin structures exist for $(\B_P,F_\bu)$? What is the family of spin structures? Can we choose a canonical spin structure for $(\B_P,F_\bu)$? Can we relate spin structures for different moduli spaces $\B_P,\B_Q,\B_R$ with $X,F_\bu$ fixed? Can we relate spin structures for $X$ with orientations for~$X\t\cS^1$?

To justify the name, note that if $Y$ is an (almost) complex manifold with canonical bundle $K_Y\ra Y$, then spin structures on $Y$ correspond to isomorphism classes of square roots of $K_Y$. We can think of $\B_P$ as an infinite-dimensional complex manifold, and $K^{F_\bu}_P\ra\B_P$ as its canonical bundle.

Under good conditions in complex gauge theory problems (for example, moduli spaces of Hermitian--Einstein connections on a compact K\"ahler manifold), we may form moduli spaces $\M_P^{\rm ga}$ which are (almost) complex manifolds such that $K^{F_\bu}_P\ra\B_P$ pulls back to the canonical bundle $K_{\M_P^{\rm ga}}$ under the inclusion $\M_P^{\rm ga}\hookra\B_P$. Thus, a spin structure on $(\B_P,F_\bu)$ induces a spin structure on the manifold $\M_P^{\rm ga}$ in the usual sense of differential geometry.

The authors' motivation for studying spin structures stems from the notion of {\it orientation data\/} for Calabi--Yau 3-folds, as introduced by Kontsevich and Soibelman \cite[\S 5.2]{KoSo1}, which are important in generalized Donaldson--Thomas theory (see for instance \cite{BBJ,BBDJS,BJM,Joyc2,KoSo2}). Let $X$ be a Calabi--Yau 3-fold over $\C$ and $\bs\M$ the derived moduli stack of (complexes of) coherent sheaves on $X$. Write $K_{\bs\M}\ra\bs\M$ for the determinant line bundle of the cotangent complex $\bL_{\bs\M}$. Then {\it orientation data\/} for $X$ is basically a choice of (isomorphism class of) square root $K_{\bs\M}^{1/2}$, satisfying certain compatibilities with exact sequences.

Orientation data is an algebro-geometric version of our notion of spin structure for $(\B_P,F_\bu)$. In the sequel \cite{JoUp2} we will apply the results of this paper to prove that canonical orientation data exists for any Calabi--Yau 3-fold over $\C$, solving a long-standing problem in Donaldson--Thomas theory.\smallskip

This will be based on the main theorem of this paper, Theorem \ref{os5thm3}, or more precisely, its consequence Theorem \ref{os5thm5}, obtained in conjunction with our previous result \cite{JoUp1} on $G_2$-instantons. This identifies spin structures on $X$ with orientations on $X\t\cS^1$.  Thus, if $P\ra X$ and $Q\ra X\t\cS^1$ are principal $G$-bundles with $Q\vert_{X\t\{1\}}\cong P$, we relate spin structures on $(\B_P,F_\bu)$ to orientations on $(\B_Q,E_\bu)$ for a certain class of operators $F_\bu$ on $X$ and $E_\bu$ on $X\t\cS^1$. In particular, if $(X,g)$ is a compact, oriented, spin Riemannian 6-manifold and $F_\bu$ is the positive Dirac operator on $X$, then we construct in Theorem~\ref{os5thm5} canonical spin structures on $(\B_P,F_\bu)$ for all principal $\U(m)$- and $\SU(m)$-bundles $P\ra X$.\smallskip

We begin in \S\ref{os2} with background on moduli spaces and orientations on them, taken mostly from \cite[\S 2]{JTU}. Section \ref{os3} introduces spin structures. Section \ref{os4} gives some elementary results, similar to those in \S\ref{os2} for orientations. In \S\ref{os5} we state our deeper theorems about spin structures and prove Theorem~\ref{os5thm5}. The proofs of Theorems \ref{os5thm1}, \ref{os5thm2}, and \ref{os5thm3} are postponed to \S\ref{os6}--\S\ref{os8}, followed by Appendix \ref{osA} on elliptic boundary problems, used in the proof of Theorem~\ref{os5thm2}.\medskip

\noindent{\it Acknowledgements.} This research was partly funded by a Simons Collaboration Grant on `Special Holonomy in Geometry, Analysis and Physics'.  The second author was partly funded by DFG grant UP 85/2-1 of the DFG priority program SPP 2026 `Geometry at Infinity', and by the Centre for Quantum Geometry of Moduli Spaces at Aarhus University. The authors would like to thank Yalong Cao, Jacob Gross, Bernhard Hanke, and Yuuji Tanaka for helpful conversations.

\section{Connection moduli spaces and orientations}
\label{os2}

We begin with background material on orientations on moduli spaces, following \cite{CGJ,JTU,JoUp1,Upme}. Sections \ref{os21}--\ref{os26} summarize Joyce--Tanaka--Upmeier~\cite[\S 1--\S 2]{JTU}. The reader interested only in spin structures may initially restrict to \S\ref{os21}--\S\ref{os22} and refer back to the notation set up in \S\ref{os23}--\S\ref{os26} in the discussion of the analogous properties for spin structures in Section~\ref{os4}.

\subsection{\texorpdfstring{Connection moduli spaces $\A_P,\B_P,\ovB_P$}{Connection moduli spaces 𝒜ᵨ,ℬᵨ}}
\label{os21}

\begin{dfn}
\label{os2def1}
Suppose we are given the following data:
\begin{itemize}
\setlength{\itemsep}{0pt}
\setlength{\parsep}{0pt}
\item[(a)] A compact, connected manifold $X$.
\item[(b)] A Lie group $G$, with $\dim G>0$, and centre $Z(G)\subseteq G$, and Lie algebra $\g$.
\item[(c)] A principal $G$-bundle $\pi:P\ra X$. We write $\Ad(P)\ra X$ for the vector bundle with fibre $\g$ defined by $\Ad(P)=(P\t\g)/G$, where $G$ acts on $P$ by the principal bundle action, and on $\g$ by the adjoint action.
\end{itemize}

Write $\A_P$ for the set of connections $\nabla_P$ on the principal bundle $P\ra X$. This is a real affine space modelled on the infinite-dimensional vector space $\Ga^\iy(\Ad(P)\ot T^*X)$, and we make $\A_P$ into a topological space using the $C^\iy$ topology on $\Ga^\iy(\Ad(P)\ot T^*X)$. Here if $E\ra X$ is a vector bundle then $\Ga^\iy(E)$ denotes the vector space of smooth sections of $E$. Note that $\A_P$ is contractible. 

Write $\G_P=\Aut(P)$ for the infinite-dimensional Lie group of $G$-equivariant diffeomorphisms $\ga:P\ra P$ with $\pi\ci\ga=\pi$. Then $\G_P$ acts on $\A_P$ by gauge transformations, and the action is continuous for the topology on~$\A_P$. 

There is an inclusion $Z(G)\hookra\G_P$ mapping $z\in Z(G)$ to the principal bundle action of $z$ on $P$. This maps $Z(G)$ into the centre $Z(\G_P)$ of $\G_P$, so we may take the quotient group $\G_P/Z(G)$. The action of $Z(G)\subset\G_P$ on $\A_P$ is trivial, so the $\G_P$-action on $\A_P$ descends to a $\G_P/Z(G)$-action. 

Each $\nabla_P\in\A_P$ has a (finite-dimensional) {\it stabilizer group\/} $\Stab_{\G_P}(\nabla_P)\subset\G_P$ under the $\G_P$-action on $\A_P$, with $Z(G)\subseteq\Stab_{\G_P}(\nabla_P)$. As $X$ is connected, $\Stab_{\G_P}(\nabla_P)$ is isomorphic to a closed Lie subgroup $H$ of $G$ with $Z(G)\subseteq H$. As in \cite[p.~133]{DoKr} we call $\nabla_P$ {\it irreducible\/} if $\Stab_{\G_P}(\nabla_P)=Z(G)$, and {\it reducible\/} otherwise. Write $\A_P^\irr,\A_P^\red$ for the subsets of irreducible and reducible connections in $\A_P$. Then $\A_P^\irr$ is open and dense in $\A_P$, and $\A_P^\red$ is closed and of infinite codimension in the infinite-dimensional affine space $\A_P$.

We write $\B_P=[\A_P/\G_P]$ for the moduli space of gauge equivalence classes of connections on $P$, considered as a {\it topological stack\/} in the sense of Metzler \cite{Metz} and Noohi \cite{Nooh1,Nooh2}. Write $\B_P^\irr=[\A_P^\irr/\G_P]$ for the substack $\B_P^\irr\subseteq\B_P$ of irreducible connections. 

\end{dfn}

\begin{rem}
The stacks $\B_P,\ab\B_P^\irr$ have variations $\ovB_P=[\A_P/(\G_P/Z(G))]$, $\ovB_P^\irr=[\A_P^\irr/(\G_P/Z(G))]$. Then $\ovB_P$ is a topological stack, but as $\G_P/Z(G)$ acts freely on $\A_P^\irr$, we may consider $\ovB_P^\irr$ as a topological space, an example of a topological stack. There are natural morphisms $\Pi_P:\B_P\ra\ovB_P$, $\Pi_P^\irr:\B^\irr_P\ra\ovB^\irr_P$.

The inclusions $\B^\irr_P\hookra\B_P$, $\ovB^\irr_P\hookra\ovB_P$ are weak homotopy equivalences of topological stacks in the sense of \cite{Nooh2}. Also $\Pi_P:\B_P\ra\ovB_P$ is a fibration with connected fibre $[*/Z(G)]$. Therefore, for the algebraic topological questions that concern us, working on $\ovB^\irr_P$ and on $\B_P$ are essentially equivalent, so we could just consider the topological space $\ovB^\irr_P$, and not worry about topological stacks at all, following most other authors in the area.

The main reason we do not do this in \cite{JTU} is that to relate orientations on different moduli spaces we consider direct sums of connections, which give a morphism $\Phi_{P,Q}:\B_P\t\B_Q\ra\B_{P\op Q}$, but this and similar morphisms do not make sense for the spaces $\B_P^\irr,\ovB_P,\ovB_P^\irr$, so we prefer to work with the~$\B_P$.	
\end{rem}

\subsection{Real elliptic operators and orientations}
\label{os22}

\begin{dfn}
\label{os2def2}
Let $X$ be a compact manifold. Suppose we are given real vector bundles $E_0,E_1\ra X$, of the same rank $r$, and a linear elliptic partial differential operator $D:\Ga^\iy(E_0)\ra\Ga^\iy(E_1)$, of degree $d$. As a shorthand we write $E_\bu=(E_0,E_1,D)$. With respect to connections $\nabla_{E_0}$ on $E_0\ot\bigot^iT^*X$ for $0\le i<d$, when $e\in\Ga^\iy(E_0)$ we may write
\e
D(e)=\ts\sum_{i=0}^d a_i\cdot \nabla_{E_0}^ie,
\label{os2eq1}
\e
where $a_i\in \Ga^\iy(E_0^*\ot E_1\ot S^iTX)$ for $i=0,\ldots,d$. The condition that $D$ is {\it elliptic\/} is that $a_d\vert_x\cdot\ot^d\xi:E_0\vert_x\ra E_1\vert_x$ is an isomorphism for all $x\in X$ and $0\ne\xi\in T_x^*X$, and the {\it symbol\/} $\si(D)$ of $D$ is defined using~$a_d$.

The {\it index\/} of $D$ is $\ind_\R D=\dim_\R\Ker D-\dim_\R\Coker D$. It can be computed using the Atiyah--Singer Index Theorem~\cite{AtSi1,AtSe,AtSi3,AtSi4,AtSi5}.

Now suppose we are given Euclidean metrics $h_{E_0},h_{E_1}$ on the fibres of $E_0,E_1$ and a volume form $\d V$ on $X$. Then there is a unique {\it adjoint operator\/} $D^*:\Ga^\iy(E_1)\ra\Ga^\iy(E_0)$, which is also a linear elliptic partial differential operator of degree $d$, satisfying for all $e_0\in\Ga^\iy(E_0)$, $e_1\in\Ga^\iy(E_1)$
\e
\int_X h_{E_1}(D(e_0),e_1)\d V=\int_X h_{E_0}(e_0,D^*(e_1))\d V.
\label{os2eq2}
\e
We call $D$ {\it self-adjoint\/} (or {\it anti-self-adjoint\/}), if $E_0=E_1$, $h_{E_0}=h_{E_1}$, and $D=D^*$ (or $D=-D^*$). For example, Dirac operators and Laplacians are self-adjoint. If $D$ is self-adjoint then $\Ker D=\Coker D$ and~$\ind_\R D=0$.
\end{dfn}

We define orientation bundles $O^{E_\bu}_P,\bar O^{E_\bu}_P$ on moduli spaces~$\B_P,\ovB_P$:

\begin{dfn} 
\label{os2def3}
Suppose $X,G,P,\A_P,\B_P,\ovB_P$ are as Definition \ref{os2def1}, and $E_\bu$ is a real elliptic operator on $X$ as in Definition \ref{os2def2}. Let $\nabla_P\in\A_P$. Then $\nabla_P$ induces a connection $\nabla_{\Ad(P)}$ on the vector bundle $\Ad(P)\ra X$. Thus we may form the twisted elliptic operator
\e
\begin{split}
D^{\nabla_{\Ad(P)}}&:\Ga^\iy(\Ad(P)\ot E_0)\longra\Ga^\iy(\Ad(P)\ot E_1),\\
D^{\nabla_{\Ad(P)}}&:e\longmapsto \ts\sum_{i=0}^d (\id_{\Ad(P)}\ot a_i)\cdot \nabla_{\Ad(P)\ot E_0}^ie,
\end{split}
\label{os2eq3}
\e
where $\nabla_{\Ad(P)\ot E_0}$ are the connections on $\Ad(P)\ot E_0\ot\bigot^iT^*X$ for $0\le i<d$ induced by $\nabla_{\Ad(P)}$ and~$\nabla_{E_0}$.

Since $D^{\nabla_{\Ad(P)}}$ is a linear elliptic operator on a compact manifold $X$, it has finite-dimensional kernel $\Ker(D^{\nabla_{\Ad(P)}})$ and cokernel $\Coker(D^{\nabla_{\Ad(P)}})$. The {\it determinant\/} $\det_\R(D^{\nabla_{\Ad(P)}})$ is the 1-dimensional real vector space
\e
\det_\R(D^{\nabla_{\Ad(P)}})=\det_\R\Ker(D^{\nabla_{\Ad(P)}})\ot\bigl(\det_\R\Coker(D^{\nabla_{\Ad(P)}})\bigr)^*,
\label{os2eq4}
\e
where if $V$ is a finite-dimensional real vector space then $\det_\R V=\La_\R^{\dim V}V$.

These operators $D^{\nabla_{\Ad(P)}}$ vary continuously with $\nabla_P\in\A_P$, so they form a family of elliptic operators over the base topological space $\A_P$. Thus as in Atiyah--Singer \cite{AtSi4}, Knudsen--Mumford~\cite{KnMu}, and Quillen~\cite{Quil} there is a natural real line bundle $\hat L{}^{E_\bu}_P\ra\A_P$ with fibre $\hat L{}^{E_\bu}_P\vert_{\nabla_P}=\det_\R(D^{\nabla_{\Ad(P)}})$ at each $\nabla_P\in\A_P$. It is equivariant under the actions of $\G_P$ and $\G_P/Z(G)$ on $\A_P$, and so pushes down to real line bundles $L^{E_\bu}_P\ra\B_P$, $\bar L^{E_\bu}_P\ra\ovB_P$ on the topological stacks $\B_P,\ovB_P$, with $L^{E_\bu}_P\cong\Pi_P^*(\bar L_P^{E_\bu})$. We call $L^{E_\bu}_P,\bar L^{E_\bu}_P$ the {\it determinant line bundles\/} of $\B_P,\ovB_P$. The restriction $\bar L^{E_\bu}_P\vert_{\ovB_P^\irr}$ is a topological real line bundle in the usual sense on the topological space~$\ovB_P^\irr$.

Define the {\it orientation bundle\/} $O^{E_\bu}_P$ of $\B_P$ by $O^{E_\bu}_P=(L^{E_\bu}_P\sm 0(\B_P))/(0,\iy)$. That is, we take the complement $L^{E_\bu}_P\sm 0(\B_P)$ of the zero section $0(\B_P)$ in $L^{E_\bu}_P$, and quotient by $(0,\iy)$ acting on the fibres of $L^{E_\bu}_P\sm 0(\B_P)\ra\B_P$ by multiplication. Then $L^{E_\bu}_P\ra\B_P$ descends to $\pi:O^{E_\bu}_P\ra\B_P$, which is a bundle with fibre $(\R\sm\{0\})/(0,\iy)\cong\{1,-1\}=\Z_2$, since $L^{E_\bu}_P\ra\B_P$ is a fibration with fibre $\R$. That is, $\pi:O^{E_\bu}_P\ra\B_P$ is a {\it principal\/ $\Z_2$-bundle}, in the sense of topological stacks.

Similarly we define a $\G_P$-equivariant principal $\Z_2$-bundle $\hat O{}^{E_\bu}_P\ra \A_P$ and a principal $\Z_2$-bundle $\bar\pi:\bar O^{E_\bu}_P\ra\ovB_P$ from $\bar L^{E_\bu}_P$, and as $L^{E_\bu}_P\cong\Pi_P^*(\bar L_P^{E_\bu})$ we have a canonical isomorphism $O^{E_\bu}_P\cong\Pi_P^*(\bar O_P^{E_\bu})$. The fibres of $O^{E_\bu}_P\ra\B_P$, $\bar O^{E_\bu}_P\ra\ovB_P$ are orientations on the real line fibres of $L^{E_\bu}_P\ra\B_P$, $\bar L^{E_\bu}_P\ra\ovB_P$. The restriction $\bar O^{E_\bu}_P\vert_{\ovB^\irr_P}$ is a principal $\Z_2$-bundle on the topological space $\ovB^\irr_P$, in the usual sense.

We say that $\B_P$ is {\it orientable\/} if $O^{E_\bu}_P$ is isomorphic to the trivial principal $\Z_2$-bundle $\B_P\t\Z_2\ra\B_P$. An {\it orientation\/} $\om$ on $\B_P$ is an isomorphism $\om:O^{E_\bu}_P\,{\buildrel\cong\over\longra}\,\B_P\t\Z_2$ of principal $\Z_2$-bundles. We make the same definitions for $\ovB_P$ and $\bar O^{E_\bu}_P$. Since $\Pi_P:\B_P\ra\ovB_P$ is a fibration with fibre $[*/Z(G)]$, which is connected and simply-connected, and $O^{E_\bu}_P\cong\Pi_P^*(\bar O_P^{E_\bu})$, we see that $\B_P$ is orientable if and only if $\ovB_P$ is, and orientations of $\B_P$ and $\ovB_P$ correspond. As $\B_P$ is connected, if $\B_P$ is orientable it has exactly two orientations.

We also define the {\it normalized orientation bundle}, or {\it n-orientation bundle}, a principal $\Z_2$-bundle $\check O_P^{E_\bu}\ra\B_P$, by
\e
\check O_P^{E_\bu}=O_P^{E_\bu}\ot_{\Z_2}O_{X\t G}^{E_\bu}\vert_{[\nabla^0]}.
\label{os2eq5}
\e
That is, we tensor the $O_P^{E_\bu}$ with the orientation torsor $O_{X\t G}^{E_\bu}\vert_{[\nabla^0]}$ of the trivial principal $G$-bundle $X\t G\ra X$ at the trivial connection $\nabla^0$. A {\it normalized orientation}, or {\it n-orientation}, of $\B_P$ is an isomorphism $\check\om:\check O^{E_\bu}_P\,{\buildrel\cong\over\longra}\,\B_P\t\Z_2$. There is a natural n-orientation of $\B_{X\t G}$ at~$[\nabla^0]$.

N-orientations and orientations are equivalent once we choose an isomorphism $O_{X\t G}^{E_\bu}\vert_{[\nabla^0]}\cong\Z_2$. N-orientations behave nicely under the Excision Theorem in Upmeier \cite[Th.~2.13]{Upme}, and in examples there are often canonical choices of n-orientations, so we use them in preference to orientations.
\end{dfn}

\begin{rem}{\bf(i)} Up to continuous isotopy, and hence up to isomorphism, $L^{E_\bu}_P,O^{E_\bu}_P,\check O_P^{E_\bu}$ in Definition \ref{os2def3} depend on the elliptic operator $D:\Ga^\iy(E_0)\ra\Ga^\iy(E_1)$ up to continuous deformation amongst elliptic operators, and thus only on the {\it symbol\/} $\si(D)$ of $D$ (essentially, the highest order coefficients $a_d$ in \eq{os2eq1}), up to deformation.
\smallskip

\noindent{\bf(ii)} For orienting moduli spaces of `instantons' in gauge theory, as in \S\ref{os28}, we usually start not with an elliptic operator on $X$, but with an {\it elliptic complex\/}
\e
\smash{\xymatrix@C=28pt{ 0 \ar[r] & \Ga^\iy(E_0) \ar[r]^{D_0} & \Ga^\iy(E_1) \ar[r]^(0.55){D_1} & \cdots \ar[r]^(0.4){D_{k-1}} & \Ga^\iy(E_k) \ar[r] & 0. }}
\label{os2eq6}
\e
If $k>1$ and $\nabla_P$ is an arbitrary connection on a principal $G$-bundle $P\ra X$ then twisting \eq{os2eq6} by $(\Ad(P),\nabla_{\Ad(P)})$ as in \eq{os2eq3} may not yield a complex (that is, we may have $D^{\nabla_{\Ad(P)}}_{i+1}\ci D^{\nabla_{\Ad(P)}}_i\ne 0$), so the definition of $\det_\R(D_\bu^{\nabla_{\Ad(P)}})$ does not work, though it does work if $\nabla_P$ satisfies the appropriate instanton-type curvature condition. To get round this, we choose metrics on the $E_i$ and a volume form $\d V$ on X, so that we can take adjoints $D_i^*$, and replace \eq{os2eq6} by the elliptic operator
\e
\smash{\xymatrix@C=90pt{ \Ga^\iy\bigl(\bigop_{0\le i\le k/2}E_{2i}\bigr) \ar[r]^(0.48){\sum_i(D_{2i}+D_{2i-1}^*)} & \Ga^\iy\bigl(\bigop_{0\le i< k/2}E_{2i+1}\bigr), }}
\label{os2eq7}
\e
and then Definition \ref{os2def3} works with \eq{os2eq7} in place of~$E_\bu$.
\smallskip

\noindent{\bf(iii)} If $D$ is self-adjoint then $D^{\nabla_{\Ad(P)}}$ is too, so $\Ker(D^{\nabla_{\Ad(P)}})=\Coker(D^{\nabla_{\Ad(P)}})$, and in \eq{os2eq4} we have a canonical isomorphism $\det_\R(D^{\nabla_{\Ad(P)}})\cong\R$. Surprisingly, this does {\it not\/} imply that $L^{E_\bu}_P,O^{E_\bu}_P,\check O_P^{E_\bu}$ are trivial --- they may not be --- since these isomorphisms $\det_\R(D^{\nabla_{\Ad(P)}})\cong\R$ do not vary continuously with~$\nabla_P$.
\label{os2rem1}	
\end{rem}

\subsection{\texorpdfstring{Natural n-orientations when $G$ is abelian}{Natural n-orientations when G is abelian}}
\label{os23}

In the situation of Definition \ref{os2def3}, suppose the Lie group $G$ is abelian, for example $G=\U(1)$. Then as in \cite[\S 2.2.3]{JTU} the line bundles $\hat L{}^{E_\bu}_P\ra\A_P$, $L^{E_\bu}_P\ra\B_P$, $\bar L^{E_\bu}_P\ra\ovB_P$ are all canonically trivial, with fibre 
\begin{equation*}
(\det_\R D)^{\ot^{\dim\g}}\ot(\La_\R^{\dim\g}\g)^{\ot^{\ind_\R D}}.
\end{equation*}
This implies that the n-orientation bundle $\check O^{E_\bu}_P\ra\B_P$ is canonically trivial.

\subsection{N-orientations on products of moduli spaces}
\label{os24}

This section comes from \cite[\S 2.2.7]{JTU}. Let $X$ and $E_\bu$ be fixed, and suppose $G,H$ are Lie groups, and $P\ra X$, $Q\ra X$ are principal $G$- and $H$-bundles. Then $P\t_XQ$ is a principal $G\t H$ bundle over $X$. There is a natural 1-1 correspondence between pairs $(\nabla_P,\nabla_Q)$ of connections $\nabla_P,\nabla_Q$ on $P,Q$, and connections $\nabla_{P\t_XQ}$ on $P\t_XQ$. This induces an isomorphism of topological stacks $\La_{P,Q}:\B_P\t\B_Q\ra\B_{P\t_XQ}$.

For $(\nabla_P,\nabla_Q)$ and $\nabla_{P\t_XQ}$ as above, there are also natural isomorphisms
\begin{align*}
\Ker(D^{\nabla_{\Ad(P)}})\op \Ker(D^{\nabla_{\Ad(Q)}})&\cong\Ker(D^{\nabla_{\Ad(P\t_XQ)}}),\\
\Coker(D^{\nabla_{\Ad(P)}})\op \Coker(D^{\nabla_{\Ad(Q)}})&\cong\Coker(D^{\nabla_{\Ad(P\t_XQ)}}).\end{align*}
With the orientation conventions of \cite[\S 3]{Upme}, these induce a natural isomorphism
\e
\det(D^{\nabla_{\Ad(P)}})\ot \det(D^{\nabla_{\Ad(Q)}})\cong\det(D^{\nabla_{\Ad(P\t_XQ)}}),
\label{os2eq8}
\e
which is the fibre at $(\nabla_P,\nabla_Q)$ of an isomorphism of line bundles on~$\B_P\t\B_Q$:
\e
L_P^{E_\bu}\bt L_Q^{E_\bu}\cong\La_{P,Q}^*(L_{P\t_XQ}^{E_\bu}).
\label{os2eq9}
\e
This induces an isomorphism of n-orientation bundles on~$\B_P\t\B_Q$:
\e
\check\la^{E_\bu}_{P,Q}:\check O_P^{E_\bu}\bt\check O_Q^{E_\bu}\longra
\La_{P,Q}^*(\check O_{P\t_XQ}^{E_\bu}).
\label{os2eq10}
\e
Hence n-orientations on $\B_P,\B_Q$ induce an n-orientation on $\B_{P\t_XQ}$.

\begin{rem}
\label{os2rem2}
Equation \eq{os2eq8} is defined using an orientation convention as in \cite[\S 3.1--\S 3.2]{Upme} and depends on the order of $P$ and $Q$. Exchanging $P$ and $Q$ modifies \eq{os2eq8}--\eq{os2eq10} by signs as in \cite[(2.3)]{JTU}. Write $\ind^{E_\bu}_P=\ind_\R D^{\nabla_{\Ad(P)}}$. Under the natural isomorphisms $\B_P\t\B_Q\cong\B_Q\t\B_P$, $\check O_P^{E_\bu}\bt\check O_Q^{E_\bu}\cong \check O_Q^{E_\bu}\bt\check O_P^{E_\bu}$,
\e
\check\la_{P,Q}^{E_\bu}=(-1)^{(\ind^{E_\bu}_P+\ind^{E_\bu}_{X\t G})\cdot(\ind^{E_\bu}_Q+\ind^{E_\bu}_{X\t H})}\cdot\check\la_{Q,P}^{E_\bu}
\label{os2eq11}
\e
holds for the n-orientation bundles. On the other hand, given a third principal $K$-bundle $R\to X$, \eq{os2eq8}--\eq{os2eq10} are associative without any sign corrections.
\end{rem}

\subsection{\texorpdfstring{Relating moduli spaces for discrete quotients $G\twoheadrightarrow H$}{Relating moduli spaces for discrete quotients G→H}}
\label{os25}

This section comes from  \cite[\S 2.2.8]{JTU}. Suppose $G$ is a Lie group, $K\subset G$ a discrete normal subgroup, and set $H=G/K$ for the quotient Lie group. Let $X,E_\bu$ be fixed. If $P\ra X$ is a principal $G$-bundle, then $Q:=P/K$ is a principal $H$-bundle over $X$. Each $G$-connection $\nabla_P$ on $P$ induces a natural $H$-connection $\nabla_Q$ on $Q$, and the map $\nabla_P\mapsto\nabla_Q$ induces a natural morphism $\De_{P,Q}:\B_P\ra\B_Q$ of topological stacks, which is a bundle with fibre $[*/K]$. If $\nabla_P,\nabla_Q$ are as above then the natural isomorphism $\g\cong\h$ induces an isomorphism $\Ad(P)\cong\Ad(Q)$ of vector bundles on $X$, which identifies the connections $\nabla_{\Ad(P)},\nabla_{\Ad(Q)}$. Hence the twisted elliptic operators $D^{\nabla_{\Ad(P)}},D^{\nabla_{\Ad(Q)}}$ are naturally isomorphic, and so are their determinants \eq{os3eq2}. This gives a canonical isomorphism
\e
L_P^{E_\bu}\cong\De_{P,Q}^*(L_Q^{E_\bu}),
\label{os2eq13}
\e
which induces an isomorphism of n-orientation bundles
\e
\check\de_{P,Q}^{E_\bu}:\check O_P^{E_\bu}\,{\buildrel\cong\over\longra}\,\De_{P,Q}^*(\check O_Q^{E_\bu}).
\label{os2eq14}
\e
Hence n-orientations on $\B_Q$ pull back to n-orientations on~$\B_P$.
 
\begin{ex}
\label{os2ex1}
Take $G=\SU(m)\t\U(1)$, and define $K\subset G$ by
\begin{equation*}
K=\bigl\{(e^{2\pi ik/m}\Id_m,e^{-2\pi ik/m}):k=1,\ldots,m\bigr\}\cong\Z_m.
\end{equation*}
Then $K$ lies in the centre $Z(G)$, so is normal in $G$, and $H=G/K\cong\U(m)$. 

For fixed $X,E_\bu$, let $P\ra X$ be a principal $\SU(m)$-bundle, and $P'=X\t\U(1)\ra X$ be the trivial principal $\U(1)$-bundle. Write $P''=P\t\U(1)=P\t_XP'\ra X$ for the associated principal $\SU(m)\t\U(1)$-bundle, with trivial connection $\nabla^0$, and define $Q=(P\t\U(1))/\Z_m\cong(P\t\U(m))/\SU(m)$ to be the quotient principal $\U(m)$-bundle. Define a morphism $\Ka_{P,Q}:\B_P\ra\B_Q$ by the commutative diagram
\e
\begin{gathered}
\xymatrix@!0@C=141pt@R=35pt{ 
*+[r]{\B_P} \ar[d]^{(\id_{\B_P},[\nabla^0])} \ar@/^1pc/[drr]^(0.6){\Ka_{P,Q}}
\\
*+[r]{\B_P\t\B_{P'}} \ar[r]^(0.45){\La_{P,P'}} & \B_{P\t\U(1)}=\B_{P''} \ar[r]^(0.45){\De_{P'',Q}} & *+[l]{\B_Q,} }
\end{gathered}
\label{os2eq15}
\e
where $\La_{P,P'}$ is as in \S\ref{os24}. Define an isomorphism of n-orientation bundles $\check\ka_{P,Q}^{E_\bu}:\check O_P^{E_\bu}\ra \Ka_{P,Q}^*(\check O_Q^{E_\bu})$ by the commutative diagram
\e
\begin{gathered}
\!\!\!\!\xymatrix@!0@C=141pt@R=45pt{ 
*+[r]{\check O_P^{E_\bu}} \ar[d]^(0.45){\id_{\check O_P^{E_\bu}}\ot\io} \ar[rr]_{\check\ka_{P,Q}^{E_\bu}} && *+[l]{\Ka_{P,Q}^*(\check O_Q^{E_\bu})} \ar@{=}[d]
\\
*+[r]{\begin{subarray}{l}\ts (\id_{\B_P},[\nabla^0])^* \\ \ts (\check O_P^{E_\bu}\bt \check O_{P'}^{E_\bu})\end{subarray}} \ar[r]^{\begin{subarray}{l}(\id_{\B_P},[\nabla^0])^* \\ \;\>(\check\la_{P,P'}^{E_\bu}) \end{subarray}} &
*+[l]{\begin{subarray}{l}\ts (\id_{\B_P},[\nabla^0])^* \\ \ts \ci\La_{P,P'}^*(\check O_{P''}^{E_\bu})\end{subarray}}  \ar[r]^(0.3){\begin{subarray}{l} (\id_{\B_P},[\nabla^0])^*\ci{} \\ \La_{P,P'}^*(\de_{P'',Q}^{E_\bu})\end{subarray}} & *+[l]{\begin{subarray}{l}\ts \qquad\,\,\,\, (\id_{\B_P},[\nabla^0])^*\ci{} \\ \ts (\De_{P'',Q}\!\ci\!\La_{P,P'})^*(\check O_Q^{E_\bu}).\end{subarray}\;\>}\!\!\!\!
}
\end{gathered}
\label{os2eq16}
\e
Here $\io:\Z_2\ra\check O_{P'}^{E_\bu}\vert_{[\nabla^0]}$ is the natural isomorphism, and $\check\la_{P,P'}^{E_\bu},\de_{P'',Q}^{E_\bu}$ are as in \eq{os2eq10}, \eq{os2eq14}. Hence n-orientations on $\B_Q$ pull back to n-orientations on~$\B_P$.
\end{ex}

\subsection{\texorpdfstring{Relating moduli spaces for Lie subgroups $G\subset H$}{Relating moduli spaces for Lie subgroups G⊂H}}
\label{os26}

This section comes from \cite[\S 2.2.9]{JTU}. Let $X,E_\bu$ be fixed, and let $H$ be a Lie group and $G\subset H$ a Lie subgroup, with Lie algebras $\g\subset\h$. If $P\ra X$ is a principal $G$-bundle, then $Q:=(P\t H)/G$ is a principal $H$-bundle over $X$. Each $G$-connection $\nabla_P$ on $P$ induces a natural $H$-connection $\nabla_Q$ on $Q$, and the map $\nabla_P\mapsto\nabla_Q$ induces a natural morphism $\Xi_{P,Q}:\B_P\ra\B_Q$ of topological stacks. Thus, we can try to compare the line bundles $L_P^{E_\bu},\Xi_{P,Q}^*(L_Q^{E_\bu})$ on~$\B_P$.

Write $\m=\h/\g$, and $\rho_\R:G\ra\Aut(\m)$ for the real representation induced by the adjoint representation of $H\supset G$. Then we have an exact sequence 
\e
\xymatrix@C=30pt{ 0 \ar[r] & \Ad(P) \ar[r] & \Ad(Q) \ar[r] & \rho_\R(P)=(P\t\m)/G \ar[r] & 0 }	
\label{os2eq17}
\e
of vector bundles on $X$, induced by $0\ra\g\ra\h\ra\m\ra 0$. If $\nabla_P,\nabla_Q$ are as above, we have connections $\nabla_{\Ad(P)},\nabla_{\Ad(Q)},\nabla_{\rho_\R(P)}$ on $\Ad(P),\Ad(Q),\rho_\R(P)$ compatible with \eq{os2eq17}. Twisting $E_\bu$ by $\Ad(P),\ab\Ad(Q),\ab\rho_\R(P)$ and their connections and taking determinants, as in Remark \ref{os2rem2} we define an isomorphism
\begin{equation*}
\det_\R(D^{\nabla_{\Ad(P)}})\ot_\R \det_\R(D^{\nabla_{\rho_\R(P)}})\cong \det_\R(D^{\nabla_{\Ad(Q)}}),
\end{equation*}
which is the fibre at $\nabla_P$ of an isomorphism of line bundles on $\B_P$,
\e
L_P^{E_\bu}\ot_\R L_{P,\rho_\R}^{E_\bu}\cong\Xi_{P,Q}^*(L_Q^{E_\bu}),
\label{os2eq18}
\e
where $L_{P,\rho_\R}^{E_\bu}\ra\B_P$ is the determinant line bundle associated to the family of real elliptic operators $\nabla_P\mapsto D^{\nabla_{\rho_\R(P)}}$ on $\B_P$.

Now suppose that we can give $\m$ the structure of a {\it complex\/} vector space $\m^\C$, such that $\rho_\C=\rho_\R:G\ra\Aut(\m^\C)$ is complex linear. This happens if $H/G$ has an (almost) complex structure homogeneous under $H$. Then we can regard $\nabla_P\mapsto D^{\nabla_{\rho_\R(P)}}=D^{\nabla_{\rho_\C(P)}}$ as a family of {\it complex\/} elliptic operators over $\B_P$, so they have a {\it complex\/} determinant line bundle $K_{P,\rho_\C}^{E_\bu}\ra\B_P$. There is a natural isomorphism $L_{P,\rho_\R}^{E_\bu}\cong\La_\R^2K_{P,\rho_\C}^{E_\bu}$. As $K_{P,\rho_\C}^{E_\bu}$ is complex, $L_{P,\rho_\R}^{E_\bu}$ has a natural orientation. Thus, taking (n-)orientation bundles in \eq{os2eq18}, the contribution from $L_{P,\rho_\R}^{E_\bu}$ is trivial, and we obtain an isomorphism of n-orientation bundles
\e
\check\xi_{P,Q}^{E_\bu}:\check O_P^{E_\bu}\longra\Xi_{P,Q}^*(\check O_Q^{E_\bu}).
\label{os2eq19}
\e
Hence n-orientations on $\B_Q$ induce n-orientations on $\B_P$.

The next example is a kind of converse to Example~\ref{os2ex1}.

\begin{ex} Define an inclusion $\U(m)\hookra\SU(m+1)$ by mapping
\begin{equation*}
A\longmapsto \begin{pmatrix} A & \begin{matrix} 0 \\ \vdots \\ 0 \end{matrix} \\ \begin{matrix} 0 & \cdots & 0 \end{matrix} & (\det A)^{-1} \end{pmatrix},\qquad A\in\U(m).	
\end{equation*}
There is an isomorphism $\m=\su(m+1)/\u(m)\cong\C^m$, such that $A\in\U(m)$ acts on $\m\cong\C^m$ by $A:\bs x\mapsto \det A\cdot A\bs x$, which is complex linear on~$\m$. 

For fixed $X,E_\bu$, let $Q\ra X$ be a principal $\U(m)$-bundle, and $R=(Q\t\SU(m+1))/\U(m)$ its $\SU(m+1)$-bundle. Then n-orientations on $\B_R$ pull back to n-orientations on $\B_Q$.
\label{os2ex2}	
\end{ex}

\begin{ex} We have an inclusion $G=\U(m_1)\t\U(m_2)\subset\U(m_1+m_2)=H$ for $m_1,m_2\ge 1$, with $\u(m_1+m_2)/(\u(m_1)\op\u(m_2))=\m\cong\C^{m_1}\ot_\C\ov{\C^{m_2}}$, where $G=\U(m_1)\t\U(m_2)$ acts on $\C^{m_1}\ot_\C\ov{\C^{m_2}}$ via the usual representations of $\U(m_1),\U(m_2)$ on $\C^{m_1},\C^{m_2}$, with $\ov{\C^{m_2}}$ the complex conjugate of $\C^{m_2}$, so the representation $\rho_\R=\rho_\C$ is complex linear.

For fixed $X,E_\bu$, suppose $P_1\ra X$, $P_2\ra X$ are principal $\U(m_1)$- and $\U(m_2)$-bundles. Define a principal $\U(m_1+m_2)$-bundle $P_1\op P_2\ra X$ by 
\e
P_1\op P_2=(P_1\t_XP_2\t\U(m_1+m_2))/(\U(m_1)\t\U(m_2)).
\label{os2eq20}
\e
Then combining the material of \S\ref{os24} for the product of $\U(m_1),\U(m_2)$ with the above, we have a morphism
\e
\Phi_{P_1,P_2}:=\Xi_{P_1\t_XP_2,P_1\op P_2}\ci\La_{P_1,P_2}:\B_{P_1}\t\B_{P_2}\longra\B_{P_1\op P_2},
\label{os2eq21}
\e
and a natural isomorphism of n-orientation bundles on~$\B_{P_1}\t\B_{P_2}$:
\e
\begin{split}
&\check\phi^{E_\bu}_{P_1,P_2}=\La_{P_1,P_2}^*(\check\xi_{P_1\t_XP_2,P_1\op P_2}^{E_\bu})\ci\check\la^{E_\bu}_{P_1,P_2}:\\
&\qquad \check O_{P_1}^{E_\bu}\bt_{\Z_2}\check O_{P_2}^{E_\bu}\,{\buildrel\cong\over\longra}\,
\Phi_{P_1,P_2}^*(\check O_{P_1\op P_2}^{E_\bu})
\end{split}
\label{os2eq22}
\e
Thus n-orientations for $\B_{P_1}$ and $\B_{P_2}$ determine an n-orientation for $\B_{P_1\op P_2}$.
\label{os2ex3}	
\end{ex}

\subsection{Example theorems on orientability and orientations}
\label{os27}

The next theorem summarizes results from {\bf(a)\rm,\bf(b)} Joyce--Tanaka--Upmeier \cite[\S 4]{JTU}, {\bf(a)} Taubes~\cite[\S 2]{Taub}, {\bf(b)} Donaldson \cite[II.4]{Dona1}, \cite[\S 3(d)]{Dona2}, \cite[\S 5.4]{DoKr}, {\bf(c)} Walpuski \cite[Prop.~6.3]{Walp}, Joyce--Upmeier \cite[Th.~1.2]{JoUp1}, and {\bf(d)} Cao--Gross--Joyce \cite[Th.~1.11]{CGJ}.

\begin{thm}
\label{os2thm1}
Let\/ $X$ be a compact, oriented\/ $n$-manifold, supposed spin for {\bf(c)\rm,\bf(d)\rm,}\/ $E_\bu$ be a real elliptic operator on\/ $X$,\/ $G$ a Lie group, and\/ $P\ra X$ a principal\/ $G$-bundle. Then\/ $O_P^{E_\bu}\ra\B_P$ is \textup(n-\textup)orientable in the following cases:
\begin{itemize}
\setlength{\itemsep}{0pt}
\setlength{\parsep}{0pt}
\item[{\bf(a)}] $n=2$ or\/ $3$ and\/ $E_\bu$ is\/ $\d+\d^*:\Ga^\iy(\La^{\rm even}T^*X)\ra \Ga^\iy(\La^{\rm odd}T^*X);$
\item[{\bf(b)}] $n=4$ and\/ $E_\bu$ is\/ $\d+\d_+^*:\Ga^\iy(\La^0T^*X\op\La^2_+T^*X)\ra \Ga^\iy(\La^1T^*X);$
\item[{\bf(c)}] $n=7,$ $E_\bu$ is the Dirac operator\/ $\slashed{D},$ and\/ $G=\SU(m)$ or\/ $\U(m);$ and
\item[{\bf(d)}] $n=8,$ $E_\bu$ is the positive Dirac operator\/ $\slashed{D}_+,$ and\/ $G=\SU(m)$ or\/~$\U(m)$.
\end{itemize}

As in {\rm\S\ref{os28},} these are relevant to orienting gauge theory moduli spaces of\/ {\bf(a)} flat connections on\/ $2$- and\/ $3$-manifolds, {\bf(b)} anti-self-dual instantons on\/ $4$\nobreakdash-manifolds, {\bf(c)}\/ $G_2$-instantons on Riemannian\/ $7$-manifolds with holonomy\/ $G_2,$ and\/ {\bf(d)}\/ $\Spin(7)$-instantons on Riemannian\/ $8$-manifolds with holonomy\/ $\SU(4)$ or\/ $\Spin(7)$. In cases {\bf(a)\rm--\bf(c)\rm,} after choosing a small amount of extra data\/ $\scr D$ on\/ $X,$ we can construct canonical \textup(n-\textup)orientations on\/ $\B_P$ for all such\/~$P$.
\end{thm}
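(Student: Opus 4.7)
The plan is to prove all four cases by showing that the first Stiefel--Whitney class $w_1(\bar O_P^{E_\bu}\vert_{\ovB^\irr_P})\in H^1(\ovB^\irr_P;\Z_2)$ vanishes, and then, where the theorem promises canonical orientations, to construct them explicitly from the extra data $\scr D$. Since $\ovB^\irr_P$ has the weak homotopy type of a mapping space determined by the topological type of $P$, this class is in principle computable via the families index theorem of Atiyah--Singer applied to the family $\{D^{\nabla_{\Ad(P)}}\}_{\nabla_P\in\A_P}$.

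First I would organize the calculation using the operations of Sections \ref{os24}--\ref{os26}. Example \ref{os2ex3} reduces the $\U(m)$ case to iterated direct sums of $\U(1)$-bundles via the natural isomorphism $\check\phi_{P_1,P_2}^{E_\bu}$, and Example \ref{os2ex2} relates $\U(m)$- to $\SU(m+1)$-bundles, so for cases (c) and (d) the two structure groups are linked. Combined with \S\ref{os23}, where $\check O_P^{E_\bu}$ is canonically trivial when $G$ is abelian, this reduces orientability to coherence of the defect isomorphisms $\check\phi^{E_\bu}$ and $\check\xi^{E_\bu}$ across decompositions, together with the Excision Theorem of Upmeier which identifies $\bar O_P^{E_\bu}$ with a model bundle outside the support of any change of $P$.

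For cases (a) and (b) the explicit computation is tractable because in dimensions $n\le 4$ the symbol data of $\d+\d^*$ or $\d+\d_+^*$ pairs with the first Pontryagin class or second Chern class of $P$ in a way accessible via spectral-flow arguments around loops in $\A_P/\G_P$; this yields the vanishing of $w_1$, following Taubes and Donaldson--Kronheimer. For case (c) with $n=7$, the twisted Dirac operator $\D^{\nabla_{\Ad(P)}}$ on a spin $7$-manifold carries a symplectic (quaternionic) symmetry on $\Ker\D$, which halves the mod-$2$ ambiguity and forces orientability for $\SU(m)$- and $\U(m)$-bundles, following Walpuski and Joyce--Upmeier. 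For case (d) with $n=8$, a complex structure on the positive Dirac kernel reduces orientability to the parity of an integer characteristic number that vanishes on spin $8$-manifolds for these groups, giving the Cao--Gross--Joyce argument.

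The hardest step is case (d), both because dimension $8$ lies at the edge of direct families-index techniques and because the canonical-orientation clause is conspicuously omitted there; a cleaner route, and the one the rest of this paper develops, is the reduction of spin-$8$ orientability to the $n=7$ case on $X\t\cS^1$ via the spin-structure/orientation correspondence. For the canonical-orientation clause of (a)--(c), the extra data $\scr D$ is typically a flag structure or a stable trivialization of $P$ over a low-dimensional skeleton of $X$; the final combinatorial check is that the trivialization propagated through the excision and product isomorphisms is independent of the chosen decomposition, which amounts to verifying the associativity and symmetry relations recorded in \S\ref{os24}.
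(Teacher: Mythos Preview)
This theorem is not proved in the paper at all; it is a survey statement, and the paper's ``proof'' consists entirely of the citations given immediately before it (Taubes, Donaldson, Walpuski, Joyce--Upmeier \cite{JoUp1}, Cao--Gross--Joyce). So your task was essentially to reconstruct those external arguments, and several of the reconstructions contain genuine errors.

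First, the reduction step is broken: you cannot reduce a general $\U(m)$-bundle to a sum of $\U(1)$-bundles via Example~\ref{os2ex3}, since most $\U(m)$-bundles do not split as sums of line bundles. The actual arguments use stabilization (adding trivial summands until $2m\ge n$, cf.\ \eq{os4eq11}) and then compute on a single stable-range bundle; the abelian triviality of \S\ref{os23} plays no role in reducing the non-abelian case.

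Second, your mechanism for cases (c) and (d) is wrong at the level of Clifford algebra. In dimension $7$ the spinor bundle is real of rank $8$ and the Dirac operator is a real self-adjoint operator; there is no quaternionic/symplectic structure on $\Ker\slashed{D}^{\nabla_{\Ad(P)}}$ that ``halves the mod-$2$ ambiguity''. The proof in \cite{JoUp1} is an elaborate excision/bordism argument using flag structures (exactly the extra data $\scr D$ the paper mentions after the theorem), not a symmetry argument. Likewise in dimension $8$ the half-spinor bundles $S_\pm$ are real, so there is no complex structure on the positive Dirac kernel; the Cao--Gross--Joyce proof \cite{CGJ} is a direct algebraic-topology computation of the obstruction in $H^1(\B_P;\Z_2)$ via the families index, not a parity-of-complex-index argument.

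Finally, you have the direction of the $X\leftrightarrow X\times\cS^1$ correspondence backwards. The rest of this paper relates \emph{spin structures} on $\B_P$ for $P\to X$ (dimension $n$) to \emph{orientations} on $\B_Q$ for $Q\to X\times\cS^1$ (dimension $n+1$); its main application takes $n=6$ and uses the known $7$-dimensional orientation result (c) as input. It does not reduce $8$-dimensional orientability to $7$-dimensional orientability, and indeed case (d) is logically independent of the machinery developed here.
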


These are proved using a wide variety of techniques from algebraic and differential topology and index theory. The difficulty of the proofs generally increases with the dimension $n$. The extra data $\scr D$ needed to define (n-)orientations can be subtle, for example in case (c) it includes a {\it flag structure}, a curious algebro-topological structure on 7-manifolds introduced in \cite[\S 3.1]{Joyc3}. See \S\ref{os53} below.

\subsection{Applications to orienting gauge theory moduli spaces}
\label{os28}

In gauge theory one studies moduli spaces $\M_P^{\rm ga}$ of (irreducible) connections $\nabla_P$ on a principal bundle $P\ra X$ (perhaps plus some extra data, such as a Higgs field) satisfying a curvature condition. Under suitable genericity conditions, these moduli spaces $\M_P^{\rm ga}$ will be smooth manifolds, and the ideas of \cite{JTU} can often be used to prove $\M_P^{\rm ga}$ is orientable, and construct a canonical orientation on $\M_P^{\rm ga}$. These orientations are important in defining enumerative invariants such as Casson invariants, Donaldson invariants, and Seiberg--Witten invariants. 
We illustrate this with the example of instantons on 4-manifolds,~\cite{DoKr}:

\begin{ex}
\label{os2ex4}	
Let $(X,g)$ be a compact, oriented Riemannian 4-manifold, and $G$ a Lie group (e.g.\ $G=\SU(2)$), and $P\ra X$ a principal $G$-bundle. For each connection $\nabla_P$ on $P$, the curvature $F^{\nabla_P}$ is a section of $\Ad(P)\ot\La^2T^*X$. We have $\La^2T^*X=\La^2_+T^*X\op\La^2_-T^*X$, where $\La^2_\pm T^*X$ are the subbundles of 2-forms $\al$ on $X$ with $*\al=\pm\al$. Thus $F^{\nabla_P}=F^{\nabla_P}_+\op F^{\nabla_P}_-$, with $F^{\nabla_P}_\pm$ the component in $\Ad(P)\ot\La^2_\pm T^*X$. We call $(P,\nabla_P)$ an ({\it anti-self-dual\/}) {\it instanton\/} if $F^{\nabla_P}_+=0$.

Write $\M_P^{\rm asd}$ for the moduli space of gauge isomorphism classes $[\nabla_P]$ of irreducible instanton connections $\nabla_P$ on $P$, modulo $\G_P/Z(G)$. The deformation theory of $[\nabla_P]$ in $\M_P^{\rm asd}$ is governed by the Atiyah--Hitchin--Singer complex \cite{AHS}: 
\e
\begin{gathered} 
\xymatrix@C=6pt@R=4pt{ 0 \ar[rr] && 
\Ga^{\iy} ( \Ad(P) \ot \La^0T^*X ) \ar[rrr]^{\d^{\nabla_P}} &&& 
\Ga^{\iy} ( \Ad(P) \ot \La^1T^*X  ) \\
&& {\qquad\qquad\qquad} \ar[rrr]^{\d^{\nabla_P}_+} &&&
\Ga^{\iy} ( \Ad(P) \ot \La^2_+T^*X  ) \ar[rr] && 0, } 
\end{gathered}
\label{os2eq23}
\e
where $\d^{\nabla_P}_+\ci\d^{\nabla_P}=0$ as $F^{\nabla_P}_+=0$. Write $\cH^0,\cH^1,\cH^2_+$ for the cohomology groups of \eq{os2eq23}. Then $\cH^0$ is the Lie algebra of $\Aut(\nabla_P)$, so $\cH^0=Z(\g)$, the Lie algebra of the centre $Z(G)$ of $G$, as $\nabla_P$ is irreducible. Also $\cH^1$ is the Zariski tangent space of $\M_P^{\rm asd}$ at $[\nabla_P]$, and $\cH^2_+$ is the obstruction space. If $g$ is generic then as in \cite[\S 4.3]{DoKr}, for non-flat connections $\cH^2_+=0$ for all $[\nabla_P]\in\M_P^{\rm asd}$, and $\M_P^{\rm asd}$ is a smooth manifold, with tangent space $T_{[\nabla_P]}\M_P^{\rm asd}=\cH^1$. Note that $\M_P^{\rm asd}\subset\ovB_P$ is a subspace of the topological stack $\ovB_P$ from Definition~\ref{os2def1}.

Take $E_\bu$ to be the elliptic operator on $X$
\e
D=\d+\d_+^*:\Ga^\iy(\La^0T^*X\op\La^2_+T^*X)\longra\Ga^\iy(\La^1T^*X).	
\label{os2eq24}
\e
Turning the complex \eq{os2eq23} into a single elliptic operator as in Remark \ref{os2rem1}(ii) yields the twisted operator $D^{\nabla_{\Ad(P)}}$ from \eq{os2eq3}. Hence the line bundle $\bar L^{E_\bu}_P\ra\ovB_P$ in Definition \ref{os2def3} has fibre at $[\nabla_P]$ the determinant line of \eq{os2eq23}, which (after choosing an isomorphism $\det_\R Z(\g)\cong\R$) is $\det_\R(\cH^1)^*=\det_\R T^*_{[\nabla_P]}\M_P^{\rm asd}$. It follows that $\bar O_P^{E_\bu}\vert_{\M_P^{\rm asd}}$ is the orientation bundle of the manifold $\M_P^{\rm asd}$.

Thus an orientation on $\ovB_P$ in Definition \ref{os2def3} restricts to an orientation on the manifold $\M_P^{\rm asd}$ in the usual sense of differential geometry. As in \cite{CGJ,JTU,JoUp1}, we can often use differential and algebraic topology techniques to construct orientations on $\ovB_P$, and hence induce orientations on $\M_P^{\rm asd}$. This is a very useful way of defining orientations on $\M_P^{\rm asd}$, first used by Donaldson~\cite{Dona1,Dona2,DoKr}.
\end{ex}

There are several other important classes of gauge-theoretic moduli spaces $\M_P^{\rm ga}$ which have elliptic deformation theory, and so are generically smooth manifolds, for which orientations can be defined by pullback from $\ovB_P$. Examples are given in \cite[\S 4]{JTU}, \cite[Cor.~1.4]{JoUp1}, and~\cite[\S 1.3]{CGJ}. 

\begin{rem} If we omit the genericness/transversality conditions, gauge theory moduli spaces $\M_P^{\rm ga}$ are generally not smooth manifolds. However, as long as their deformation theory is given by an elliptic complex similar to \eq{os2eq23} whose cohomology is constant except at the second and third terms, $\M_P^{\rm ga}$ will still be a {\it derived smooth manifold\/} ({\it d-manifold}, or {\it m-Kuranishi space\/}) in the sense of Joyce \cite{Joyc1,Joyc4,Joyc5,Joyc6}. Orientations for derived manifolds are defined and well behaved, and we can define orientations on $\M_P^{\rm ga}$ by pullback of orientations on $\ovB_P$ exactly as in the case when $\M_P^{\rm ga}$ is a manifold.
\label{os2rem3}
\end{rem}

\subsection{Applications in complex (derived) algebraic geometry}
\label{os29}

Cao, Gross, and Joyce \cite{CGJ} apply orientations on moduli spaces $\B_P$ to solve an orientation problem in complex algebraic geometry. Suppose $X$ is a Calabi--Yau $m$-fold over $\C$, and write $\bs\M$ for the derived moduli $\C$-stack of complexes of coherent sheaves on $X$, in the sense of To\"en, Vaqui\'e, and Vezzosi \cite{ToVa,ToVe1,ToVe2}, and $\bs\M_{\rm vect}\subset\bs\M_{\rm coh}\subset\bs\M$ for the open substacks of algebraic vector bundles, and coherent sheaves. Then $\bs\M$ has a $(2-m)$-shifted symplectic structure $\om$ in the sense of Pantev, To\"en, Vaqui\'e, and Vezzosi~\cite{PTVV}. 

Borisov and Joyce \cite[\S 2.4]{BoJo} define a notion of orientation for such $(\bs\M,\om)$ for even $m$. When $m=4$, they construct virtual cycles for proper, oriented $-2$-shifted symplectic derived $\C$-schemes, and propose to use these to define Donaldson--Thomas type `DT4 invariants' counting derived moduli schemes $\bs\M_\al^{\rm ss}(\tau)$ of $\tau$-semistable coherent sheaves on Calabi--Yau 4-folds. An important ingredient in this programme is a choice of orientation on~$(\bs\M_\al^{\rm ss}(\tau),\om)$.

Let $X$ be a Calabi--Yau $m$-fold for $m=4k$. Very roughly, Cao, Gross, and Joyce \cite{CGJ} define a continuous map $\Phi:\bs\M_{\rm vect}^{\rm top}\ra \coprod_{\text{$\U(n)$-bundles $P\ra X$}}\B_P^{\rm top}$, where $\bs\M_{\rm vect}^{\rm top},\B_P^{\rm top}$ are the `topological realizations' of $\bs\M_{\rm vect},\B_P$, such that the orientation bundles $O^{E_\bu}_P\ra\B_P$ of \S\ref{os22} for $E_\bu$ the positive Dirac operator $\slashed{D}_+$ on $X$ pull back under $\Phi$ to the principal $\Z_2$-bundle of Borisov--Joyce orientations on $(\bs\M_{\rm vect},\om)$. Hence orientations on $\B_P$ for all principal $\U(n)$-bundles $P\ra X$ induce orientations on $(\bs\M_{\rm vect},\om)$. For Calabi--Yau 4-folds, they show that all $\B_P$ are orientable, so $(\bs\M_{\rm vect},\om)$ is orientable, and then they deduce that $(\bs\M_{\rm coh},\om)$ and $(\bs\M,\om)$ are orientable by a `group completion' argument.

In general, one should expect that `orientations' on complex algebraic moduli spaces of vector bundles, coherent sheaves or complexes, or Higgs bundles, on a smooth projective $\C$-scheme $X$, can be pulled back from orientations on $\B_P$ for principal $\U(n)$-bundles $P\ra X$. This is useful as orientations on $\B_P$ are much easier to understand than orientations on algebro-geometric moduli spaces.

\section{Spin structures on moduli spaces}
\label{os3}

In \S\ref{os22}--\S\ref{os29}, given a real elliptic operator $E_\bu$ on $X$, we defined and discussed `orientations' on moduli spaces $\A_P,\B_P,\ovB_P$ for principal $G$-bundles $P\ra X$. Now, given a complex elliptic operator $F_\bu$ on $X$, we will define and discuss `spin structures' on $\A_P,\B_P,\ovB_P$. So far as the authors know this is a new idea, though as we explain in \S\ref{os33}, it is connected to the notion of `orientation data' in Donaldson--Thomas theory of Calabi--Yau 3-folds \cite[\S 5.2]{KoSo1}. Our main results in \S\ref{os4} relate spin structures on $\B_P$ for principal $G$-bundles $P\ra X$ to orientations on $\B_Q$ for principal $G$-bundles $Q\ra X\t\cS^1$ with $Q\vert_{X\t\{\text{point}\}}\cong P$.

\subsection{Complex elliptic operators and spin structures}
\label{os31}

We discuss complex elliptic operators, generalizing Definition \ref{os2def2}.

\begin{dfn}
\label{os3def1}
Let $X$ be a compact manifold. Suppose we are given complex vector bundles $F_0,F_1\ra X$, of the same rank $r$, and a complex linear elliptic partial differential operator $D:\Ga^\iy(F_0)\ra\Ga^\iy(F_1)$, of degree $d$. As a shorthand we write $F_\bu=(F_0,F_1,D)$. We may write $F_\bu$ in terms of connections as in \eq{os2eq1}.

Now suppose we are given Hermitian metrics $h_{F_0},h_{F_1}$ (that is, Euclidean metrics on $F_0,F_1$ compatible with the complex structures) on the fibres of $F_0,F_1$, and a volume form $\d V$ on $X$. Then as in Definition \ref{os2def2} there is a unique adjoint operator $D^*:\Ga^\iy(F_1)\ra\Ga^\iy(F_0)$ satisfying \eq{os2eq2}. It is complex anti-linear in $D$. We call $D$ {\it Hermitian self-adjoint\/} if $F_0=F_1$, $h_{F_0}=h_{F_1}$, and $D^*=D$. This is the obvious notion of self-adjointness for complex elliptic operators.

However, later we will need a different notion of self-adjointness. Write $\bar F_0,\bar F_1$ for the complex conjugate vector bundles of $F_0,F_1$ (the same real vector bundles, but the complex structures change sign), and $\bar D:\Ga^\iy(\bar F_0)\ra\Ga^\iy(\bar F_1)$ for the complex conjugate operator (as real vector spaces and operators $\Ga^\iy(\bar F_j)=\Ga^\iy(F_j)$ and $\bar D=D$). Then we call $D$ {\it antilinear self-adjoint\/} if $F_0=\bar F_1$, and $h_{F_0}=h_{F_1}$, and $D^*=\bar D$. For example, if $(X,g)$ is a spin Riemannian manifold of dimension $8n+6$ then the positive Dirac operator $\slashed{D}_+:\Ga^\iy(S_+)\ra \Ga^\iy(S_-)$ is antilinear self-adjoint.
\end{dfn}

Here is one of the central definitions of this paper, which may be new.

\begin{dfn} 
\label{os3def2}
Suppose $X,G,P,\A_P,\B_P,\ovB_P$ are as Definition \ref{os2def1}, and $F_\bu$ is a complex elliptic operator on $X$ as in Definition \ref{os3def1}. Let $\nabla_P\in\A_P$. Then $\nabla_P$ induces a connection $\nabla_{\Ad(P)}$ on the vector bundle $\Ad(P)\ra X$. Thus we may form the twisted complex elliptic operator
\e
\begin{split}
D^{\nabla_{\Ad(P)}}&:\Ga^\iy(\Ad(P)\ot_\R F_0)\longra\Ga^\iy(\Ad(P)\ot_\R F_1),\\
D^{\nabla_{\Ad(P)}}&:f\longmapsto \ts\sum_{i=0}^d (\id_{\Ad(P)}\ot a_i)\cdot \nabla_{\Ad(P)\ot F_0}^if,
\end{split}
\label{os3eq1}
\e
where $\nabla_{\Ad(P)\ot F_0}$ are the connections on $\Ad(P)\ot F_0\ot\bigot^iT^*X$ for $0\le i<d$ induced by $\nabla_{\Ad(P)}$ and~$\nabla_{F_0}$.

Since $D^{\nabla_{\Ad(P)}}$ is a complex linear elliptic operator on a compact manifold $X$, it has finite-dimensional kernel $\Ker(D^{\nabla_{\Ad(P)}})$ and cokernel $\Coker(D^{\nabla_{\Ad(P)}})$. The {\it determinant\/} $\det_\C(D^{\nabla_{\Ad(P)}})$ is the 1-dimensional complex vector space
\e
\det_\C(D^{\nabla_{\Ad(P)}})=\det_\C\Ker(D^{\nabla_{\Ad(P)}})\ot\bigl(\det_\C\Coker(D^{\nabla_{\Ad(P)}})\bigr)^*,
\label{os3eq2}
\e
where if $V$ is a finite-dimensional complex vector space then $\det_\C V=\La_\C^{\dim_\C V}V$.

These operators $D^{\nabla_{\Ad(P)}}$ vary continuously with $\nabla_P\in\A_P$, so they form a family of elliptic operators over the base topological space $\A_P$. Thus as in \cite{AtSi4} there is a natural complex line bundle $\hat K{}^{F_\bu}_P\ra\A_P$ with fibre $\hat K{}^{F_\bu}_P\vert_{\nabla_P}=\det_\C(D^{\nabla_{\Ad(P)}})$ at each $\nabla_P\in\A_P$. It is equivariant under the actions of $\G_P$ and $\G_P/Z(G)$ on $\A_P$, and so pushes down to complex line bundles $K^{F_\bu}_P\ra\B_P$, $\bar K^{F_\bu}_P\ra\ovB_P$ on the topological stacks $\B_P,\ovB_P$, with $K^{F_\bu}_P\cong\Pi_P^*(\bar K_P^{F_\bu})$. We call $K^{F_\bu}_P,\bar K^{F_\bu}_P$ the {\it determinant line bundles\/} of $\B_P,\ovB_P$. The restriction $\bar K^{F_\bu}_P\vert_{\ovB_P^\irr}$ is a topological complex line bundle in the usual sense on the topological space~$\ovB_P^\irr$.

A {\it spin structure\/} $\si_P^{F_\bu}$ on $\B_P$ is a choice of square root line bundle $(K^{F_\bu}_P)^{1/2}$ for $K^{F_\bu}_P\ra\B_P$, up to isomorphism. That is, $\si_P^{F_\bu}=[L,\io]$ is an equivalence class of pairs $(L,\io)$, where $L\ra\B_P$ is a topological complex line bundle on $\B_P$, and $\io:L^{\ot^2}\ra K^{F_\bu}_P$ is an isomorphism, and pairs $(L,\io),(L',\io')$ are equivalent if there exists an isomorphism $\jmath:L\ra L'$ with~$\io'\ci(\jmath\ot\jmath)=\io:L^{\ot^2}\ra K^{F_\bu}_P$.

We can similarly define a {\it spin structure on\/} $\A_P$ to be a $\G_P$-equivariant square root $(\hat K^{F_\bu}_P)^{1/2}$ for $\hat K^{F_\bu}_P\ra\A_P$, up to $\G_P$-equivariant isomorphism, and a {\it spin structure on\/} $\ovB_P$ to be a square root $(\bar K^{F_\bu}_P)^{1/2}$ for $\bar K^{F_\bu}_P\ra\ovB_P$, up to isomorphism. Then spin structures on $\A_P$ and $\B_P$ are equivalent.

In the general case, spin structures on $\A_P,\B_P$ and $\ovB_P$ are {\it not\/} quite equivalent. Under the morphism $\Pi_P:\B_P\ra\ovB_P$, spin structures $(\bar K^{F_\bu}_P)^{1/2}$ on $\ovB_P$ pull back to spin structures $(K^{F_\bu}_P)^{1/2}=\Pi_P^*((\bar K^{F_\bu}_P)^{1/2})$ on $\B_P$, and this gives an injective map from spin structures on $\ovB_P$ to those on $\B_P$. A spin structure $(K^{F_\bu}_P)^{1/2}$ lies in the image of this map if and only if for any (equivalently, for all) $[\nabla_P]$ in $\B_P$, the centre
$Z(G)\subseteq\Stab_{\G_P}(\nabla_P)=\Iso_{\B_P}([\nabla_P])$ acts trivially on the fibre $(K^{F_\bu}_P)^{1/2}\vert_{[\nabla_P]}$. This happens automatically if all group morphisms $Z(G)\ra\{\pm 1\}$ are the identity, and then spin structures on $\A_P,\B_P,\ovB_P$ agree.
\end{dfn}

\begin{rem}
\label{os3rem2}
Here is why we chose the term `spin structure'. Suppose $Y$ is a complex $m$-manifold. The {\it canonical bundle\/} $K_Y=\La_\C^mT^*Y$ is a complex line bundle $K_Y\ra Y$. Then spin structures on $Y$ are in natural 1--1 correspondence with isomorphism classes of square roots $K_Y^{1/2}$, see~\cite[App.~D]{LaMi}. We can think of $\A_P,\B_P,\ovB_P$ as infinite-dimensional complex manifolds, and $\hat K{}^{F_\bu}_P,K^{F_\bu}_P,\bar K^{F_\bu}_P$ as their canonical bundles. We will see in \S\ref{os32} that spin structures on $\ovB_P$ are related to spin structures in the usual sense on smooth gauge theory moduli spaces.
\end{rem}

\subsection{Applications to gauge theory moduli spaces}
\label{os32}

In \S\ref{os28} we explained that for many interesting gauge theory moduli spaces $\M_P^{\rm ga}$ of instanton-type connections on a principal bundle $P\ra X$, the deformation theory of $\M_P^{\rm ga}$ is controlled by a real elliptic operator $E_\bu$ on $X$, and if $\M_P^{\rm ga}$ is smooth then orientations on $\ovB_P$ restrict to orientations on $\M_P^{\rm ga}$ under the inclusion $\M_P^{\rm ga}\hookra\ovB_P$. In some cases $E_\bu$ is actually the real operator underlying a complex elliptic operator $F_\bu$. Then $\M_P^{\rm ga}$ may be a complex manifold (or almost complex manifold) whose canonical bundle $K_{\M_P^{\rm ga}}$ is the restriction of $\bar K^{F_\bu}_P\ra\ovB_P$. Hence a spin structure for $\ovB_P$, that is, a square root for $\bar K^{F_\bu}_P$, restricts to a square root for $K_{\M_P^{\rm ga}}$, which as in Remark \ref{os3rem2} is equivalent to a spin structure on the manifold $\M_P^{\rm ga}$ in the usual sense.

\begin{ex}
\label{os3ex1}
In Example \ref{os2ex4}, suppose the oriented Riemannian 4-manifold $(X,g)$ is a K\"ahler surface. Then the elliptic operator $E_\bu$ in \eq{os2eq24} may be rewritten as a complex elliptic operator $F_\bu$ given by
\e
D=\db+\db^*:\Ga^\iy(\La^{0,0}T^*X\op\La^{0,2}T^*X)\longra\Ga^\iy(\La^{0,1}T^*X).	
\label{os3eq3}
\e
If an instanton moduli space $\M_P^{\rm asd}$ is unobstructed, it is a complex manifold, with complex tangent space $\Coker D^{\nabla_{\Ad(P)}}$ at each $[\nabla_P]\in\M_P^{\rm asd}$, and its canonical bundle $K_{\M_P^{\rm asd}}$ is the restriction of the complex line bundle $\bar K^{F_\bu}_P\ra\ovB_P$ under the inclusion $\M_P^{\rm asd}\hookra\ovB_P$. Hence a spin structure for $\ovB_P$ restricts to a spin structure on the manifold $\M_P^{\rm asd}$.
\end{ex}

As in Remark \ref{os2rem3}, in enumerative invariant problems such as Donaldson theory and Seiberg--Witten theory, even if gauge theory moduli spaces $\M_P^{\rm ga}$ are not smooth, one hopes to make them into compact, oriented derived manifolds, which have bordism classes $[\M_P^{\rm ga}]_{\rm bord}$. If the gauge theory problem comes from a complex elliptic operator $F_\bu$ (even up to deformation), we should lift $[\M_P^{\rm ga}]_{\rm bord}$ to a unitary bordism class. Given a spin structure on $\ovB_P$, we should be able to lift this to a spin (unitary) bordism class. So considering spin structures as in \S\ref{os31} might lead to refined versions of enumerative invariants. 

See Sasahira \cite{Sasa} for a refined version of Seiberg--Witten invariants involving spin bordism classes $[\M^{\rm SW}]_{\rm bord}$ of Seiberg--Witten moduli spaces $\M^{\rm SW}$, with an essentially arbitrary choice of spin structure on $\M^{\rm SW}$. We expect that our methods could be used to choose the spin structure canonically.

\subsection{Applications in complex (derived) algebraic geometry}
\label{os33}

As in \S\ref{os29}, by Pantev, To\"en, Vaqui\'e, and Vezzosi \cite{PTVV,ToVa,ToVe1,ToVe2}, if $X$ is a Calabi--Yau $m$-fold over $\C$ and $\bs\M$ is the derived moduli stack of complexes of coherent sheaves on $X$, then $\bs\M$ has a $(2-m)$-shifted symplectic structure $\om$. When $m$ is even, Borisov and Joyce \cite[\S 2.4]{BoJo} define `orientations' on $(\bs\M,\om)$. When $m=4k$ these are related to orientations on $\B_P$ for principal $\U(n)$-bundles $P\ra X$, and when $m=4$ they are important in defining DT4 invariants~\cite{BoJo}.

It turns out that there is a parallel story for Calabi--Yau $m$-folds when $m$ is odd, related to spin structures on $\B_P$ as in \S\ref{os32}, though the usual term used in the literature is `orientation data' rather than `spin structure'.

In 2008, Kontsevich and Soibelman \cite[\S 5]{KoSo1} introduced the notion of `orientation data' on an odd Calabi--Yau category $\cC$, such as $\coh(X)$ or $D^b\coh(X)$ for $X$ a Calabi--Yau $m$-fold with $m$ odd. Oversimplifying a bit, if $\bs\M$ is the derived moduli stack of objects in $\cC$ (which is $(2-m)$-shifted symplectic), $\bL_{\bs\M}$ is its cotangent complex, and $K_{\bs\M}=\det(\bL_{\bs\M})\ra\bs\M$ its determinant line bundle, then orientation data is a choice of square root $K_{\bs\M}^{1/2}$ satisfying some compatibility conditions under direct sums in $\cC$, where the compatibility conditions need the odd Calabi--Yau assumption to state. Nekrasov and Okounkov \cite[\S 6]{NeOk} gave a simple argument showing that square roots $K_{\bs\M}^{1/2}$ must exist for odd Calabi--Yau categories $\cC$, but whether there is a canonical choice, and whether we can choose $K_{\bs\M}^{1/2}$ to satisfy the compatibility conditions, appears to be unknown.

Kontsevich and Soibelman \cite{KoSo1} needed orientation data for their motivic Donaldson--Thomas invariants of Calabi--Yau 3-folds (see also \cite{BJM}). Later, orientation data was found to be necessary in other generalizations of Donaldson--Thomas theory for 3-Calabi--Yau categories, including Kontsevich and Soibelman's Cohomological Hall Algebras \cite{KoSo2}, and categorification of Donaldson--Thomas theory using perverse sheaves by Ben-Bassat, Brav, Bussi, Dupont, Joyce, and Szendr\H oi~\cite{BBJ,BBDJS,Joyc2}. 

In the sequel \cite{JoUp2}, using a similar argument to Cao, Gross, and Joyce \cite{CGJ} for Calabi--Yau $4k$-folds, we will show that if $X$ is a Calabi--Yau $m$-fold for $m=4k+3$, then orientation data on $\bs\M$ can be pulled back from a differential-geometric notion of orientation data on $X$, involving choices of spin structures $\si_P^{F_\bu}$ on $\B_P$ for all principal $\U(m)$-bundles $P\ra X$, satisfying compatibility conditions, with $F_\bu$ the positive Dirac operator on $X$. When $m=3$, we show there is a canonical choice of such differential-geometric orientation data, and hence construct canonical orientation data on $D^b\coh(X)$ in the sense of \cite{KoSo1,KoSo2}. This was the authors' motivation for writing this paper.

In the situation of \S\ref{os31}, for a complex elliptic operator $F_\bu$ on $X$ such as $\db+\db^*:\Ga^\iy(\La^{0,\rm even}T^*X)\ra \Ga^\iy(\La^{0,\rm odd}T^*X)$ as in \eq{os3eq3}, the analogue of the `$4k$-Calabi--Yau' condition is that $F_\bu\cong\bar F_\bu$, that is, $F_\bu=E_\bu\ot_\R\C$ for $E_\bu$ a real elliptic operator on $X$, and the analogue of the `$(4k+3)$-Calabi--Yau' condition is that $F^*_\bu\cong\bar F_\bu$, that is, $F_\bu$ is antilinear self-adjoint as in Definition \ref{os3def1}. It is significant that much of this paper needs $F_\bu$ to be antilinear self-adjoint.

\section{Elementary results on spin structures}
\label{os4}

We now develop some basic theory for spin structures on moduli spaces $\B_P$. Sections \ref{os42}--\ref{os45} are analogues of \S\ref{os23}--\S\ref{os26} for (n-)orientations.

\subsection{\texorpdfstring{Square roots of line bundles on products $X\t Y$}{Square roots of line bundles on products X x Y}}
\label{os41}

\begin{prop}
\label{os4prop1}
Suppose\/ $X,Y,Z$ are nonempty topological spaces or topological stacks with\/ $X,Y$ connected,\/ $I\ra X,$\/ $J\ra Y,$\/ $K\ra Z$ and\/ $L\ra X\t Y$ are topological complex line bundles,\/ $\Phi:X\t Y\ra Z$ is continuous, and\/ $\phi:(I\bt J)\ot L^{\ot^2}\ra \Phi^*(K)$ is an isomorphism of line bundles on\/ $X\t Y,$ where\/ {\rm`$\bt$'} is the external tensor product. Then for each square root\/ $K^{1/2}$ of\/ $K,$ there exist square roots\/ $I^{1/2},J^{1/2}$ of\/ $I,J,$ unique up to isomorphism, with an isomorphism\/ $\phi^{1/2}:(I^{1/2}\bt J^{1/2})\ot L\ra \Phi^*(K^{1/2})$ on\/ $X\t Y$ such that\/~$(\phi^{1/2})^{\ot^2}=\phi$.
\end{prop}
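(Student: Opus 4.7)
The plan is to reduce the proposition to a single factorization claim for a line bundle on $X\t Y$. Setting $M:=\Phi^*(K^{1/2})\ot L^{-1}$, the hypothesis $\phi$ together with the square-root structure $(K^{1/2})^{\ot^2}\cong K$ yields a canonical isomorphism $M^{\ot^2}\cong I\bt J$. The proposition then amounts to showing that any such $M$ is isomorphic to $I^{1/2}\bt J^{1/2}$ for some square roots $I^{1/2},J^{1/2}$ of $I,J$, with these square roots unique up to isomorphism. The desired $\phi^{1/2}$ is then obtained by composing the tautological identification $M\ot L=\Phi^*(K^{1/2})$ with the external factorization $M\cong I^{1/2}\bt J^{1/2}$.

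For the construction I would pick basepoints $x_0\in X$, $y_0\in Y$ (possible since $X,Y$ are nonempty) and $1$-dimensional complex vector spaces $V,W$ with $V^{\ot^2}\cong I|_{x_0}$ and $W^{\ot^2}\cong J|_{y_0}$, normalized via the sign ambiguity of a square root so that $V\ot W\cong M|_{(x_0,y_0)}$. Define $I^{1/2}:=M|_{X\t\{y_0\}}\ot W^{-1}$ and $J^{1/2}:=M|_{\{x_0\}\t Y}\ot V^{-1}$; restricting $M^{\ot^2}\cong I\bt J$ to the two axes shows these are square roots of $I$ and $J$. Let $Q:=M\ot (I^{1/2}\bt J^{1/2})^{-1}$. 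A short calculation using the normalization of $V,W$ shows $Q^{\ot^2}$ is trivial and that $Q$ restricts trivially to both $X\t\{y_0\}$ and $\{x_0\}\t Y$. Hence $c_1(Q)\in H^2(X\t Y;\Z)$ is $2$-torsion and lies in the kernel of both axis restriction maps, i.e.\ in $\ti H^2(X\w Y;\Z)$.

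The decisive topological fact is that $\ti H^2(X\w Y;\Z)$ is torsion-free. By K\"unneth this group equals $\ti H^1(X;\Z)\ot_\Z\ti H^1(Y;\Z)$ (the Tor terms vanish because $\ti H^1$ is always torsion-free by universal coefficients), and a tensor product of torsion-free abelian groups is torsion-free. Thus $c_1(Q)=0$, $Q$ is trivial, and $M\cong I^{1/2}\bt J^{1/2}$. For uniqueness, if $((I')^{1/2},(J')^{1/2})$ is another solution, then $A:=I^{1/2}\ot((I')^{1/2})^{-1}$ and $B:=J^{1/2}\ot((J')^{1/2})^{-1}$ have trivial squares and $A\bt B$ is trivial; restricting to the axes forces $A,B$ to be constant line bundles, and a constant line bundle with trivial square is itself trivial. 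The main obstacle I anticipate is carrying the K\"unneth/torsion-freeness step through in the topological-stack setting, where $H^2$, $\ti H^1$, and K\"unneth must be interpreted via classifying spaces or an appropriate topos cohomology; and matching $(\phi^{1/2})^{\ot^2}=\phi$ on the nose up to the equivalence of pairs $(L,\io)$, with any remaining $\pm 1$ discrepancy absorbed because the complex automorphism $j=i\cdot\id$ of $I^{1/2}$ satisfies $j\ot j=-\id$.
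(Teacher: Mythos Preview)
Your overall strategy---reduce to showing that $M=\Phi^*(K^{1/2})\ot L^{-1}$ factors as $I^{1/2}\bt J^{1/2}$, construct the candidate square roots by restricting $M$ to the two coordinate axes, and then prove the error bundle $Q=M\ot(I^{1/2}\bt J^{1/2})^{-1}$ is trivial---is exactly the paper's. The uniqueness argument is also the same in spirit.

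The gap is in the step where you show $Q$ is trivial. Your route via $c_1(Q)\in H^2(X\t Y;\Z)$ and K\"unneth for the smash product requires the integral K\"unneth theorem for cohomology, which needs a finiteness hypothesis (typically that one factor has finitely generated cohomology in each degree) not present in the proposition; you flag exactly this as your anticipated obstacle, and it is already an issue for ordinary spaces, not only for stacks. The paper sidesteps this entirely by working one categorical level down: since $Q$ is the ratio of two square roots of the same line bundle, it is a principal $\Z_2$-bundle, hence (as $X\t Y$ is connected) classified by a group morphism $q:\pi_1(X\t Y)\ra\Z_2$. The product formula $\pi_1(X\t Y)\cong\pi_1(X)\t\pi_1(Y)$ holds without finiteness assumptions, including for topological stacks in Noohi's sense, and your axis restrictions $Q\vert_{X\t\{y_0\}}$, $Q\vert_{\{x_0\}\t Y}$ being trivial force $q$ to vanish on each factor, hence $q\equiv 1$ and $Q$ is trivial. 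Equivalently, the relevant invariant lives in $H^1(X\t Y;\Z_2)$ rather than in $H^2(X\t Y;\Z)$, and there the splitting $H^1(X\t Y;\Z_2)\cong H^1(X;\Z_2)\op H^1(Y;\Z_2)$ is just the identity $\Hom(\pi_1(X)\t\pi_1(Y),\Z_2)\cong\Hom(\pi_1(X),\Z_2)\op\Hom(\pi_1(Y),\Z_2)$, needing no K\"unneth theorem at all.
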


\begin{proof} In the situation of the proposition, fix a square root $K^{1/2}$ for $K$. Let $x\in X$ and $y\in Y$, and choose square roots $I\vert_x^{1/2},J\vert_y^{1/2}$ for the 1-dimensional complex vector spaces $I\vert_x,J\vert_y$. Define line bundles $I^{1/2}\ra X$, $J^{1/2}\ra Y$ by
\e
\begin{split}
I^{1/2}&=\bigl(\Phi^*(K^{1/2})\ot L^*\bigr)\vert_{X\t\{y\}}\ot (J\vert_y^{1/2})^*,\\
J^{1/2}&=\bigl(\Phi^*(K^{1/2})\ot L^*\bigr)\vert_{\{x\}\t Y}\ot (I\vert_x^{1/2})^*,
\end{split}
\label{os4eq1}
\e
identifying $X\cong X\t\{y\},$ $Y\cong\{x\}\t Y$. Then we have canonical isomorphisms
\begin{align*}
I^{1/2}\ot I^{1/2}&\cong \bigl(\Phi^*(K)\ot (L^*)^{\ot^2}\bigr)\vert_{X\t\{y\}}\ot (J\vert_y)^*\\
&\,\,{\buildrel\phi\over\cong}\, \bigl((I\bt J)\ot L^{\ot^2}\ot (L^*)^{\ot^2}\bigr)\vert_{X\t\{y\}}\ot (J\vert_y)^*\cong I,
\end{align*}
so $I^{1/2}$ is a square root of $I$, and similarly $J^{1/2}$ is a square root of $J$. By restricting $\phi^{1/2}:(I^{1/2}\bt J^{1/2})\ot L\ra \Phi^*(K^{1/2})$ to $X\t\{y\},$ $Y\cong\{x\}$ we see that any $I^{1/2},J^{1/2}$ satisfying the conditions of the proposition must be isomorphic to those in \eq{os4eq1}. Hence $I^{1/2},J^{1/2}$ are unique up to isomorphism if they exist.

Now $I^{1/2}\bt J^{1/2}$ and $\Phi^*(K^{1/2})\ot L^*$ are square roots of $I\bt J$ on $X\t Y$, and by \eq{os4eq1} we have isomorphisms compatible with the square roots
\e
\begin{split}
\bigl(I^{1/2}\bt J^{1/2}\bigr)\vert_{X\t\{y\}}&\cong\bigl(\Phi^*(K^{1/2})\ot L^*\bigr)\vert_{X\t\{y\}},\\
\bigl(I^{1/2}\bt J^{1/2}\bigr)\vert_{\{x\}\t Y}&\cong\bigl(\Phi^*(K^{1/2})\ot L^*\bigr)\vert_{\{x\}\t Y}.\end{split}
\label{os4eq2}
\e
Two square roots of the same line bundle on $X\t Y$ differ by a principal $\Z_2$-bundle $Q\ra X\t Y$. Since $X,Y$ are connected, by taking monodromy $Q$ is equivalent to a group morphism $q:\pi_1(X\t Y)\ra\Z_2=\{\pm 1\}$, where $\pi_1(X\t Y)\cong\pi_1(X)\t\pi_1(Y)$. Equation \eq{os4eq2} implies that $q\vert_{\pi_1(X)\t\{1\}}\equiv 1$ and $q\vert_{\{1\}\t\pi_1(Y)}\equiv 1$, so $q\equiv 1$, and $Q$ is trivial, which forces $I^{1/2}\bt J^{1/2}\cong\Phi^*(K^{1/2})\ot L^*$ as square roots of $I\bt J$. The proposition follows.
\end{proof}

\begin{rem}
\label{os4rem1}
We will apply this as follows: suppose in the situation of \S\ref{os31} we have moduli spaces $\B_P,\B_Q,\B_R$, and we construct a morphism $\Phi:\B_P\t\B_Q\ra\B_R$ and an isomorphism for some line bundle~$L\ra\B_P\t\B_Q$:
\begin{equation*}
\phi:(K^{F_\bu}_P\bt K^{F_\bu}_Q)\ot L^{\ot^2}\ra \Phi^*(K^{F_\bu}_R).
\end{equation*}
Then Proposition \ref{os4prop1} implies that a spin structure $(K^{F_\bu}_R)^{1/2}$ for $\B_R$ determines unique spin structures $(K^{F_\bu}_P)^{1/2},(K^{F_\bu}_Q)^{1/2}$ for $\B_P,\B_Q$.

Note that this is stronger than the analogous fact for orientations: if in the situation of \S\ref{os2} we had an analogue of $\phi$ for real line bundles $L^{E_\bu}_P,L^{E_\bu}_Q,L^{E_\bu}_R$, then an orientation for $\B_R$ implies $\B_P,\B_Q$ are orientable, but does not determine unique orientations on $\B_P,\B_Q$. This makes the theory of spin structures simpler than that of orientations in some ways (see Theorem \ref{os5thm1} for instance).
\end{rem}

\subsection{\texorpdfstring{Natural spin structures when $G$ is abelian}{Natural spin structures when G is abelian}}
\label{os42}

In the situation of Definition \ref{os3def2}, suppose the Lie group $G$ is abelian, for example $G=\U(1)$. Then as in \S\ref{os23} the line bundles $\hat K{}^{F_\bu}_P\ra\A_P$, $K^{F_\bu}_P\ra\B_P$, $\bar K^{F_\bu}_P\ra\ovB_P$ are all canonically trivial, with fibre 
\begin{equation*}
(\det_\C D)^{\ot_\C^{\dim_\R\g}}\ot_\R(\La_\R^{\dim_\R\g}\g)^{\ot_\R^{\ind_\C D}}.
\end{equation*}
Hence they have trivial square roots, canonical up to isomorphism, giving natural spin structures on $\A_P,\B_P,\ovB_P$.

\subsection{Spin structures on products of moduli spaces}
\label{os43}

Let $X$ and $F_\bu$ be fixed, and suppose $G,H$ are Lie groups, and $P\ra X$, $Q\ra X$ are principal $G$- and $H$-bundles, so that $P\t_XQ$ is a principal $G\t H$ bundle over $X$. As in \S\ref{os24} we have an isomorphism of topological stacks $\La_{P,Q}:\B_P\t\B_Q\ra\B_{P\t_XQ}$. By the analogue of \eq{os2eq8}--\eq{os2eq9} we have a natural isomorphism of line bundles on~$\B_P\t\B_Q$:
\e
\la_{P,Q}^{F_\bu}:K_P^{F_\bu}\bt K_Q^{F_\bu}\longra
\La_{P,Q}^*(K_{P\t_XQ}^{F_\bu}).
\label{os4eq3}
\e
Since $\B_P,\B_Q$ are connected, we see from Proposition \ref{os4prop1} that \eq{os4eq3} induces a bijection between pairs $\bigl((K_P^{F_\bu})^{1/2},(K_Q^{F_\bu})^{1/2}\bigr)$ of spin structures on $\B_P,\B_Q$, and spin structures $(K_{P\t_XQ}^{F_\bu})^{1/2}$ on $\B_{P\t_XQ}$.

\begin{rem}
\label{os4rem2}
In Remark \ref{os2rem2} we noted that isomorphisms such as \eq{os4eq3} depend on an orientation convention and that exchanging $P$ and $Q$ introduces a sign as in \eq{os2eq11} for n-orientation bundles. Similarly, the sign change for \eq{os4eq3} is given by $(-1)^{\ind_\C D^{\nabla_{\Ad(P)}}\cdot\ind_\C D^{\nabla_{\Ad(Q)}}}$. For spin structures, this issue will not arise, for two reasons. Firstly, a sign change in an isomorphism $K_1\cong K_2$ between complex line bundles $K_1,K_2$ has no effect in the identification between square roots $K_1^{1/2},K_2^{1/2}$. Secondly, our main results concern {\it self-adjoint\/} elliptic operators, which have index zero, so the sign changes are 1 anyway.
\end{rem}

\subsection{\texorpdfstring{Relating moduli spaces for discrete quotients $G\twoheadrightarrow H$}{Relating moduli spaces for discrete quotients G→H}}
\label{os44}

Suppose $G$ is a Lie group, $K\subset G$ a discrete normal subgroup, and set $H=G/K$ for the quotient Lie group. Let $X,F_\bu$ be fixed. If $P\ra X$ is a principal $G$-bundle, then $Q:=P/K$ is a principal $H$-bundle over $X$. As in \S\ref{os25} we have a natural morphism $\De_{P,Q}:\B_P\ra\B_Q$ of topological stacks, and the analogue of \eq{os2eq13} is a canonical isomorphism
\e
\de_{P,Q}^{F_\bu}:K_P^{F_\bu}\,{\buildrel\cong\over\longra}\,\De_{P,Q}^*(K_Q^{F_\bu}).
\label{os4eq4}
\e
Hence spin structures $(K_Q^{F_\bu})^{1/2}$ on $\B_Q$ pull back to spin structures $(K_P^{F_\bu})^{1/2}=\De_{P,Q}^*((K_Q^{F_\bu})^{1/2})$ on $\B_P$. Here is the analogue of Example~\ref{os2ex1}.

\begin{ex} Let $G=\SU(m)\t\U(1)$, $K=\Z_m\subset G$ and $H=G/K\cong\U(m)$ be as in Example \ref{os2ex1}. For fixed $X,F_\bu$, let $P\ra X$ be a principal $\SU(m)$-bundle, and $P'=X\t\U(1)\ra X$ be the trivial principal $\U(1)$-bundle. Write $P''=P\t\U(1)=P\t_XP'\ra X$ for the associated principal $\SU(m)\t\U(1)$-bundle, and define $Q=(P\t\U(1))/\Z_m\cong(P\t\U(m))/\SU(m)$ to be the quotient principal $\U(m)$-bundle. Define $\Ka_{P,Q}:\B_P\ra\B_Q$ as in \eq{os2eq15}. As for \eq{os2eq16}, choose an isomorphism $\io:\C\ra K_{P'}^{F_\bu}\vert_{[\nabla^0]}$, and define an isomorphism of line bundles $\ka_{P,Q}^{F_\bu}:K_P^{F_\bu}\ra \Ka_{P,Q}^*(K_Q^{F_\bu})$ by the commutative diagram
\begin{equation*}
\!\!\!\!\xymatrix@!0@C=160pt@R=45pt{ 
*+[r]{K_P^{F_\bu}} \ar[d]^(0.45){\id_{K_P^{F_\bu}}\ot\io} \ar[rr]_{\ka_{P,Q}^{F_\bu}} && *+[l]{\Ka_{P,Q}^*(K_Q^{F_\bu})} \ar@{=}[d]
\\
*+[r]{\begin{subarray}{l}\ts (\id_{\B_P},[\nabla^0])^* \\ \ts (K_P^{F_\bu}\bt K_{P'}^{F_\bu})\end{subarray}} \ar[r]^{\begin{subarray}{l}(\id_{\B_P},[\nabla^0])^* \\ \;\>(\la_{P,P'}^{F_\bu}) \end{subarray}} &
{\;\>\begin{subarray}{l}\ts (\id_{\B_P},[\nabla^0])^* \\ \ts \ci\La_{P,P'}^*(K_{P''}^{F_\bu})\end{subarray}}  \ar[r]^(0.36){\begin{subarray}{l} (\id_{\B_P},[\nabla^0])^*\ci{} \\ \La_{P,P'}^*(\de_{P'',Q}^{F_\bu})\end{subarray}} & *+[l]{\begin{subarray}{l}\ts \qquad\,\,\,\, (\id_{\B_P},[\nabla^0])^*\ci{} \\ \ts (\De_{P'',Q}\!\ci\!\La_{P,P'})^*(K_Q^{F_\bu}).\end{subarray}\;\>}\!\!\!\!
}
\end{equation*}
Hence spin structures $(K_Q^{F_\bu})^{1/2}$ on $\B_Q$ pull back to spin structures $(K_P^{F_\bu})^{1/2}=\Ka_{P,Q}^*((K_Q^{F_\bu})^{1/2})$ on $\B_P$. This identification is independent of the choice of~$\io$.
\label{os4ex1}	
\end{ex}

\subsection{\texorpdfstring{Relating moduli spaces for Lie subgroups $G\subset H$}{Relating moduli spaces for Lie subgroups G⊂H}}
\label{os45}

We follow \S\ref{os26}, with modifications. Let $X,F_\bu$ be fixed, and let $H$ be a Lie group and $G\subset H$ a Lie subgroup, with Lie algebras $\g\subset\h$. If $P\ra X$ is a principal $G$-bundle, then $Q:=(P\t H)/G$ is a principal $H$-bundle over $X$. As in \S\ref{os26} we have a natural morphism $\Xi_{P,Q}:\B_P\ra\B_Q$ of topological stacks. Thus, we can try to compare the line bundles $K_P^{F_\bu},\Xi_{P,Q}^*(K_Q^{F_\bu})$ on~$\B_P$.

Write $\m=\h/\g$, and $\rho_\R:G\ra\Aut(\m)$ for the real representation induced by the adjoint representation of $H\supset G$. Then as for \eq{os2eq18} we have an isomorphism of line bundles on $\B_P$
\e
\xi_P^Q:K_P^{F_\bu}\ot_\C K_{P,\rho_\R}^{F_\bu}\longra\Xi_{P,Q}^*(K_Q^{F_\bu}),
\label{os4eq5}
\e
where $K_{P,\rho_\R}^{F_\bu}\ra\B_P$ is the determinant line bundle associated to the family of complex elliptic operators $\nabla_P\mapsto D^{\nabla_{\rho_\R(P)}}$ on $\B_P$.

Suppose as in \S\ref{os26} that we can give $\m$ the structure of a {\it complex\/} vector space $\m^\C$, such that $\rho_\C=\rho_\R:G\ra\Aut(\m^\C)$ is complex linear. We also have a complex conjugate vector space $\bar\m^\C$ and complex conjugate representation $\bar\rho_\C:G\ra\Aut(\bar\m^\C)$. Then $\m\ot_\R\C=\m^\C\op\ov\m^\C$, so that $\rho_\R(P)\ot_\R\C=\rho_\C(P)\op\bar\rho_\C(P)$ in complex vector bundles on $\B_P$. We have
\begin{gather*}
D^{\nabla_{\rho_\R(P)}}\!=\!D^{\nabla_{\rho_\C(P)}}\!\op\! D^{\nabla_{\bar\rho_\C(P)}}\!:\!
\Ga^\iy\bigl(\rho_\R(P)\!\ot_\R \!F_0\bigr)\!=\!\Ga^\iy\bigl((\rho_\C(P)\!\op\!\bar\rho_\C(P))\!\ot_\C \!F_0\bigr)\\
\longra\Ga^\iy\bigl(\rho_\R(P)\ot_\R F_1\bigr)=\Ga^\iy\bigl((\rho_\C(P)\op\bar\rho_\C(P))\ot_\C F_1\bigr).
\end{gather*}
So taking determinant line bundles gives
\e
K_{P,\rho_\R}^{F_\bu}\cong K_{P,\rho_\C}^{F_\bu}\ot_\C K_{P,\bar\rho_\C}^{F_\bu},
\label{os4eq6}
\e
with $K_{P,\rho_\C}^{F_\bu},K_{P,\bar\rho_\C}^{F_\bu}\ra\B_P$ the determinants of $\nabla_P\mapsto D^{\nabla_{\rho_\C(P)}},D^{\nabla_{\bar\rho_\C(P)}}$.

Next suppose that the complex elliptic operator $F_\bu$ is {\it antilinear self-adjoint\/} in the sense of Definition \ref{os3def1}. Then $\bar F_\bu\cong F_\bu^*$, so twisting by $\rho_\C(P),\bar\rho_\C(P)$ for $[\nabla_P]\in\B_P$ shows that $\ov{D^{\nabla_{\rho_\C(P)}}}\cong(D^{\nabla_{\bar\rho_\C(P)}})^*$, so taking determinant line bundles gives an isomorphism $\overline{K_{P,\rho_\C}^{F_\bu}}\cong(K_{P,\bar\rho_\C}^{F_\bu})^*$.

Now complex line bundles are equivalent to principal $\C^*$-bundles, where $\C^*\cong(0,\iy)\t\U(1)$. Thus we may canonically write 
\e
K_{P,\rho_\C}^{F_\bu}=R_{P,\rho_\C}^{F_\bu}\ot_\R L_{P,\rho_\C}^{F_\bu},\quad
K_{P,\bar\rho_\C}^{F_\bu}=R_{P,\bar\rho_\C}^{F_\bu}\ot_\R L_{P,\bar\rho_\C}^{F_\bu},
\label{os4eq7}
\e
where $R_{P,\rho_\C}^{F_\bu},R_{P,\bar\rho_\C}^{F_\bu}$ are the real line bundles associated to the principal $(0,\iy)$-bundles in $K_{P,\rho_\C}^{F_\bu},K_{P,\bar\rho_\C}^{F_\bu}$, and $L_{P,\rho_\C}^{F_\bu},L_{P,\bar\rho_\C}^{F_\bu}$ the complex line bundles associated to the principal $\U(1)$-bundles in $K_{P,\rho_\C}^{F_\bu},K_{P,\bar\rho_\C}^{F_\bu}$. As $\overline{K_{P,\rho_\C}^{F_\bu}}\cong(K_{P,\bar\rho_\C}^{F_\bu})^*$ we have
\e
R_{P,\rho_\C}^{F_\bu}\cong (R_{P,\bar\rho_\C}^{F_\bu})^*\quad \text{and}\quad
L_{P,\rho_\C}^{F_\bu}\cong L_{P,\bar\rho_\C}^{F_\bu}.
\label{os4eq8}
\e
Combining \eq{os4eq6}--\eq{os4eq8}, we may rewrite \eq{os4eq5} as an isomorphism on $\B_P$:
\e
\xi_{P,Q}^{F_\bu}:K_P^{F_\bu}\ot_\C (L_{P,\rho_\C}^{F_\bu})^{\ot^2}\longra\Xi_{P,Q}^*(K_Q^{F_\bu})
\label{os4eq9}
\e
We deduce:

\begin{prop}
\label{os4prop2}
In the situation above, with\/ $\h=\g\op\m$ for\/ $\m$ a complex\/ $G$-representation and\/ $F_\bu$ antilinear self-adjoint, if\/ $(K_Q^{F_\bu})^{1/2}$ is a spin structure on\/ $\B_Q$ then\/ $(K_P^{F_\bu})^{1/2}:=\Xi_{P,Q}^*((K_Q^{F_\bu})^{1/2})\ot_\C(L_{P,\rho_\C}^{F_\bu})^*$ is a spin structure on\/ $\B_P$ by \eq{os4eq9}. So spin structures on\/ $\B_Q$ pull back under\/ $\Xi_{P,Q}$ to spin structures on\/~$\B_P$.
\end{prop}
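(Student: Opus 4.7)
The plan is to read the proposition as a direct corollary of the isomorphism $\xi_{P,Q}^{F_\bu}$ of \eq{os4eq9}. Given any representative $(M,\jmath)$ of the spin structure $(K_Q^{F_\bu})^{1/2}$ on $\B_Q$, with $\jmath:M^{\ot^2}\ra K_Q^{F_\bu}$ an isomorphism of complex line bundles, I would set $L:=\Xi_{P,Q}^*(M)\ot_\C (L_{P,\rho_\C}^{F_\bu})^*$ and exhibit a canonical isomorphism $\io:L^{\ot^2}\ra K_P^{F_\bu}$ making $(L,\io)$ a square root of $K_P^{F_\bu}$.

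First I would compute the tensor square as a chain of canonical isomorphisms,
\begin{align*}
L^{\ot^2} &\cong \Xi_{P,Q}^*(M^{\ot^2})\ot_\C\bigl((L_{P,\rho_\C}^{F_\bu})^{\ot^2}\bigr)^* \\
&\xrightarrow{\;\Xi_{P,Q}^*(\jmath)\;} \Xi_{P,Q}^*(K_Q^{F_\bu})\ot_\C\bigl((L_{P,\rho_\C}^{F_\bu})^{\ot^2}\bigr)^* \\
&\xrightarrow{(\xi_{P,Q}^{F_\bu})^{-1}} K_P^{F_\bu}\ot_\C(L_{P,\rho_\C}^{F_\bu})^{\ot^2}\ot_\C\bigl((L_{P,\rho_\C}^{F_\bu})^{\ot^2}\bigr)^* \\
&\cong K_P^{F_\bu},
\end{align*}
and define $\io$ as this composite. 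Each step is canonical once $\jmath$ is fixed, so $(L,\io)$ is a well-defined pair representing a square root of $K_P^{F_\bu}$.

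Next I would verify that the assignment $[M,\jmath]\mapsto [L,\io]$ respects the equivalence relation defining spin structures. If $\jmath':(M')^{\ot^2}\ra K_Q^{F_\bu}$ is equivalent to $\jmath$ via $\kappa:M\ra M'$ with $\jmath'\ci(\kappa\ot\kappa)=\jmath$, then $\Xi_{P,Q}^*(\kappa)\ot\id:L\ra L'$ provides the required equivalence between the resulting square roots, by naturality of the pullback and of the tensor with $(L_{P,\rho_\C}^{F_\bu})^*$ in the chain above. Thus the pullback spin structure $\Xi_{P,Q}^*((K_Q^{F_\bu})^{1/2})\ot_\C (L_{P,\rho_\C}^{F_\bu})^*$ is intrinsically defined.

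The genuine content of the proposition lies upstream in the construction of \eq{os4eq9} itself: the splitting \eq{os4eq6} of $K_{P,\rho_\R}^{F_\bu}$ must be combined with the identifications \eq{os4eq8}, which rest on the antilinear self-adjointness of $F_\bu$, in order that the real factors $R_{P,\rho_\C}^{F_\bu},R_{P,\bar\rho_\C}^{F_\bu}$ pair off to give a canonical trivialization while the complex factors $L_{P,\rho_\C}^{F_\bu},L_{P,\bar\rho_\C}^{F_\bu}$ combine into the explicit square $(L_{P,\rho_\C}^{F_\bu})^{\ot^2}$. Once \eq{os4eq9} is in hand, the proposition reduces to the direct calculation above.
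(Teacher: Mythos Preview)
Your proposal is correct and matches the paper's approach exactly: the paper presents Proposition~\ref{os4prop2} as an immediate deduction (``We deduce:'') from the isomorphism \eq{os4eq9}, and your argument simply spells out that deduction in full detail. If anything, you have written more than the paper does, since you carefully verify well-definedness on equivalence classes of square roots, which the paper leaves implicit.
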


The next example is a kind of converse to Example~\ref{os4ex1}.

\begin{ex} As in Example \ref{os2ex2}, we have an inclusion $\U(m)\hookra\SU(m+1)$ with an isomorphism $\m=\su(m+1)/\u(m)\cong\C^m$, such that the action of $\U(m)$ on $\m$ is complex linear. For fixed $X,F_\bu$ with $F_\bu$ antilinear self-adjoint, let $Q\ra X$ be a principal $\U(m)$-bundle, and $R=(Q\t\SU(m+1))/\U(m)$ its $\SU(m+1)$-bundle. Then by Proposition \ref{os4prop2}, spin structures for $\B_R$ pull back to spin structures for~$\B_Q$.
\label{os4ex2}	
\end{ex}

\begin{ex} As in Example \ref{os2ex3}, we have an inclusion $\U(m_1)\t\U(m_2)\hookra\U(m_1+m_2)$ with an isomorphism $\m=\u(m_1+m_2)/(\u(m_1)\op\u(m_2))\cong\C^{m_1m_2}$, such that the action of $\U(m_1)\t\U(m_2)$ on $\m$ is complex linear. For fixed $X,F_\bu$ with $F_\bu$ antilinear self-adjoint, suppose $P_1\ra X$, $P_2\ra X$ are principal $\U(m_1)$- and $\U(m_2)$-bundles. Define a principal $\U(m_1+m_2)$-bundle $P_1\op P_2\ra X$ as in \eq{os2eq20}, and a morphism $\Phi_{P_1,P_2}:\B_{P_1}\t\B_{P_2}\longra\B_{P_1\op P_2}$ as in \eq{os2eq21}. Then combining the material of \S\ref{os43} for the product of $\U(m_1),\U(m_2)$ with the above, we have a natural isomorphism of line bundles on~$\B_{P_1}\t\B_{P_2}$:
\e
\begin{split}
&\phi^{F_\bu}_{P_1,P_2}=\La_{P_1,P_2}^*(\xi_{P_1\t_XP_2,P_1\op P_2}^{F_\bu})\ci\la_{P_1,P_2}^{F_\bu}:\\
&\quad (K_{P_1}^{F_\bu}\bt K_{P_2}^{F_\bu})\ot_\C (\La_{P_1,P_2}^*(L_{P_1\t_XP_2,\rho_\C}^{F_\bu}))^{\ot^2}\,{\buildrel\cong\over\longra}\,
\Phi_{P_1,P_2}^*(K_{P_1\op P_2}^{F_\bu}).
\end{split}
\label{os4eq10}
\e
Proposition \ref{os4prop1} now implies that a spin structure for $\B_{P_1\op P_2}$ determines unique spin structures for $\B_{P_1}$ and $\B_{P_2}$.
\label{os4ex3}	
\end{ex}

\subsection{\texorpdfstring{Stabilization and spin structures for $\U(m)$-bundles}{Stabilization and spin structures for U(m)-bundles}}
\label{os46}

Let $X$ be a compact, connected $n$-manifold, and $P_1\ra X$, $P_2\ra X$ be principal $\U(m_1)$- and $\U(m_2)$-bundles, so that Example \ref{os4ex3} defines a morphism $\Phi_{P_1,P_2}:\B_{P_1}\t\B_{P_2}\ra\B_{P_1\op P_2}$. Fix a connection $\nabla_{P_2}$ on $P_2$, and consider the morphism $\Phi_{P_1,P_2}(-,[\nabla_{P_2}]):\B_{P_1}\ra\B_{P_1\op P_2}$. Now as in Noohi \cite{Nooh2}, `hoparacompact' topological stacks (which include our $\B_P$) have a well-behaved homotopy theory, so we can consider their homotopy groups (which are just homotopy groups of an associated topological space called the `classifying space'). As in \cite[\S 2.3.3]{JTU}, by facts about `stabilization' of $\U(m)$-bundles, it is known that
\e
\begin{gathered}
\pi_k(\Phi_{P_1,P_2}(-,[\nabla_{P_2}])):\pi_k(\B_{P_1})\longra\pi_k(\B_{P_1\op P_2})\\
\text{is bijective if $0\le k\le 2m_1-n,$ and surjective if $k=2m_1-n+1$.}\\
\end{gathered}
\label{os4eq11}
\e
In other words, $\Phi_{P_1,P_2}(-,[\nabla_{P_2}])$ is an $(2m_1-n+1)$-equivalence as defined in \cite[Def.~6.68]{KiDa}. Following \cite[\S 2.3.3]{JTU}, the proof of \eqref{os4eq11} makes use of the `group completed' version of $\B_{P_1},$ where $\Phi_{P_1,P_2}(-,[\nabla_{P_2}])$ has a homotopy inverse and to which $\B_{P_1}$ and $\B_{P_1\op P_2}$ admit $(2m_1-n+1)$-connected and  $(2m_1+2m_2-n+1)$-connected maps.


Now if a morphism $f:X\ra Y$ of topological spaces or stacks is $3$-connected, so is bijective on $\pi_0,\pi_1, \pi_2$ and surjective on $\pi_3,$ then pullback $f^*$ induces an equivalence between the categories of topological complex line bundles on $Y$ and $X$. Indeed, as in \cite[Cor.~6.69]{KiDa}, the Hurewicz theorem implies the vanishing of the homology of the mapping cone of $f$ in degrees $0\leqslant *\leqslant 3.$ The long exact sequence in cohomology of the mapping cylinder then gives that $f^*\colon H^*(Y)\to H^*(X)$ is an isomorphism in degrees $0\leqslant *\leqslant 2.$ Hence our functor is essentially surjective, being bijective on isomorphism classes of complex line bundles. Similarly, our functor is fully faithful, using that the automorphism groups $\Aut(L)=[X,\C^*]$ are isomorphic to $H^1(X).$ We deduce:

\begin{prop}
\label{os4prop3}
Let\/ $X$ be a compact, connected\/ $n$-manifold and\/ $F_\bu$ be an antilinear self-adjoint elliptic operator on\/ $X.$ Suppose\/ $P_1\ra X,$\/ $P_2\ra X$ are principal\/ $\U(m_1)$- and\/ $\U(m_2)$-bundles, where\/ $2m_1\ge n+2$. Then the map defined in Example\/ {\rm\ref{os4ex3}} from spin structures on\/ $\B_{P_1\op P_2}$ to spin structures on\/ $\B_{P_1}$ is a {\rm 1--1} correspondence. In particular, a spin structure\/ $(K_{P_1}^{F_\bu})^{1/2}$ on\/ $\B_{P_1}$ determines a unique spin structure\/ $(K_{P_1\op P_2}^{F_\bu})^{1/2}$ on\/ $\B_{P_1\op P_2}$.
\end{prop}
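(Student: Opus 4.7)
The plan is to use the stabilization statement \eqref{os4eq11} to reduce the proposition to an elementary fact about pullback of line bundles, and then to verify that the resulting bijection matches the map defined in Example \ref{os4ex3}. Fix any $[\nabla_{P_2}]\in\B_{P_2}$ and set $s:=\Phi_{P_1,P_2}(-,[\nabla_{P_2}]):\B_{P_1}\ra\B_{P_1\op P_2}$. By \eqref{os4eq11} and the hypothesis $2m_1\ge n+2,$ the map $s$ is at least a $3$-equivalence, so by the discussion immediately preceding Proposition \ref{os4prop3} the functor $s^*$ is an equivalence of categories of topological complex line bundles on $\B_{P_1\op P_2}$ and on $\B_{P_1}$. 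In particular, for every line bundle $K$ on $\B_{P_1\op P_2},$ pulling back along $s$ induces a bijection between isomorphism classes of square roots of $K$ and of $s^*(K)$: essential surjectivity lifts any square root of $s^*(K)$ to one of $K,$ while full faithfulness ensures that the equivalence relation on square roots is both preserved and reflected.

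Next, restrict the isomorphism \eqref{os4eq10} to $\B_{P_1}\t\{[\nabla_{P_2}]\}\cong\B_{P_1}$. Setting $F:=K_{P_2}^{F_\bu}\vert_{[\nabla_{P_2}]}$ and letting $M$ denote the restriction of $\La_{P_1,P_2}^*(L_{P_1\t_XP_2,\rho_\C}^{F_\bu})$ to this slice, one obtains a canonical isomorphism $K_{P_1}^{F_\bu}\ot_\C F\ot_\C M^{\ot 2}\cong s^*(K_{P_1\op P_2}^{F_\bu})$. After choosing a square root $F^{1/2}$ of the $1$-dimensional vector space $F,$ tensoring with $M\ot F^{1/2}$ realises an explicit bijection between isomorphism classes of square roots of $K_{P_1}^{F_\bu}$ and of $s^*(K_{P_1\op P_2}^{F_\bu})$. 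Composing with the bijection of the preceding paragraph yields a bijection between spin structures on $\B_{P_1\op P_2}$ and on $\B_{P_1}$.

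The remaining step is to check that this composite is exactly the map of Example \ref{os4ex3}. Inspecting formula \eqref{os4eq1} in the proof of Proposition \ref{os4prop1}, the square root of $K_{P_1}^{F_\bu}$ assigned to a given $(K_{P_1\op P_2}^{F_\bu})^{1/2}$ is, by construction, obtained by restricting $\Phi_{P_1,P_2}^*((K_{P_1\op P_2}^{F_\bu})^{1/2})$ to $\B_{P_1}\t\{[\nabla_{P_2}]\}$ and tensoring with $M^*\ot(F^{1/2})^*,$ which matches the composite described above (and a check shows the induced map on isomorphism classes is independent of the choice of $F^{1/2}$). This compatibility is the only step requiring a hands-on diagram chase and is the main obstacle; everything else follows formally once $s^*$ is known to be an equivalence on topological complex line bundles.
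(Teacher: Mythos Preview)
Your proof is correct and follows the same route as the paper: the paper simply writes ``We deduce:'' after establishing that $s=\Phi_{P_1,P_2}(-,[\nabla_{P_2}])$ is a $(2m_1-n+1)$-equivalence and that a $3$-connected map induces an equivalence of categories of complex line bundles, leaving the remaining steps implicit. You have spelled out those steps---the bijection between square roots of $K$ and of $s^*(K)$, the restriction of \eqref{os4eq10} to the slice, and the identification with the map of Example~\ref{os4ex3} via \eqref{os4eq1}---which is exactly what the paper intends but does not write down.
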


As in \cite[\S 2.3.3 \& Prop.~2.24]{JTU}, if $P\ra X$ is a principal $\U(m)$-bundle with $2m\ge n+1$ then there is a canonical isomorphism $\pi_1(\B_P)\cong K^1(X)$, for $K^1(X)$ the odd complex K-theory group of $X$. Now two square roots of a complex line bundle $K_P^{F_\bu}\ra\B_P$ differ by a principal $\Z_2$-bundle on $\B_P$, and as $\B_P$ is connected, by taking monodromy, principal $\Z_2$-bundles are in 1-1 correspondence with group morphisms $\pi_1(\B_P)\ra\Z_2$. This yields:

\begin{prop}
\label{os4prop4}
Let\/ $X$ be a compact, connected\/ $n$-manifold and\/ $F_\bu$ be a complex elliptic operator on\/ $X$. Suppose\/ $P\ra X$ is a principal\/ $\U(m)$-bundle, where\/ $2m\ge n+1$. Then there is a canonical isomorphism\/ $\pi_1(\B_P)\cong K^1(X)$. If\/ $\B_P$ admits a spin structure\/ $(K_P^{F_\bu})^{1/2},$ then the family of all spin structures on\/ $\B_P$ is a torsor over\/ $\Hom(K^1(X),\Z_2)$.
\end{prop}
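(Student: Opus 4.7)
The plan is to handle the two assertions separately. The first, $\pi_1(\B_P) \cong K^1(X)$ for $2m \geqslant n+1,$ is established in \cite[\S 2.3.3, Prop.~2.24]{JTU} and would simply be cited. It rests on the stabilization principle recalled in \eq{os4eq11}: the maps $\Phi_{P_1,P_2}(-,[\nabla_{P_2}])$ are $(2m_1-n+1)$-equivalences in the stable range, so passing to the colimit identifies the classifying space of the stable unitary gauge group on $X$ with (a component of) $\Map(X, BU \times \Z),$ whose fundamental group is $K^1(X).$

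For the torsor statement, the key step is a canonical bijection between isomorphism classes of pairs $(M,\phi),$ with $M \to \B_P$ a complex line bundle and $\phi \colon M^{\otimes 2} \to \O_{\B_P}$ a trivialization of its square, and isomorphism classes of principal $\Z_2$-bundles on $\B_P.$ Explicitly, $(M,\phi) \mapsto \{v \in M : \phi(v \otimes v) = 1\}$ is a principal $\Z_2$-bundle, with inverse $Q \mapsto Q \times_{\Z_2} \C$ equipped with its canonical square trivialization; this rests on the exact sequence $1 \to \Z_2 \to \C^* \to \C^* \to 1$ in which the second map is squaring. The abelian group of such pairs then acts on the set of spin structures by $(M,\phi) \cdot [L,\iota] := [L \otimes M, \iota \otimes \phi],$ and this action is transitive (given two spin structures $[L,\iota]$ and $[L',\iota']$ take $M = L' \otimes L^*$ with $\phi$ induced from $\iota',\iota$) and free (an isomorphism of spin structures $\jmath \colon L \otimes M \to L$ forces the associated $\Z_2$-bundle to be trivial).

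Finally, since $\B_P$ is a connected hoparacompact topological stack in the sense of Noohi \cite{Nooh2}, principal $\Z_2$-bundles on it are classified by $H^1(\B_P,\Z_2),$ which by monodromy equals $\Hom(\pi_1(\B_P),\Z_2)$ as $\Z_2$ is abelian. Combined with the first assertion, this yields the torsor structure over $\Hom(K^1(X),\Z_2).$ No step is particularly deep; the main point deserving care is the translation between square-root pairs and $\Z_2$-bundles, and the observation that monodromy classification of $\Z_2$-bundles on a hoparacompact stack proceeds just as for topological spaces.
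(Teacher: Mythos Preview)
Your proposal is correct and follows essentially the same route as the paper: both cite \cite[\S 2.3.3, Prop.~2.24]{JTU} for $\pi_1(\B_P)\cong K^1(X)$, observe that two square roots of $K_P^{F_\bu}$ differ by a principal $\Z_2$-bundle, and classify those by monodromy as $\Hom(\pi_1(\B_P),\Z_2)$. You have simply spelled out the torsor structure and the square-root/$\Z_2$-bundle correspondence in more detail than the paper's one-line version.
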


\section{The main results}
\label{os5}

\subsection[\texorpdfstring{Natural spin structures for all $\U(m)$- and $\SU(m)$-bundles}{Natural spin structures for all U(m)- and SU(m)-bundles}]{Natural spin structures for all $\U(m)$-, $\SU(m)$-bundles}
\label{os51}

The next theorem, proved in \S\ref{os6} using the material of \S\ref{os44}--\S\ref{os46}, shows that if $F_\bu$ is antilinear self-adjoint, and we can choose a spin structure $\hat\si_Q^{F_\bu}$ on $\B_Q$ for any {\it one\/} $\U(N)$-bundle $Q\ra X$ with $N\gg 0$ (for simplicity we take $Q=X\t\U(N)$ to be the trivial $\U(N)$-bundle), then we can construct natural spin structures on $\B_P$ for {\it all\/} $\U(m)$- or $\SU(m)$-bundles $P\ra X$. The analogue of Theorem \ref{os5thm1} does not work for orientations in \S\ref{os2} (though \cite[Prop.~2.24]{JTU} gives a partial analogue for {\it orientability\/} of moduli spaces $\B_P$), for the reasons in Remark~\ref{os4rem1}.

\begin{thm}
\label{os5thm1}
Let\/ $X$ be a compact, connected\/ $n$-manifold and\/ $F_\bu$ be an antilinear self-adjoint complex elliptic operator on\/ $X$. Suppose that for some\/ $N$ with\/ $2N\ge n+2,$ we are given a spin structure\/ $\hat\si_{X\t\U(N)}^{F_\bu}$ for the trivial principal\/ $\U(N)$-bundle\/ $X\t\U(N)\ra X$. Then, whenever\/ $P\ra X$ is a principal\/ $\U(m)$- or\/ $\SU(m)$-bundle for any\/ $m\ge 0,$ there is a unique spin structure\/ $\si_P^{F_\bu}$ on\/ $\B_P,$ such that the family of all\/ $\si_P^{F_\bu}$ satisfy:
\begin{itemize}
\setlength{\itemsep}{0pt}
\setlength{\parsep}{0pt}
\item[{\bf(a)}] $\si_{X\t\U(N)}^{F_\bu}=\hat\si_{X\t\U(N)}^{F_\bu}$.
\item[{\bf(b)}] Suppose\/ $P\ra X,$ $P'\ra X$ are principal\/ $\U(m)$- or $\SU(m)$-bundles and\/ $\rho:P\ra P'$ is an isomorphism. This induces isomorphisms\/ $\B_P\cong\B_{P'}$ and\/ $K_P^{F_\bu}\cong K_{P'}^{F_\bu},$ both of which are independent of the choice of\/ $\rho$. These identify the canonical spin structures\/~$\si_P^{F_\bu}\cong\si_{P'}^{F_\bu}$.
\item[{\bf(c)}] Let\/ $P_1\ra X,$ $P_2\ra X$ be principal\/ $\U(m_1)$- and\/ $\U(m_2)$-bundles, so that\/ $P_1\op P_2\ra X$ is a principal\/ $\U(m_1+m_2)$-bundle. Then Example\/ {\rm\ref{os4ex3}} defines a map from spin structures on\/ $\B_{P_1\op P_2}$ to pairs of spin structures on\/ $\B_{P_1}$ and\/ $\B_{P_2},$ and this should map\/ $\si_{P_1\op P_2}^{F_\bu}\mapsto(\si_{P_1}^{F_\bu},\si_{P_2}^{F_\bu})$.
\item[{\bf(d)}] Let\/ $P\ra X$ be a principal\/ $\SU(m)$-bundle, and\/ $Q=(P\t\U(1))/\Z_m$ be the associated principal\/ $\U(m)$-bundle. Then Example\/ {\rm\ref{os4ex1}} maps spin structures on\/ $\B_Q$ to spin structures on\/ $\B_P,$ and this should map\/ $\si_Q^{F_\bu}\mapsto\si_P^{F_\bu}$.
\end{itemize}
The family of possible choices for\/ $\hat\si_{X\t\U(N)}^{F_\bu}$ is a torsor over\/~$\Hom(K^1(X),\Z_2)$.
\end{thm}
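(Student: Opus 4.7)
The plan is to build $\si_P^{F_\bu}$ for all $P$ by stabilizing with the given reference bundle $X\t\U(N)$, exploiting the fact that, unlike orientations, spin structures descend \emph{uniquely} through tensor-product decompositions by Proposition \ref{os4prop1}. Concretely, for a $\U(m)$-bundle $P\ra X$, form the $\U(N+m)$-bundle $P^+=(X\t\U(N))\op P$. Since $2N\ge n+2$, Proposition \ref{os4prop3} provides a canonical bijection between spin structures on $\B_{P^+}$ and spin structures on $\B_{X\t\U(N)}$; let $\tilde\si_P$ be the unique preimage of $\hat\si^{F_\bu}_{X\t\U(N)}$. Applying Example \ref{os4ex3} (via Proposition \ref{os4prop1} on $\phi^{F_\bu}_{X\t\U(N),P}$) decomposes $\tilde\si_P$ into a pair whose first entry is $\hat\si^{F_\bu}_{X\t\U(N)}$, and we define $\si_P^{F_\bu}$ to be its second entry. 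For an $\SU(m)$-bundle $P$ we set $Q=(P\t\U(1))/\Z_m$ and define $\si_P^{F_\bu}$ as the pullback of $\si_Q^{F_\bu}$ along $\Ka_{P,Q}$, so (d) is automatic.

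Uniqueness follows formally from (a)--(d): any family satisfying them forces, via (c) with $P_2=X\t\U(N)$ and (a), the stabilization $\si_{P^+}^{F_\bu}$ to project to $(\hat\si^{F_\bu}_{X\t\U(N)},\si_P^{F_\bu})$, which by Proposition \ref{os4prop3} pins down both $\si_{P^+}^{F_\bu}$ and hence $\si_P^{F_\bu}$; the $\SU(m)$ case is then forced by (d). Property (b) is naturality of the determinant line bundle and of the stabilization bijection under bundle isomorphisms.

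For (a), applying the construction with $P=X\t\U(N)$ produces $\tilde\si_{X\t\U(N)}$ on $\B_{X\t\U(2N)}$ decomposing as $(\hat\si^{F_\bu}_{X\t\U(N)},\si_{X\t\U(N)}^{F_\bu})$. The block-swap automorphism $\rho$ of $X\t\U(2N)$ induces an automorphism of $\B_{X\t\U(2N)}$ covering the factor-swap on $\B_{X\t\U(N)}\t\B_{X\t\U(N)}$; by (b) it preserves $\tilde\si_{X\t\U(N)}$ up to isomorphism, and its decomposition is the swapped pair, forcing $\si_{X\t\U(N)}^{F_\bu}=\hat\si^{F_\bu}_{X\t\U(N)}$. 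For (c), the key is the associativity isomorphism $((X\t\U(N))\op P_1)\op P_2\cong (X\t\U(N))\op(P_1\op P_2)$: decomposing $\tilde\si_{P_1\op P_2}$ in two stages first along the outer and then the inner splitting, Proposition \ref{os4prop3} identifies the intermediate spin structure on $\B_{(X\t\U(N))\op P_1}$ with $\tilde\si_{P_1}$ (they share the same restriction to $\B_{X\t\U(N)}$), so the inner decomposition yields $\si_{P_1}^{F_\bu}$; the $\si_{P_2}^{F_\bu}$ entry is obtained by swapping the roles of $P_1$ and $P_2$. Finally, the torsor statement is Proposition \ref{os4prop4} applied with $m=N$, using $2N\ge n+2\ge n+1$.

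The main obstacle is the diagram chase in (c), where one must verify that the various isomorphisms of Example \ref{os4ex3} compose associatively without introducing spurious contributions from the line bundles $L_{P_1\t_X P_2,\rho_\C}^{F_\bu}$ appearing in \eqref{os4eq10}. Here the antilinear self-adjoint hypothesis on $F_\bu$ is essential: it ensures the relevant operators have index zero and eliminates the sign factor $(-1)^{\ind^{F_\bu}_{P_1}\cdot \ind^{F_\bu}_{P_2}}$ of Remark \ref{os4rem2}, so that the reassociation of the $\phi^{F_\bu}_{P_i,P_j}$ and compatibility with the squaring map $\phi\mapsto\phi^{1/2}$ of Proposition \ref{os4prop1} go through without any twist.
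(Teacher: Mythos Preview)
Your argument is correct and takes a route parallel to but distinct from the paper's. The paper first extends $\hat\si^{F_\bu}_{X\t\U(N)}$ to $\hat\si^{F_\bu}_{X\t\U(m)}$ for all $m\ge N$ via Proposition~\ref{os4prop3}, and then for a $\U(m_1)$-bundle $P_1$ constructs (by a short general-position argument on bundle maps $E_1\hookra\underline{\C}^m$) a complement $P_2$ with $P_1\op P_2\cong X\t\U(m)$ \emph{trivial}, defining $\si^{F_\bu}_{P_1}$ as the image of $\hat\si^{F_\bu}_{X\t\U(m)}$ under Example~\ref{os4ex3}. You instead stabilise directly to $P^+=(X\t\U(N))\op P$, which need not be trivial, and use Proposition~\ref{os4prop3} the other way round: lift $\hat\si^{F_\bu}_{X\t\U(N)}$ to $\B_{P^+}$ and then project to $\B_P$. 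Your approach avoids the complement-existence step entirely; the price is the block-swap argument for~(a), which in the paper's setup is essentially automatic since one is always restricting between trivial bundles and the extension $\hat\si^{F_\bu}_{X\t\U(m)}$ was designed to make these restrictions consistent. Both verifications of~(c) are the same associativity chase; the paper keeps the ambient bundles trivial and reads off both entries from the already-constructed $\hat\si^{F_\bu}_{X\t\U(m_a+k_a)}$, whereas you identify the $P_1$-entry via Proposition~\ref{os4prop3} and then invoke the $P_1\leftrightarrow P_2$ symmetry (harmless by Remark~\ref{os4rem2}) for the $P_2$-entry.
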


\subsection{Spin structures, orientations, and loop spaces}
\label{os52}

The next two definitions set up notation for Theorems \ref{os5thm2} and~\ref{os5thm3}.

\begin{dfn}
\label{os5def1}
Let $X$ be a compact, connected $n$-manifold, and write the circle $\cS^1$ as $\cS^1=\bigl\{e^{i\th}:\th\in[0,2\pi)\bigr\}\subset\C$, so $\th$ is a (periodic) coordinate on $\cS^1$, and $1\in\cS^1$ is a basepoint. Then $X\t\cS^1$ is a compact, connected $(n+1)$-manifold.

Let $G$ be a Lie group, and $P\ra X$, $Q\ra X\t\cS^1$ be principal $G$-bundles with an isomorphism $Q\vert_{X\t\{1\}}\cong P$ over $X\t\{1\}\cong X$. Then we also have (non-canonical) isomorphisms $Q\vert_{X\t\{e^{i\th}\}}\cong P$ for all $e^{i\th}\in\cS^1$.

For each $e^{i\th}\in\cS^1$, define a morphism of topological stacks $\Ga_{Q,P}^{e^{i\th}}:\B_Q\ra\B_P$ to map $\Ga_{Q,P}^{e^{i\th}}:[\nabla_Q]\mapsto[\nabla_Q\vert_{X\t\{e^{i\th}\}}]$. Here $\nabla_Q$ is a connection on $Q\ra X\t\cS^1$ (modulo the gauge group $\G_Q$), and $\nabla_Q\vert_{X\t\{e^{i\th}\}}$ is its restriction to $Q\vert_{X\t\{e^{i\th}\}}\ra X\t\{e^{i\th}\}\cong X$. Since $Q\vert_{X\t\{e^{i\th}\}}\cong P$, we can consider $\nabla_Q\vert_{X\t\{e^{i\th}\}}$ as a connection on $P$, so that $[\nabla_Q\vert_{X\t\{e^{i\th}\}}]\in\B_P$. Here as the definition of $\B_P$ quotients out by $\G_P=\Aut(P)$, the morphism $\Ga_{Q,P}^{e^{i\th}}:\B_Q\ra\B_P$ is independent of the choice of isomorphism~$Q\vert_{X\t\{e^{i\th}\}}\cong P$.

As $\Ga_{Q,P}^{e^{i\th}}$ depends continuously on $e^{i\th}$, we have a morphism of topological stacks $\Ga_{Q,P}:\B_Q\ra\Map_{C^0}(\cS^1,\B_P)$, taking $\Ga_{Q,P}:[\nabla_Q]\mapsto\bigl(e^{i\th}\mapsto\Ga_{Q,P}^{e^{i\th}}([\nabla_Q])\bigr)$. Here $\Map_{C^0}(\cS^1,\B_P)$ is the {\it free loop space\/} of $\B_P$.

Also $P\t\cS^1\ra X\t\cS^1$ is a principal $G$-bundle, and there is a natural morphism $\Pi_P:\B_P\ra\B_{P\t\cS^1}$ mapping $\Pi_P:[\nabla_P]\mapsto[\pi_X^*(\nabla_P)]$. Hence $\Nu_{Q,P}=\Pi_P\ci \Ga_{Q,P}^1:\B_Q\ra\B_{P\t\cS^1}$ is a morphism of topological stacks.
\end{dfn}

\begin{dfn}
\label{os5def2}	
Let $X$ be a compact, connected manifold, and suppose that $F_\bu=(F_0,F_1,D)$ is a first order antilinear self-adjoint linear elliptic operator on $X$ as in Definition \ref{os3def1}, so that $F_0=\bar F_1$. Write $F\ra X$ for the real vector bundle underlying both $F_0$ and $F_1$, and write $i_{F_0},i_{F_1}:\Ga^\iy(F)\ra\Ga^\iy(F)$ for the complex structures on $F_0,F_1$, so that $i_{F_0}=-i_{F_1}$. Define a real vector bundle $E\ra X\t\cS^1$ by $E=\pi_X^*(F)$, where $\pi_X:X\t\cS^1\ra X$ is the projection. 

Define a partial differential operator $\ti D:\Ga^\iy(E)\ra\Ga^\iy(E)$ on $X\t\cS^1$ by
\e
\ti D=\pi_X^*(D)+\pi_X^*(i_{F_0})\frac{\pd}{\pd\th}.
\label{os5eq1}
\e
We claim that $\ti D$ is a self-adjoint real linear elliptic operator. Self-adjointness holds since $D$ is self-adjoint as a real operator, and $\pi_X^*(i_{F_0})$, $\frac{\pd}{\pd\th}$ are both anti-self-adjoint and commute. 
For ellipticity, note that for $\xi \in T^*X$ and $x \in \R$
\begin{equation*}
 \si_{\xi+xd\th}(\ti{D}^*\ti{D}) = x^2 + \si_\xi(D)^*\si_\xi(D)
\end{equation*}


Write $E_\bu=(E,E,\ti D)$ as in Definition \ref{os2def2}. Now let $G,P,Q,P\t\cS^1$ and $\Ga_{Q,P}:\B_Q\ra\Map_{C^0}(\cS^1,\B_P)$ be as in Definition \ref{os5def1}. Then using $E_\bu$ we have principal $\Z_2$-bundles $\check O_Q^{E_\bu}\ra\B_Q$, $\check O_{P\t\cS^1}^{E_\bu}\ra\B_{P\t\cS^1}$ and notions of n-orientation $\check\om_Q^{E_\bu},\check\om_{P\t\cS^1}^{E_\bu}$ on $\B_Q$ and $\B_{P\t\cS^1}$, as in \S\ref{os22}. Also using $F_\bu$ we have a complex line bundle $K_P^{F_\bu}\ra\B_P$, and a notion of spin structure $\si_P^{F_\bu}=(K_P^{F_\bu})^{1/2}$ on $\B_P$, as in~\S\ref{os31}.

Let $\ga:\cS^1\ra\B_P$ lie in $\Map_{C^0}(\cS^1,\B_P)$. Then $\ga^*(K_P^{F_\bu})\ra\cS^1$ is a complex line bundle, so we can consider its square roots. Every complex line bundle $L\ra\cS^1$ admits square roots. Two square roots of $L$ differ up to isomorphism by a principal $\Z_2$-bundle over $\cS^1$. As there are two principal $\Z_2$-bundles on $\cS^1$ up to isomorphism, with monodromy $1$ and $-1$, any complex line bundle $L\ra\cS^1$ has exactly two square roots $L^{1/2}$ up to isomorphism.

Define a principal $\Z_2$-bundle $M^{F_\bu}_P\ra \Map_{C^0}(\cS^1,\B_P)$ to have fibre $M^{F_\bu}_P\vert_\ga$ over each $\ga\in\Map_{C^0}(\cS^1,\B_P)$ the set of two isomorphism classes of square roots of $\ga^*(K_P^{F_\bu})\ra\cS^1$, where the $\Z_2$-action exchanges the two isomorphism classes. Since $\ga^*(K_P^{F_\bu})$ varies continuously with $\ga$, this is well-defined.

Now suppose $P_1,P_2\ra X$ are principal $\U(m_1)$- and $\U(m_2)$-bundles, so that $P_1\op P_2\ra X$ is a principal $\U(m_1+m_2)$-bundle. In \eq{os2eq21} we defined a morphism $\Phi_{P_1,P_2}:\B_{P_1}\t\B_{P_2}\ra\B_{P_1\op P_2}$. Define a morphism
\e
\begin{split}
&\Chi_{P_1,P_2}:\Map_{C^0}(\cS^1,\B_{P_1})\t\Map_{C^0}(\cS^1,\B_{P_2})\longra
\Map_{C^0}(\cS^1,\B_{P_1\op P_2}) \\
&\text{by}\qquad \Chi_{P_1,P_2}:(\ga_1,\ga_2)\longmapsto \Phi_{P_1,P_2}\ci(\ga_1,\ga_2).
\end{split}
\label{os5eq2}
\e
Let the isomorphism $\phi^{F_\bu}_{P_1,P_2}$ of line bundles on $\B_{P_1}\t\B_{P_2}$ be as in \eq{os4eq10}. Define an isomorphism of principal $\Z_2$-bundles on $\Map_{C^0}(\cS^1,\B_{P_1})\t\Map_{C^0}(\cS^1,\B_{P_2})$:
\e
\chi_{P_1,P_2}^{F_\bu}:M^{F_\bu}_{P_1}\bt_{\Z_2}M^{F_\bu}_{P_2}\longra\Chi_{P_1,P_2}^*(M^{F_\bu}_{P_1\op P_2})
\label{os5eq3}
\e
by, for each $(\ga_1,\ga_2)\in \Map_{C^0}(\cS^1,\B_{P_1})\t\Map_{C^0}(\cS^1,\B_{P_2})$, using the isomorphism
\begin{align*}
&(\ga_1,\ga_2)^*(\phi^{F_\bu}_{P_1,P_2}):\ga_1^*(K_{P_1}^{F_\bu})\ot_\C\ga_2^*(K_{P_2}^{F_\bu})\ot_\C (\La_{P_1,P_2}\ci(\ga_1,\ga_2))^*(L_{P_1\t_XP_2,\rho_\C}^{F_\bu}))^{\ot^2}\\
&\qquad\qquad{\buildrel\cong\over\longra}\,
(\Phi_{P_1,P_2}\ci(\ga_1,\ga_2))^*(K_{P_1\op P_2}^{F_\bu})=\Chi_{P_1,P_2}(\ga_1,\ga_2)^*(K_{P_1\op P_2}^{F_\bu})\end{align*}
of complex line bundles on $\cS^1$ to map square roots $\ga_1^*(K_{P_1}^{F_\bu})^{1/2}$, $\ga_2^*(K_{P_2}^{F_\bu})$ to a square root~$\Chi_{P_1,P_2}(\ga_1,\ga_2)^*(K_{P_1\op P_2}^{F_\bu})^{1/2}$.
\end{dfn}
	
The following theorem will be proved in~\S\ref{os7}.

\begin{thm}
\label{os5thm2}
In the situation of Definitions\/ {\rm\ref{os5def1}} and\/ {\rm\ref{os5def2},} there is a natural isomorphism of principal\/ $\Z_2$-bundles over\/ $\B_Q\!:$
\e
\ga_{Q,P}^{F_\bu}:\check O_Q^{E_\bu}\ot_{\Z_2}\Nu_{Q,P}^*(\check O_{P\t\cS^1}^{E_\bu})\longra\Ga_{Q,P}^*(M^{F_\bu}_P).
\label{os5eq4}
\e
This has the following properties:
\begin{itemize}
\setlength{\itemsep}{0pt}
\setlength{\parsep}{0pt}
\item[{\bf(i)}] Take\/ $Q=P\t\cS^1,$ and pull back\/ \eq{os5eq4} by\/ $\Pi_P:\B_P\ra\B_{P\t\cS^1},$ giving
\e
\begin{split}
&\Pi_P^*(\ga_{P\t\cS^1,P}^{F_\bu}):
\Pi_P^*(\check O_{P\t\cS^1}^{E_\bu})\ot_{\Z_2}(\Nu_{P\t\cS^1,P}\ci\Pi_P)^*(\check O_{P\t\cS^1}^{E_\bu})\\
&\qquad\qquad\longra(\Ga_{P\t\cS^1,P}\ci\Pi_P)^*(M^{F_\bu}_P).
\end{split}
\label{os5eq5}
\e
As\/ $\Nu_{P\t\cS^1,P}\ci\Pi_P=\Pi_P,$ the left hand side is the square of\/ $\Pi_P^*(\check O_{P\t\cS^1}^{E_\bu}),$ and so is canonically trivial. On the right,\/ $\Ga_{P\t\cS^1,P}\ci\Pi_P$ maps each point\/ $[\nabla_P]$ to the constant loop\/ $\ga_{[\nabla_P]}$ at\/ $[\nabla_P]$. Now\/ $\ga_{[\nabla_P]}^*(K_P^{F_\bu})$ is the trivial line bundle on\/ $\cS^1$ with fibre\/ $K_P^{F_\bu}\vert_{[\nabla_P]},$ and so it has a canonical trivial square root up to isomorphism, which is a point of\/ $(\Ga_{P\t\cS^1,P}\ci\Pi_P)^*\ab(M^{F_\bu}_P)\vert_{[\nabla_P]}$. Thus the right hand side of\/ \eq{os5eq5} is also canonically trivial. Equation \eq{os5eq5} identifies these canonical trivializations.
\item[{\bf(ii)}] Suppose\/ $Q_1,Q_2\ra X\t\cS^1$ and\/ $P_1,P_2\ra X$ are principal\/ {\rm$\U(m_1)$-, $\U(m_2)$-}bundles with\/ $P_a\cong Q_a\vert_{X\t\{1\}}$ for\/ $a=1,2,$ so that\/ $Q_1\op Q_2\ra X\t\cS^1$ and\/ $P_1\op P_2\ra X$ are principal\/ $\U(m_1+m_2)$-bundles. Then the following diagram of principal\/ $\Z_2$-bundles over\/ $\B_{Q_1}\t\B_{Q_2}$ commutes:
\e
\begin{gathered}
\!\!\!\!\!\!\!\!
\xymatrix@!0@C=270pt@R=47pt{
& *+[l]{\begin{subarray}{l}\ts \Ga_{Q_1,P_1}^*(M^{F_\bu}_{P_1})\bt \Ga_{Q_2,P_2}^*(M^{F_\bu}_{P_2})\\
\ts =(\Ga_{Q_1,P_1}\t\Ga_{Q_2,P_2})^*(M^{F_\bu}_{P_1}\bt M^{F_\bu}_{P_2}) \end{subarray}}
 \ar[dd]_{(\Ga_{Q_1,P_1}\t\Ga_{Q_2,P_2})^*(\chi_{P_1,P_2}^{F_\bu})} 
\\
*+[r]{\begin{subarray}{l}\ts
 \bigl(\check O_{Q_1}^{E_\bu}\!\ot\!\Nu_{Q_1,P_1}^*(\check O_{P_1\t\cS^1}^{E_\bu})\bigr)\bt \\ \ts\;\> \bigl(\check O_{Q_2}^{E_\bu}\!\ot\!\Nu_{Q_2,P_2}^*(\check O_{P_2\t\cS^1}^{E_\bu})\bigr)\end{subarray}} \ar[dd]^{\begin{subarray}{l}\check\phi_{Q_1,Q_2}^{E_\bu}\ot 
(\Nu_{Q_1,P_1}\t\Nu_{Q_2,P_2})^*\\ (\check\phi^{E_\bu}_{P_1\t\cS^1,P_2\t\cS^1})\end{subarray}}
\ar@/^1pc/[ur]^(0.4){\ga_{Q_1,P_1}^{F_\bu}\bt\ga_{Q_2,P_2}^{F_\bu}\qquad}
\\
& *+[l]{\begin{subarray}{l}
\ts (\Ga_{Q_1,P_1}\t\Ga_{Q_2,P_2})^*\ci\Chi_{P_1,P_2}^*(M^{F_\bu}_{P_1\op P_2})\\
\ts =(\Ga_{Q_1\op Q_2,P_1\op P_2}\ci\Phi_{Q_1,Q_2})^*(M^{F_\bu}_{P_1\op P_2})
\end{subarray}}
\\
*+[r]{\begin{subarray}{l}\ts\Phi_{Q_1,Q_2}^*\bigl(\check O_{Q_1\op Q_2}^{E_\bu}\!\ot\! \Nu_{Q_1\op Q_2,P_1\op P_2}^*(\check O_{(P_1\op P_2)\t\cS^1}^{E_\bu})\bigr) \\
\ts =\Phi_{Q_1,Q_2}^*(\check O_{Q_1\op Q_2}^{E_\bu})\!\ot\! (\Nu_{Q_1,P_1}\!\t\!\Nu_{Q_2,P_2})^*\Phi_{P_1\t\cS^1,P_2\t\cS^1}^*(\check O_{(P_1\op P_2)\t\cS^1}^{E_\bu}).\end{subarray}} 
\ar@/_1pc/[ur]_(0.77){\qquad\Phi_{Q_1,Q_2}^*(\ga_{Q_1\op Q_2,P_1\op P_2}^{F_\bu})} }\!\!\!\!\!\!\!
\end{gathered}
\label{os5eq6}
\e
Here\/ $\check\phi_{Q_1,Q_2}^{E_\bu},\check\phi^{E_\bu}_{P_1\t\cS^1,P_2\t\cS^1}$ and\/ $\chi_{P_1,P_2}^{F_\bu}$ are as in \eq{os2eq22} and\/~\eq{os5eq3}.
\end{itemize}
\end{thm}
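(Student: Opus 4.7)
The plan is to construct $\ga_{Q,P}^{F_\bu}$ fiberwise over $\B_Q$, show this is continuous, and then verify properties (i) and (ii). The underlying principle is a general relation between real determinants of \emph{suspension} operators and complex determinants of \emph{loop families}: for an antilinear self-adjoint complex family of operators $\{D^{\nabla_{\Ad(P(\th))}}\}_{\th\in\cS^1}$ on $X$, the real determinant line of the suspension $\ti D^{\nabla_{\Ad(Q)}}$ on $X\t\cS^1$ factors through a \emph{square} of a complex line naturally associated to the loop $\ga=\Ga_{Q,P}([\nabla_Q])$ and carrying the same square-root $\Z_2$-torsor as $\ga^*(K_P^{F_\bu})$. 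Real orientations of this square factor therefore correspond bijectively to complex square roots of $\ga^*(K_P^{F_\bu})$, which is exactly the content of \eq{os5eq4} after accounting for the trivializing contribution of the pull-back factor~$\Nu_{Q,P}^*(\check O_{P\t\cS^1}^{E_\bu})$.

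The natural starting point is the pull-back case $\nabla_Q = \pi_X^*(\nabla_P)$, corresponding to a constant loop. Fourier decomposition in $\th$ shows that $\Ker\ti D^{\pi_X^*(\nabla_{\Ad(P)})}$ and $\Coker\ti D^{\pi_X^*(\nabla_{\Ad(P)})}$ coincide with those of $D^{\nabla_{\Ad(P)}}$ embedded as $\th$-constant sections, with all higher Fourier modes contributing invertible summands (since $\ti D^*\ti D = D^*D - \pd_\th^2$ shifts eigenvalues by $n^2$). Hence $\det_\R\ti D^{\pi_X^*(\nabla_{\Ad(P)})} \cong \det_\R D^{\nabla_{\Ad(P)}}$ canonically, and the complex structure on $D^{\nabla_{\Ad(P)}}$ furnishes a canonical real orientation. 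Simultaneously the constant loop pulls back $K_P^{F_\bu}$ to the trivial line bundle on $\cS^1$ with canonical trivial square root. Matching these two canonical classes defines $\ga_{Q,P}^{F_\bu}$ on the image of $\Pi_P$ appearing in property (i).

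For general $[\nabla_Q]\in\B_Q$, I would use the elliptic boundary value machinery of Appendix \ref{osA}. Cutting $X\t\cS^1$ at $\th=0\sim 2\pi$ presents $\ti D^{\nabla_{\Ad(Q)}}$ as an elliptic boundary value problem on $X\t[0,2\pi]$ with clutching data encoding the holonomy of $\ga$. The gluing formula for determinants factors
\begin{equation*}
\det_\R\ti D^{\nabla_{\Ad(Q)}} \cong \det_\R(\text{BVP piece}) \ot_\R L_\ga^{\ot 2},
\end{equation*}
where the BVP piece is trivialized by the straight-line deformation $t\mapsto \pi_X^*(\nabla_P) + t(\nabla_Q - \pi_X^*(\nabla_P))$ to the pull-back case above, and $L_\ga$ is a complex line built from the loop holonomy. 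Antilinear self-adjointness of $F_\bu$ is precisely what ensures the clutching contribution is a genuine real square of a complex line, and a direct inspection shows this complex line carries the same square-root $\Z_2$-torsor as $\ga^*(K_P^{F_\bu})$. This yields the fiberwise bijection, and continuity in $[\nabla_Q]$ is inherited from the smooth dependence of Quillen determinants and boundary value problems on parameters.

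Property (ii) follows from the multiplicativity of determinants under direct sums: the decomposition $\Ad(Q_1\op Q_2) = \Ad(Q_1)\op\Ad(Q_2)\op\rho_\C(Q_1\t_{X\t\cS^1}Q_2)\op\ov{\rho_\C}(Q_1\t_{X\t\cS^1}Q_2)$ from the analogue of \S\ref{os45} for $X\t\cS^1$ gives compatible factorizations on both sides of \eq{os5eq6}, with the mixed terms accounting for the $(L_{P_1\t_XP_2,\rho_\C}^{F_\bu})^{\ot 2}$ correction in $\chi_{P_1,P_2}^{F_\bu}$. The main obstacle will be making the gluing formula above canonical and functorial in $[\nabla_Q]$: identifying the $\Z_2$-torsor of real orientations \emph{precisely} with the $\Z_2$-torsor of complex square roots (rather than one differing by a nontrivial global twist on $\B_Q$) requires the family-index version of the boundary value theory from Appendix \ref{osA} together with careful Quillen-type sign conventions, and relies essentially on the antilinear self-adjointness hypothesis, without which the clutching contribution would fail to factor as a real square and the entire identification would collapse.
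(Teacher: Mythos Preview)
Your proposal captures the correct intuition---suspension, cutting $\cS^1$, elliptic boundary value problems, and the role of antilinear self-adjointness in producing square structures---but the central step contains a genuine gap.

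The claimed factorization $\det_\R\ti D^{\nabla_{\Ad(Q)}} \cong \det_\R(\text{BVP piece}) \ot_\R L_\ga^{\ot 2}$ is not a standard gluing formula, and you have not said what $L_\ga$ is or why the clutching contribution is the \emph{real} square of a \emph{complex} line. Gluing formulas for determinants along a hypersurface produce correction terms from the boundary operator, but not in this squared form; extracting a square requires precisely the spectral analysis you defer to ``direct inspection.'' Relatedly, your straight-line deformation of the BVP piece to the pull-back case changes the boundary operator $A_0$, so a fixed APS-type boundary condition need not remain elliptic throughout the homotopy; eigenvalues of $D^{\nabla_{\Ad(Q)}\vert_{X\t\{e^{i\th}\}}}$ can cross any chosen level $\la$ as $\th$ varies, and this spectral flow is exactly what encodes the nontrivial $\Z_2$-torsor you are trying to identify.

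The paper handles this by introducing \emph{cuts} $(\th_1,\ldots,\th_k,\la_1,\ldots,\la_k)$: one subdivides $\cS^1$ into enough intervals that on each $[\th_j,\th_{j+1}]$ some level $\la_j>0$ avoids the spectrum of $D^{\nabla_{\Ad(Q)}\vert_{X\t\{e^{i\th}\}}}$ throughout. This yields an intermediate $\Z_2$-torsor $\bigot_{j=1}^k O^{\la_{j-1},\la_j}_{Q\vert_{e^{i\th_j}}}$, the orientation torsor of the finite-dimensional eigenspace between $\la_{j-1}$ and $\la_j$ at each cut point. Proposition~\ref{os7prop1} identifies this with the left side of \eq{os5eq4} by deforming through a family of elliptic boundary conditions (not a single one) and applying Proposition~\ref{osAprop2} to track the finite-rank jumps; Proposition~\ref{os7prop2} identifies it with the right side by using \eq{os7eq18} to build canonical local square roots of $K_P^{F_\bu}$ over each $V^{\la_j}$, then observing via Lemma~\ref{os7lem3} that the patching ambiguity at $e^{i\th_j}$ is exactly $O^{\la_{j-1},\la_j}_{Q\vert_{e^{i\th_j}}}$. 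Independence of the cut is then checked under insertions and deletions. Your single-cut approach with one spectral level cannot avoid crossings in general, and without the intermediate eigenspace torsors you have no mechanism to match orientations with square-root choices.
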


\begin{rem}
\label{os5rem1}	
{\bf(a)} The isomorphisms $\ga_{Q,P}^{F_\bu}$ satisfy more consistency conditions than Theorem \ref{os5thm2}(i),(ii) for Lie groups $G$ other than $\U(m)$. In particular, for any fixed $G$ there are complicated conditions on $\ga_{Q,P}^{F_\bu}$ relating to composing loops in $\Map_{C^0}(\cS^1,\B_P)$ with a common base point, which we will not explain, but which we would need if we wanted to prove an analogue of Theorem \ref{os5thm3} below for $G$-bundles. For $G=\U(m)$, these are implied by Theorem~\ref{os5thm2}(i),(ii).

\smallskip

\noindent{\bf(b)} There is also an analogue of Theorem \ref{os5thm2}(ii) for $\SU(m)$-bundles rather than $\U(m)$-bundles, involving analogues of Examples \ref{os2ex3} and \ref{os4ex3} for the inclusion $\SU(m_1)\t\SU(m_2)\hookra\SU(m_1+m_2)$, but we will not state it. Along these lines, there is also an analogue of Theorem \ref{os5thm3} below for $\SU(m)$-bundles.
\end{rem}

The next theorem will be proved in \S\ref{os8}. It is a useful tool for constructing spin structures on moduli spaces $\B_P$ for $P\ra X$ a $\U(m)$- or $\SU(m)$-bundle.

\begin{thm}
\label{os5thm3}
In the situation of Definitions\/ {\rm\ref{os5def1}} and\/ {\rm\ref{os5def2},} there are natural\/ {\rm 1-1} correspondences between the following choices of data:
\begin{itemize}
\setlength{\itemsep}{0pt}
\setlength{\parsep}{0pt}
\item[{\bf(a)}] A spin structure\/ $\hat\si_{X\t\U(N)}^{F_\bu}$ on\/ $\B_{X\t\U(N)}$ for the trivial principal\/ $\U(N)$-bundle\/ $X\t\U(N)\ra X,$ for some fixed\/ $N$ with\/~$2N\ge n+2$.
\item[{\bf(b)}] Spin structures\/ $\si_P^{F_\bu}$ on\/ $\B_P$ for all principal\/ $\U(m)$-bundles\/ $P\ra X$ for all\/ $m\ge 0,$ such that if\/ $P_1\ra X,$ $P_2\ra X$ are principal\/ $\U(m_1)$- and\/ $\U(m_2)$-bundles, then Example\/ {\rm\ref{os4ex3}} maps from spin structures on\/ $\B_{P_1\op P_2}$ to pairs of spin structures on\/ $\B_{P_1},\B_{P_2},$ and this maps\/ $\si_{P_1\op P_2}^{F_\bu}\mapsto(\si_{P_1}^{F_\bu},\si_{P_2}^{F_\bu})$.
\item[{\bf(c)}] Trivializations\/ $\check\Om^{E_\bu}_Q:\check O_Q^{E_\bu}\ot_{\Z_2}\Nu_{Q,P}^*(\check O_{P\t\cS^1}^{E_\bu})\cong\B_Q\t\Z_2$ whenever\/ $P\ra X,$ $Q\ra X\t\cS^1$ are principal\/ $\U(m)$-bundles with an isomorphism\/ $Q\vert_{X\t\{1\}}\cong P,$ for all\/ $m\ge 0,$ such that:
\begin{itemize}
\setlength{\itemsep}{0pt}
\setlength{\parsep}{0pt}
\item[{\bf(i)}] Take\/ $Q=P\t\cS^1,$ and pull back\/ $\check\Om^{E_\bu}_{P\t\cS^1}$ by\/ $\Pi_P:\B_P\ra\B_{P\t\cS^1},$ giving
\e
\begin{split}
&\Pi_P^*(\check\Om^{E_\bu}_{P\t\cS^1}):
\Pi_P^*(\check O_{P\t\cS^1}^{E_\bu})\ot_{\Z_2}(\Nu_{P\t\cS^1,P}\ci\Pi_P)^*(\check O_{P\t\cS^1}^{E_\bu})\\
&\qquad\qquad\longra\Pi_P^*(\B_{P\t\cS^1}\t\Z_2)\cong \B_P\t\Z_2.
\end{split}
\label{os5eq7}
\e
As\/ $\Nu_{P\t\cS^1,P}\ci\Pi_P=\Pi_P,$ the left hand side is\/ $\Pi_P^*(\check O_{P\t\cS^1}^{E_\bu})^{\ot^2},$ and so is canonically trivial, as is the right hand side. Then\/ $\Pi_P^*(\check\Om^{E_\bu}_{P\t\cS^1})$ identifies the canonical trivializations.
\item[{\bf(ii)}] Given\/ $P_1,P_2\ra X,$ $Q_1,Q_2\ra X\t\cS^1,$ the isomorphism\/ $\check\phi_{Q_1,Q_2}^{E_\bu}\ot (\Nu_{Q_1,P_1}\t\Nu_{Q_2,P_2})^*(\check\phi^{E_\bu}_{P_1\t\cS^1,P_2\t\cS^1})$ in \eq{os5eq6} identifies the trivializations induced by\/ $\check\Om^{E_\bu}_{Q_1},\check\Om^{E_\bu}_{Q_2}$ and\/~$\check\Om^{E_\bu}_{Q_1\op Q_2}$.
\end{itemize}
\item[{\bf(d)}] The same as\/ {\bf(c)\rm,} but restricted to\/ $P,Q$ such that\/ $P=X\t\U(m)$ is trivial.
\end{itemize}

Note that n-orientations on\/ $\B_Q,\B_{P\t\cS^1}$ induce trivializations\/ $\check\Om^{E_\bu}_Q$ in\/ {\bf(c)}. Thus, if we can construct canonical n-orientations on\/ $\B_Q$ for all\/ $\U(m)$-bundles\/ $Q\ra X\t\cS^1$ compatible with\/ $\check\phi_{Q_1,Q_2}^{E_\bu}$ in\/ \eq{os2eq22} \textup(see Theorem\/ {\rm\ref{os2thm1}} and\/ {\rm\cite{JTU,JoUp1}} for results of this kind\textup) then by\/ {\bf(b)} we obtain canonical spin structures\/ $\si_P^{F_\bu}$ on\/ $\B_P$ for all principal\/ $\U(m)$-bundles\/ $P\ra X$. Example\/ {\rm\ref{os4ex1}} then also gives canonical spin structures\/ $\si_Q^{F_\bu}$ on\/ $\B_Q$ for all principal\/ $\SU(m)$-bundles\/ $Q\ra X$.
\end{thm}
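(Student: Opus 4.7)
The plan is to establish the cycle
\begin{equation*}
\mathrm{(a)} \Longleftrightarrow \mathrm{(b)} \Longrightarrow \mathrm{(c)} \Longrightarrow \mathrm{(d)} \Longrightarrow \mathrm{(a)}
\end{equation*}
and verify that the composed round-trips are the identity. The equivalence $\mathrm{(a)}\!\Leftrightarrow\!\mathrm{(b)}$ is immediate from Theorem~\ref{os5thm1}: restriction to $X\t\U(N)$ gives $\mathrm{(b)}\!\Rightarrow\!\mathrm{(a)}$, while $\mathrm{(a)}\!\Rightarrow\!\mathrm{(b)}$ is the content of that theorem, since its item~(c) contains the direct-sum axiom of~$\mathrm{(b)}$. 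For $\mathrm{(b)}\!\Rightarrow\!\mathrm{(c)}$, given $\si_P^{F_\bu}=[(K_P^{F_\bu})^{1/2},\io]$, pullback along any loop $\ga\in\Map_{C^0}(\cS^1,\B_P)$ yields a canonical square root $\ga^*((K_P^{F_\bu})^{1/2})$ of $\ga^*(K_P^{F_\bu})$, assembling to a global section of $M_P^{F_\bu}\ra\Map_{C^0}(\cS^1,\B_P)$. Pulling back along $\Ga_{Q,P}$ and transporting across the isomorphism $\ga_{Q,P}^{F_\bu}$ of Theorem~\ref{os5thm2} yields the trivialization $\check\Om_Q^{E_\bu}$. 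Axiom~(c)(i) then follows from Theorem~\ref{os5thm2}(i), since on a constant loop at $[\nabla_P]$ the pulled-back square root \emph{is} the canonical trivial square root of $K_P^{F_\bu}|_{[\nabla_P]}$; axiom~(c)(ii) combines Theorem~\ref{os5thm2}(ii) with the direct-sum compatibility of $\si_{P_1\op P_2}^{F_\bu}$ via $\phi^{F_\bu}_{P_1,P_2}$ in~\eqref{os4eq10}.

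The map $\mathrm{(c)}\!\Rightarrow\!\mathrm{(d)}$ is restriction. The key step is $\mathrm{(d)}\!\Rightarrow\!\mathrm{(a)}$: setting $P=X\t\U(N)$, one reconstructs a square root of $K_P^{F_\bu}$ over $\B_P$ by computing its monodromy around each loop. Every continuous loop $\ga:\cS^1\to\B_P$ arises as $\Ga_{Q,P}([\nabla_Q])$ for some $(Q,\nabla_Q)$: lift $\ga$ to a path $\th\mapsto\nabla_P(\th)$ in $\A_P$ ending at $g\cdot\nabla_P(0)$ for some $g\in\G_P$, and form $Q\ra X\t\cS^1$ by clutching via~$g$ (so that $Q|_{X\t\{1\}}\cong P$ automatically). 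Transporting $\check\Om_Q^{E_\bu}|_{[\nabla_Q]}$ across $\ga_{Q,P}^{F_\bu}$ produces a distinguished square root of $\ga^*(K_P^{F_\bu})$, and comparing with the parallel transport of a fixed square root at a basepoint $[\nabla_P^0]\in\B_P$ yields a sign $\epsilon_\ga\in\{\pm1\}$. By Proposition~\ref{os4prop4} (applicable since $2N\ge n+2$), the resulting homomorphism $\pi_1(\B_P)\cong K^1(X)\to\Z_2$ determines a unique spin structure on~$\B_P$.

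The principal obstacle is to verify that the construction $\mathrm{(d)}\!\Rightarrow\!\mathrm{(a)}$ is well-defined and produces a genuine spin structure. One must establish: (i) independence of $\epsilon_\ga$ from the choice of lift $(Q,\nabla_Q)$; (ii) dependence only on $[\ga]\in\pi_1(\B_P)$; and (iii) the multiplicativity $\epsilon_{\ga_1\ga_2}=\epsilon_{\ga_1}\epsilon_{\ga_2}$. Items~(i) and~(ii) are expected to follow by connecting any two lifts through a continuous family $(Q',\nabla_{Q'})$, using continuity of $\check\Om_{Q'}^{E_\bu}$ and condition~(c)(i) to pin down the constant-loop sector. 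The deepest step is~(iii): under the identification $\pi_1(\B_P)\cong K^1(X)$ from Proposition~\ref{os4prop4}, group multiplication in $K^1(X)$ corresponds stably to block-sum $\U(N)\t\U(N)\hookra\U(2N)$, so concatenation of loops is realized by the direct-sum morphism $\Phi_{P,P}$ of~\eqref{os2eq21}, and multiplicativity is then expected to follow from condition~(c)(ii) applied to~\eqref{os5eq6}. Once (i)--(iii) are in place, the four round-trips are the identity by tracing canonical square roots at each step, and the concluding remark of the theorem follows by plugging $\check\Om^{E_\bu}_Q=\check\om_Q^{E_\bu}\ot\Nu_{Q,P}^*(\check\om_{P\t\cS^1}^{E_\bu})$, arising from canonical n-orientations on all $\B_Q$, into the correspondence.
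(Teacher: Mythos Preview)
Your overall architecture---the cycle $(\mathrm a)\Leftrightarrow(\mathrm b)\Rightarrow(\mathrm c)\Rightarrow(\mathrm d)\Rightarrow(\mathrm a)$, with $(\mathrm a)\Leftrightarrow(\mathrm b)$ from Theorem~\ref{os5thm1} and $(\mathrm b)\Rightarrow(\mathrm c)$ via Theorem~\ref{os5thm2}---matches the paper's. The paper organizes the remaining steps slightly differently (it proves $(\mathrm b)\Leftrightarrow(\mathrm c)$ directly and then deduces $(\mathrm c)\Leftrightarrow(\mathrm d)$ by restriction), but your cycle is logically equivalent.

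There is, however, a genuine gap in your $(\mathrm d)\Rightarrow(\mathrm a)$ step. A group morphism $\pi_1(\B_P)\to\Z_2$ determines only a principal $\Z_2$-bundle on $\B_P$, not a square root of $K_P^{F_\bu}$; Proposition~\ref{os4prop4} says the set of spin structures is a \emph{torsor} over $\Hom(K^1(X),\Z_2)$, not that this group parametrizes them absolutely. Your definition of $\epsilon_\ga$ via ``comparing with the parallel transport of a fixed square root at a basepoint'' is not well-defined: a square root of the one-dimensional fibre $K_P^{F_\bu}|_{[\nabla_P^0]}$ carries no canonical connection along $\ga$, and restricting any square root of $\ga^*(K_P^{F_\bu})$ to a single point of $\cS^1$ cannot distinguish the two isomorphism classes. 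In effect, the data in $(\mathrm d)$ hands you, for each loop $\ga$, a distinguished point of the $\Z_2$-torsor $M_P^{F_\bu}|_\ga$, but you have no canonical second point to compare it against.

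The paper repairs this by working instead with $\pi_1(\dot K_P^{F_\bu})$, where $\dot K_P^{F_\bu}=K_P^{F_\bu}\setminus 0(\B_P)$ is the $\C^*$-bundle. A loop $\de$ in $\dot K_P^{F_\bu}$ projects to a loop $\bar\de$ in $\B_P$ \emph{and} supplies a nonvanishing section of $\bar\de^*(K_P^{F_\bu})$, hence a trivialization, hence a canonical point $m_\de\in M_P^{F_\bu}|_{\bar\de}$. Transporting $m_\de$ through $\ga_{Q_{\bar\de},P}^{F_\bu}$ and $\check\Om_{Q_{\bar\de}}^{E_\bu}$ then gives a well-defined $\la_P([\de])\in\Z_2$. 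The condition that $\la_P$ restrict to $n\mapsto(-1)^n$ on the fibre subgroup $\mu_P(\Z)\subset\pi_1(\dot K_P^{F_\bu})$ is exactly what characterizes double covers of $\dot K_P^{F_\bu}$ arising from square roots of $K_P^{F_\bu}$; the paper verifies this using the loops $\de_n(e^{i\th})=e^{in\th}x_0$ and Theorem~\ref{os5thm2}(i). Multiplicativity of $\la_P$ is then established via the concrete isomorphism $Q_{\bar\ep_1}\oplus Q_{\bar\ep_2}\cong Q_{\bar\de_0}\oplus Q_{\bar\ep_{12}}$, built by a ``rotation'' of the two $P$-summands---this is the geometric incarnation of your correct intuition that concatenation in $\pi_1$ corresponds stably to block-sum, but it requires this explicit bundle isomorphism together with condition~(c)(ii) to push through.
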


Here (a),(b) are equivalent by Theorem \ref{os5thm1}. We can map from data (b) to data (c) using Theorem \ref{os5thm2}. If we are given spin structures $\si_P^{F_\bu}$ as in (b), then for each $\ga\in\Map_{C^0}(\cS^1,\B_P)$, $\ga^*(\si_P^{F_\bu})$ is a square root of $\ga^*(K_P^{F_\bu})$, and thus a point of $M^{F_\bu}_P\vert_\ga$. This defines a trivialization of $M^{F_\bu}_P\ra \Map_{C^0}(\cS^1,\B_P)$. Hence \eq{os5eq4} induces a trivialization of $\check O_Q^{E_\bu}\ot_{\Z_2}\Nu_{Q,P}^*(\check O_{P\t\cS^1}^{E_\bu})$, as in (c). Theorem \ref{os5thm2}(i),(ii) imply (c)(i),(ii). Clearly data (c) restricts to data (d). The point of the proof of Theorem \ref{os5thm3} is to show these maps (b) $\Ra$ (c) $\Ra$ (d) are bijections.

\subsection{Background on flag structures on 7-manifolds}
\label{os53}

We recall some material from the authors \cite{Joyc3,JoUp1}, starting with~\cite[\S 3.1]{Joyc3}. 

\begin{dfn}
\label{os5def3}
Let $Y$ be an oriented 7-manifold, and consider pairs $(Z,s)$ of a compact, oriented, immersed 3-submanifold $Z\hookra Y$, and a non-vanishing section $s$ of the normal bundle $N_Z$ of $Z$ in $Y$.
We call $(Z,s)$ a {\it flagged submanifold\/} in $Y$. For non-vanishing sections $s, s'$ of $N_Z$ define
\begin{equation*}
d(s,s') \coloneqq Z \bullet \bigl\{ t\cdot s(y) + (1-t)\cdot s'(y) \enskip\big|\enskip t\in [0,1],\enskip y\in Z\bigr\} \in \Z,
\end{equation*}
using the intersection product `$\bu$' between a $3$-cycle and a $4$-chain
whose boundary does not meet the cycle, see Dold~\cite[(13.20)]{Dold}.

Let $(Z_1,s_1),(Z_2,s_2)$ be disjoint flagged submanifolds with $[Z_1]=[Z_2]$ in $H_3(Y,\Z)$. Choose an integral 4-chain $C$ with $\pd C=Z_2-Z_1$. Let $Z_1',Z_2'$ be small perturbations of $Z_1,Z_2$ in the normal directions $s_1,s_2$. Then $Z_1'\cap Z_1=Z_2'\cap Z_2=\es$ as $s_1,s_2$ are non-vanishing, and $Z_1'\cap Z_2=Z_2'\cap Z_1=\es$ as $Z_1,Z_2$ are disjoint and $Z_1',Z_2'$ are close to $Z_1,Z_2$. Define $D((Z_1,s_1),(Z_2,s_2))$ to be the intersection number $(Z_2'-Z_1')\bu C$ in homology over $\Z$. Here we regard
\begin{equation*}
[C] \in H_4(Y,Z_1\cup Z_2;\Z),\qquad
[Z'_1], [Z'_2] \in H_3(Z'_1\cup Z'_2,\emptyset;\Z).
\end{equation*}
Note that since $Z'_1, Z'_2$ are small perturbations and $Z_1, Z_2$ are disjoint we have $(Z_1 \cup Z_2) \cap (Z'_1 \cup Z'_2) = \emptyset$.
This is independent of the choices of $C$ and~$Z_1',Z_2'$.
\end{dfn}

In \cite[Prop.s~3.3 \& 3.4]{Joyc3} we show that if $(Z_1,s_1),(Z_2,s_2),\ab(Z_3,s_3)$ are disjoint flagged submanifolds with $[Z_1]=[Z_2]=[Z_3]$ in $H_3(Y,\Z)$ then
\e
\label{os5eq8}
\begin{split}
D((Z_1,s_1),(Z_3,s_3))\!\equiv\! D((Z_1,s_1),(Z_2,s_2))\!+\!D((Z_2,s_2),(Z_3,s_3))\!\mod\! 2,
\end{split}
\e
and if $(Z',s')$ is any small deformation of $(Z,s)$ with $Z,Z'$ disjoint then
\e
D((Z,s),(Z',s'))\equiv 0\mod 2.
\label{os5eq9}
\e

\begin{dfn}
\label{os5def4}
A {\it flag structure\/} on $Y$ is a map
\begin{equation*}
F:\bigl\{\text{flagged submanifolds $(Z,s)$ in $Y$}\bigr\}\longra\{\pm 1\},
\end{equation*}
satisfying:
\begin{itemize}
\setlength{\itemsep}{0pt}
\setlength{\parsep}{0pt}
\item[(i)] $F(Z,s) = F(Z,s')\cdot (-1)^{d(s,s')}$.
\item[(ii)] If $(Z_1,s_1),(Z_2,s_2)$ are disjoint flagged submanifolds in $Y$ with $[Z_1]=[Z_2]$ in $H_3(Y,\Z)$ then
\e
F(Z_2,s_2)=F(Z_1,s_1)\cdot (-1)^{D((Z_1,s_1),(Z_2,s_2))}.
\label{os5eq10}
\e
This is a well behaved condition by \eq{os5eq8}--\eq{os5eq9}.
\item[(iii)] If $(Z_1,s_1),(Z_2,s_2)$ are disjoint flagged submanifolds then
\end{itemize}
\e
F(Z_1\amalg Z_2,s_1\amalg s_2)=F(Z_1,s_1)\cdot F(Z_2,s_2).
\label{os5eq11}
\e
\end{dfn}

Here is \cite[Prop.~3.6]{Joyc3}:

\begin{prop}
\label{os5prop1}
Let\/ $Y$ be an oriented\/ $7$-manifold. Then:
\begin{itemize}
\setlength{\itemsep}{0pt}
\setlength{\parsep}{0pt}
\item[{\bf(a)}] There exists a flag structure\/ $F$ on\/ $Y$.
\item[{\bf(b)}] If\/ $F,F'$ are flag structures on\/ $Y$ then there exists a unique group morphism\/ $H_3(Y,\Z)\ra\{\pm 1\},$ denoted\/ $F'/F,$ such that
\e
F'(Z,s)=F(Z,s)\cdot (F'/F)[Z] \qquad\text{for all\/ $(Z,s)$.}
\label{os5eq12}
\e
\item[{\bf(c)}] Let\/ $F$ be a flag structure on\/ $Y$ and\/ $\ep:H_3(Y,\Z)\ra\{\pm 1\}$ a morphism, and define\/ $F'$ by \eq{os5eq12} with\/ $F'/F=\varepsilon$. Then\/ $F'$ is a flag structure on\/ $Y$.
\end{itemize}
Hence the set of flag structures on\/ $Y$ is a torsor over\/ $\Hom\bigl(H_3(Y,\Z),\Z_2\bigr)$.
\end{prop}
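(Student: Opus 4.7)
Given flag structures $F,F'$, set $\varepsilon(Z,s):=F'(Z,s)\cdot F(Z,s)^{-1}\in\{\pm 1\}$. Axiom (i) applied to both $F,F'$ makes the $(-1)^{d(s,s')}$ factors cancel, so $\varepsilon(Z,s)$ depends only on $Z$. Axiom (ii) likewise shows $\varepsilon(Z_1)=\varepsilon(Z_2)$ for disjoint homologous pairs; for general homologous pairs I perturb to disjoint position (possible since $2\dim Z=6<7$) and invoke \eq{os5eq9} to see the perturbation contributes trivially modulo $2$. Every integral $3$-homology class on $Y$ is represented by a smooth oriented $3$-submanifold by Thom's theorem in this low codimension, and such a $Z$ admits a nonvanishing normal section because the Euler obstruction of the rank-$4$ normal bundle lies in $H^4(Z;\Z)=0$. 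Hence $\varepsilon$ descends to a function $H_3(Y,\Z)\to\{\pm 1\}$, and axiom (iii) applied to disjoint representatives of $[Z_1]$ and $[Z_2]$ makes it a group homomorphism. Uniqueness follows from \eq{os5eq12}.

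\textbf{Part (c).} Setting $F'(Z,s):=F(Z,s)\cdot\varepsilon([Z])$, axioms (i)--(iii) are one-line checks: the $\varepsilon$ factors cancel in (i) (since $[Z]$ is unchanged) and in (ii) (since $[Z_1]=[Z_2]$), and in (iii) the homomorphism property of $\varepsilon$ combines with the multiplicativity of $F$ to give what is required.

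\textbf{Part (a), with the main obstacle.} The plan is to fix flagged representatives $(Z_i,s_i)$ of a finite generating set of $H_3(Y,\Z)$, declare $F(Z_i,s_i):=+1$, and extend by axiom (ii): for $(Z,s)$ with $[Z]=\sum n_i[Z_i]$, build a reference $(Z^{\mathrm{ref}},s^{\mathrm{ref}})$ as a perturbed disjoint union of $|n_i|$ copies of $(Z_i,s_i)$ (orientation-reversed when $n_i<0$), arrange that $Z^{\mathrm{ref}}\cap Z=\emptyset$, and set $F(Z,s):=(-1)^{D((Z^{\mathrm{ref}},s^{\mathrm{ref}}),(Z,s))}$. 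Granting well-definedness, axioms (i)--(iii) follow directly: (i) from the intersection formula defining $d(s,s')$ matching the change in $D$ under a section change, (ii) from \eq{os5eq8} applied to the triple $(Z^{\mathrm{ref}},Z_1,Z_2)$, and (iii) from the fact that a reference for a disjoint union may be chosen as the disjoint union of references. The main obstacle is well-definedness: whenever the same class admits two reference constructions $(Z^{\mathrm{ref}}_a,s^{\mathrm{ref}}_a)$ for $a=1,2$ (arising from different integer presentations $[Z]=\sum n_i[Z_i]=\sum n'_i[Z_i]$, or from different choices of transverse perturbation), one must show $D((Z^{\mathrm{ref}}_1,s^{\mathrm{ref}}_1),(Z^{\mathrm{ref}}_2,s^{\mathrm{ref}}_2))\equiv 0\pmod 2$. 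This amounts to a cocycle/coboundary condition on $D\bmod 2$ on the set of flagged reference representatives of each homology class, and I expect it to be the technical heart of the argument, treated by combining \eq{os5eq8} (cocycle identity on triples) with \eq{os5eq9} (perturbation invariance), and using finite generation of $H_3(Y,\Z)$ to reduce the verification to a finite list of relations among the generators.
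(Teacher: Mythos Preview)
The paper does not prove this proposition itself; it is quoted as \cite[Prop.~3.6]{Joyc3}. The only information the paper offers about the argument appears in the proof of Proposition~\ref{os5prop2}, where it recalls from \cite{Joyc3} that a flag structure is determined by its values on a set of flags whose homology classes generate $H_3(Y,\Z)$ --- the same principle underlying your approach to~(a).

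Your arguments for (b) and (c) are correct and essentially complete. For (a), your outline matches the strategy alluded to above, but you have left the crucial step open. You need, for two reference constructions $(Z^{\mathrm{ref}}_a,s^{\mathrm{ref}}_a)$ of the same class, that $D\bigl((Z^{\mathrm{ref}}_1,s^{\mathrm{ref}}_1),(Z^{\mathrm{ref}}_2,s^{\mathrm{ref}}_2)\bigr)\equiv 0\pmod 2$. Equations \eq{os5eq8}--\eq{os5eq9} handle perturbation and give a cocycle identity, but they do not by themselves control what happens under relations among the generators: if $\sum_i m_i[Z_i]=0$, you must show that the corresponding disjoint union of signed copies of the $(Z_i,s_i)$, which is null-homologous, satisfies $D\bigl(\,\cdot\,,(\emptyset,\emptyset)\bigr)\equiv 0\pmod 2$. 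The missing ingredient is an additivity property of $D\bmod 2$ under disjoint unions, roughly $D(A\amalg B,\,A'\amalg B')\equiv D(A,A')+D(B,B')\pmod 2$ for flagged submanifolds with $[A]=[A']$ and $[B]=[B']$, which is exactly what makes axiom (iii) compatible with (ii) and hence what makes your extension well defined. This is a geometric statement about intersection numbers (the cross terms in the chain computation must vanish mod 2) that does not follow formally from \eq{os5eq8}--\eq{os5eq9}, and you cannot invoke axiom (iii) to obtain it since you are in the process of constructing $F$. So as written, (a) is an outline with the decisive lemma still missing.
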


The first author introduced flag structures to define canonical orientations on moduli spaces of associative 3-folds in compact 7-manifolds $(Y,\vp,g)$ with holonomy $G_2$, \cite[\S 3.2]{Joyc3}. He also conjectured \cite[Conj.~8.3]{Joyc3} that flag structures would be necessary to define orientations on moduli spaces of $G_2$-instantons on $(Y,\vp,g)$. This was proved by the authors in \cite{JoUp1}. 
Here is a version of~\cite[Th.~1.2]{JoUp1}:

\begin{thm}
\label{os5thm4}
Suppose\/ $(Y,g)$ is a compact, oriented, spin Riemannian\/ $7$-manifold, and take\/ $E_\bu$ to be the Dirac operator\/ $\slashed{D}:\Ga^\iy(S)\ra\Ga^\iy(S)$ on\/ $Y$. Fix a flag structure on\/ $Y,$ as in Definition\/ {\rm\ref{os5def4}}. Then for any principal\/ $\U(m)$- or $\SU(m)$-bundle\/ $P\ra Y,$ we can construct a canonical n-orientation\/ $\check\om_P^{E_\bu}$ on\/ $\B_P$.

Here if\/ $P_1,P_2\ra Y$ are\/ {\rm$\SU(m_1)$-, $\SU(m_2)$-}bundles then the n-orientations\/ $\check\om_{P_1}^{E_\bu},\check\om_{P_2}^{E_\bu},\check\om_{P_1\op P_2}^{E_\bu}$ on\/ $\B_{P_1},\B_{P_2},\B_{P_1\op P_2}$ are compatible by the analogue of Example\/ {\rm\ref{os2ex3}}. The same holds if\/ $P_1,P_2\ra Y$ are {\rm$\U(m_1)$-, $\U(m_2)$-}bundles with\/ $c_1(P_1)=c_1(P_2)=0$ in\/ $H^2(Y,\Z)$. However, for general\/ {\rm$\U(m_1)$-, $\U(m_2)$-}bundles\/ $P_1,P_2,$\/ $\check\om_{P_1}^{E_\bu},\check\om_{P_2}^{E_\bu},\check\om_{P_1\op P_2}^{E_\bu}$ may not be compatible under Example\/~{\rm\ref{os2ex3}}.
\end{thm}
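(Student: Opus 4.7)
The plan is to build the canonical n-orientations in three stages: prove orientability, fix a canonical n-orientation on a stable reference bundle, and then transport it by excision in a way that is controlled by the flag structure.

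Orientability of $\B_P$ for all principal $\U(m)$- or $\SU(m)$-bundles $P\ra Y$ is Walpuski's result \cite[Prop.~6.3]{Walp}: the obstruction lies in $H^1(\B_P;\Z_2)$, which, using the K-theoretic description of $\pi_1(\B_P)$ from \cite[\S 2.3.3]{JTU} together with the spin assumption on $Y$, can be shown to vanish by a families index argument. For the canonical reference, \S\ref{os23} gives a natural n-orientation on $\B_{Y\t\U(1)}$, and iterating via Example~\ref{os2ex3} (for the inclusion $\U(1)^N\hookra\U(N)$) produces a canonical n-orientation $\check\om_{Y\t\U(N)}^{E_\bu}$ on $\B_{Y\t\U(N)}$ for $N$ large. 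Using stabilization, this propagates to $\check\om_P^{E_\bu}$ on every $\U(m)$-bundle $P$ that becomes isomorphic to $Y\t\U(N)$ after adding trivial summands, and the $\SU(m)$ case is recovered via the analogue of Example~\ref{os2ex1}.

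The crucial step is to define $\check\om_P^{E_\bu}$ for topologically nontrivial $P$ and to verify the definition is consistent. Any two $\U(m)$-bundles $P,P'$ of equal rank on $Y$ differ, after stabilization, by an ``instanton insertion'' supported in a tubular neighbourhood of a closed oriented $3$-cycle $Z\subset Y$ equipped with a non-vanishing normal section $s$. The Excision Theorem \cite[Th.~2.13]{Upme} then identifies $\check O_P^{E_\bu}$ with $\check O_{P'}^{E_\bu}$ up to a sign, and the main geometric content is a local spectral-flow computation on the tube $Z\t D^4$ for the Dirac operator twisted by a basic $\SU(2)$-instanton, matching this excision sign with the flag-structure value $F(Z,s)$. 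The axioms (i)--(iii) of Definition~\ref{os5def4} are then exactly what is needed for the resulting assignment $P\mapsto\check\om_P^{E_\bu}$ to be independent of the chain of excisions: (i) handles changes of framing $s$, (ii) handles homologous cycles, and (iii) handles disjoint unions. The principal obstacle in the whole argument is precisely this matching of the excision sign with~$F(Z,s)$.

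Finally, compatibility with Example~\ref{os2ex3} under direct sums holds for $\SU(m)$-bundles because all excisions can be chosen to respect the fibre product structure and the central contributions are trivial. For general $\U(m)$-bundles, however, the isomorphism $\check\la_{P_1,P_2}^{E_\bu}$ of \eq{os2eq10} acquires an additional sign arising from a cross-term in $\ind_\R\slashed{D}^{\nabla_{\Ad(P_1\op P_2)}}-\ind_\R\slashed{D}^{\nabla_{\Ad(P_1)}}-\ind_\R\slashed{D}^{\nabla_{\Ad(P_2)}}$ proportional to a pairing of $c_1(P_1)\cup c_1(P_2)$ with a universal characteristic class of $Y$; this cross-term vanishes precisely when $c_1(P_1)=0$ or $c_1(P_2)=0$, which both explains the asserted compatibility in that case and identifies the mechanism of its failure in general.
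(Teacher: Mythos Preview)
This theorem is not proved in the present paper; it is quoted from \cite[Th.~1.2]{JoUp1}, and the only argument the paper supplies is the short paragraph after the statement explaining the last clause. So your task is really to match that explanation, and there your proposal diverges in a substantive way.

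The paper says the hard work in \cite{JoUp1} is done for $\SU(m)$-bundles first, producing n-orientations compatible with direct sums; the $\U(m)$ n-orientations are then \emph{derived} from the $\SU(m+1)$ case via Example~\ref{os2ex2} (the inclusion $\U(m)\hookra\SU(m+1)$), and the incompatibility with direct sums arises because Example~\ref{os2ex2} itself does not commute with direct sums. You have the logic reversed: you build the $\U(m)$ case first (via abelian $\U(1)^N$ and stabilization) and then deduce $\SU(m)$ from it using Example~\ref{os2ex1}. That reversal is not merely cosmetic---it is exactly why the paper's mechanism for the failure is structural (a non-commuting functor) rather than numerical.

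Your proposed mechanism for the failure---an index-theoretic cross-term proportional to a pairing involving $c_1(P_1)\cup c_1(P_2)$---is not what the paper gives, and it does not match the statement. Such a cross-term would vanish as soon as \emph{one} of $c_1(P_1),c_1(P_2)$ is zero, whereas the theorem asserts compatibility only when \emph{both} vanish. The correct reading is that $c_1(P_a)=0$ means $P_a$ lifts to an $\SU(m_a)$-bundle, so when both vanish one is back in the $\SU$ case where compatibility was built in; for general $\U(m)$-bundles one must pass through Example~\ref{os2ex2}, and that passage breaks the direct-sum compatibility. Your outline of the flag-structure/excision machinery for the $\SU$ case is in the right spirit, but the order of construction and the explanation of the last part should be corrected.
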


The reason for the last part is that the hard work in \cite{JoUp1} is to construct  n-orientations $\check\om_P^{E_\bu}$ for all $\SU(m)$-bundles $P\ra Y$, which are compatible with direct sums. Then n-orientations $\check\om_Q^{E_\bu}$ for $\U(m)$-bundles $Q\ra Y$ are induced by Example \ref{os2ex2}. However, Example \ref{os2ex2} does not commute with direct sums.

\subsection{\texorpdfstring{Canonical spin structures for $\slashed{D}_+$ on a 6-manifold}{Canonical spin structures for Dᕀ on a 6-manifold}}
\label{os54}

\begin{prop}
\label{os5prop2}
For every oriented\/ $6$-manifold\/ $X$ we can define a canonical
flag structure\/ $F_{\rm can}$ on\/ $Y=X\t\cS^1$.
\end{prop}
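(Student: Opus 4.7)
The plan is to exploit the canonical nowhere-vanishing vector field $\partial_\th$ on $Y=X\t\cS^1$ (the generator of the $\cS^1$-action on the second factor) to single out a preferred element of the $\Hom(H_3(Y,\Z),\Z_2)$-torsor of flag structures provided by Proposition~\ref{os5prop1}. For any flagged submanifold $(Z,s)$ such that $\partial_\th$ is nowhere tangent to $Z$ (a generic condition, since $\partial_\th$ is nowhere-zero and $\dim Z=3$), the image of $\partial_\th|_Z$ under $TY|_Z\twoheadrightarrow N_Z$ gives a distinguished nowhere-zero section $n_\th\in\Ga^\iy(N_Z)$. I will call such $(Z,s)$ \emph{transverse}.

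Fix any flag structure $F_0$ on $Y$ using Proposition~\ref{os5prop1}(a) and define a map $\ep:H_3(Y,\Z)\to\{\pm 1\}$ by $\ep([Z]):=F_0(Z,n_\th)$ for a transverse representative $Z$. Additivity $\ep(\al+\be)=\ep(\al)\ep(\be)$ follows by choosing disjoint transverse representatives for $\al,\be$ and applying axiom (iii) to $F_0$. Granting well-definedness, set
\[
F_{\rm can}:=F_0\cdot(\ep\ci[\cdot])^{-1}
\]
in the sense of Proposition~\ref{os5prop1}(c). Then $F_{\rm can}$ is a flag structure, and satisfies the intrinsic normalization $F_{\rm can}(Z,n_\th)=1$ for every transverse $(Z,n_\th)$; consequently a different initial choice $F_0'$ yields the same $F_{\rm can}$, so $F_{\rm can}$ depends only on $\partial_\th$.

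The main obstacle is to verify that $\ep$ is well-defined, which by axiom (ii) reduces to showing that for any two disjoint transverse $Z_1,Z_2$ with $[Z_1]=[Z_2]$ in $H_3(Y,\Z),$
\e
D\bigl((Z_1,n_{\th,1}),(Z_2,n_{\th,2})\bigr)\equiv 0\pmod 2.
\label{os5eqplan}
\e
In fact I plan to prove the stronger $D=0$ on the nose. The key step is to choose a $4$-chain $C$ with $\pd C=Z_2-Z_1$ whose outward collaring direction $\nu_i\in\Ga^\iy(N_{Z_i\hookra C})$ along $\pd C$ is pointwise linearly independent of $n_{\th,i}$ in $N_{Z_i\hookra Y}$. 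Such $\nu_i$ exists because proportionality with the nowhere-zero $n_{\th,i}$ is a codimension-$3$ condition in the rank-$4$ bundle $N_{Z_i\hookra Y}$ over the $3$-dimensional base $Z_i$, so generic $\nu_i$ avoids it; any such collar then extends to a global $C$ since $[Z_2]=[Z_1]$. With this choice, $\partial_\th\notin TZ_i\op\R\nu_i=TC|_{Z_i}$ pointwise, so $\partial_\th$ is nowhere tangent to $C$ along $\pd C$, and its projection to $N_{C\hookra Y}$ is nowhere zero there.

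A local half-space computation near $q\in Z_i\subset\pd C$ then gives the result. Writing coordinates so that $C=\{x_4\ge 0\}$, $Z_i=\{x_4=0\}$, and $N_{C\hookra Y}$ is spanned by $\pd_{x_5},\pd_{x_6},\pd_{x_7}$, the perturbation $Z_i'$ of $Z_i$ in the direction $n_{\th,i}$ has nonzero component in $N_{C\hookra Y}|_{Z_i}$ by construction, so for small $\ep$ the perturbed $Z_i'$ moves off $C$ near $\pd C$; for sufficiently small $\ep$ this gives $Z_i'\cap C=\es$ globally, so $D=(Z_2'-Z_1')\bu C=0$. This establishes~\eq{os5eqplan} and completes the construction of~$F_{\rm can}$.
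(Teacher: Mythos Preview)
Your approach is genuinely different from the paper's, and more conceptual. The paper uses the K\"unneth splitting $H_3(X\t\cS^1,\Z)\cong H_3(X,\Z)\oplus H_2(X,\Z)$ and prescribes $F_{\rm can}=1$ on two explicit families of generator flags---$(N\t\{1\},\pd_\th)$ for $N^3\hookra X$ and $(\Si\t\cS^1,\pi_\Si^*(t))$ for $\Si^2\hookra X$---then checks Definition~\ref{os5def4}(i)--(iii) separately on each family using explicit chains lying in $X\t\{1\}$ or of the form $C''\t\cS^1$. You instead impose the single normalization $F_{\rm can}(Z,n_\th)=1$ for \emph{all} flags transverse to $\pd_\th$; this subsumes the paper's type (a) (where $n_\th=\pd_\th$) and has the pleasant feature of working verbatim on any oriented $7$-manifold admitting a nowhere-zero vector field, not just products.

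There is, however, a genuine gap in your transversality count for $\nu_i$. Proportionality with $n_{\th,i}$ is codimension $3$ in the fibres of the rank-$4$ bundle $N_{Z_i\hookra Y}$, but the base $Z_i$ is $3$-dimensional, so a generic section meets this locus in a $0$-dimensional set---isolated points, not the empty set. At such points $n_{\th,i}$ is a multiple of $\nu_i$, and if the multiple is positive the perturbation $Z_i'$ moves \emph{into} $C$ there, so your local argument breaks down. Your conclusion that a $\nu_i$ everywhere independent of $n_{\th,i}$ exists is nevertheless correct, for a different reason: choosing such a $\nu_i$ is equivalent to finding a nowhere-zero section of the oriented rank-$3$ bundle $E=N_{Z_i\hookra Y}/\R n_{\th,i}$, and the obstruction $e(E)\in H^3(Z_i,\Z)$ is $2$-torsion (since $\rank E$ is odd) in the torsion-free group $H^3(Z_i,\Z)\cong\Z^{\#\text{components}}$ (Poincar\'e duality for the closed oriented $Z_i$), hence vanishes. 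With this repair your argument for $D=0$ goes through.
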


\begin{proof} We first prescribe $F_{\rm can}(Z,s)$ for flagged submanifolds $(Z,s)$ in $X\t\cS^1$ of two special kinds:
\begin{itemize}
\setlength{\itemsep}{0pt}
\setlength{\parsep}{0pt}
\item[(a)] Let $N\hookra X$ be a compact, oriented, immersed 3-submanifold of $X$. Then $Z=N\t\{1\}$ is a 3-submanifold in $X\t\cS^1$, and $s=\frac{\pd}{\pd\th}$ is a normal vector field to $Z$ in the $\cS^1$-direction in $X\t\cS^1$, where $\th$ is the local coordinate on $\cS^1\ni e^{i\th}$. We require that~$F_{\rm can}(N\t\{1\},\frac{\pd}{\pd\th})=1$.
\item[(b)] Let $\Si\hookra X$ be a compact, oriented, immersed 2-submanifold of $X$. Then $Z=\Si\t\cS^1$ is a 3-submanifold in $X\t\cS^1$. Let $t\in\Ga^\iy(N_\Si)$ be a nonvanishing normal vector field to $\Si$ in $X$. Then $s=\pi_\Si^*(t)$ is a nonvanishing normal vector field to $\Si\t\cS^1$ in $X\t\cS^1$, and $Z,s$ are invariant under the obvious action of $\U(1)=\cS^1$ on $X\t\cS^1$. We require that~$F_{\rm can}(\Si\t\cS^1,\pi_\Si^*(t))=1$.
\end{itemize}

We claim that there is a unique flag structure $F_{\rm can}$ on $Y=X\t\cS^1$ satisfying (a),(b). To see this, note as in the proof of \cite[Prop.~3.6]{Joyc3} that a flag structure on $Y$ is determined by its values on a set of flags $(Z,s)$ whose homology classes $[Z]\in H_3(Y,\Z)$ generate $H_3(Y,\Z)$. For $Y=X\t\cS^1$ we have $H_3(Y,\Z)\cong H_3(X,\Z)\op H_2(X,\Z)$, where the homology classes of flags $(Z,s)$ of types (a) and (b) generate $H_3(X,\Z)$ and $H_2(X,\Z)$ respectively. Therefore there exists a unique flag structure $F_{\rm can}$ satisfying (a),(b) if and only if the prescribed values (a),(b) are consistent with Definition \ref{os5def4}(i)--(iii) for flags of type~(a),(b).

For consistency with Definition \ref{os5def4}(i), in (b) suppose $t,t'\in\Ga^\iy(N_\Si)$ are nonvanishing normal vector fields to $\Si$ in $X$, and $s=\pi_\Si^*(t)$, $s'=\pi_\Si^*(t')$. As $\dim\Si=2$ and $\rank N_\Si=4$, we can choose a smooth family $t_a:a\in[0,1]$ of nonvanishing normal vector fields to $\Si$ in $X$ interpolating between $t_0=t$ and $t_1=t'$. Using this we can show that $d(s,s')=0$, so $(\Si\t\cS^1,\pi_\Si^*(t))$ and $(\Si\t\cS^1,\pi_\Si^*(t'))$ satisfy~(i).

For consistency with (ii), if $(Z_1,s_1)$, $(Z_2,s_2)$ are homologous flags of type (a), or type (b), by choosing a chain in $X$ with boundary $N_1-N_2$ or $\Si_1-\Si_2$ we can show that $D((Z_1,s_1),(Z_2,s_2))=0$, so \eq{os5eq10} holds. Consistency with (iii) is automatic as if $(Z_1,s_1),(Z_2,s_2)$ and $(Z_1\amalg Z_2,s_1\amalg s_2)$ are of type (a), or (b), then all three terms $F(\cdots)$ in \eq{os5eq11} are 1. The proposition follows.
\end{proof}

Combining Theorem \ref{os5thm4} and Proposition \ref{os5prop2} shows that if $(X,g)$ is a compact, oriented, spin Riemannian 6-manifold and $E_\bu$ is the Dirac operator on $X\t\cS^1$ then we have canonical n-orientations on $\B_Q$ for all $\U(m)$- or $\SU(m)$-bundles $Q\ra X\t\cS^1$. We will use this and Theorem \ref{os5thm3} to construct canonical spin structures $\B_P$ for all $\U(m)$- or $\SU(m)$-bundles~$P\ra X$.

We will use Theorem \ref{os5thm5} in the sequel \cite{JoUp2} to construct canonical `orientation data' in the sense of Kontsevich and Soibelman \cite[\S 5.2]{KoSo1} for any Calabi--Yau 3-fold $X$, solving a long-standing problem in Donaldson--Thomas theory.

\begin{thm}
\label{os5thm5}
Suppose\/ $(X,g)$ is a compact, oriented, spin Riemannian\/ $6$-manifold, and take\/ $F_\bu$ to be the positive Dirac operator\/ $\slashed{D}_+:\Ga^\iy(S_+)\ra\Ga^\iy(S_-),$ an antilinear self-adjoint complex linear elliptic operator. 

Then we can construct canonical choices of spin structures\/ $\si_P^{F_\bu}$ on\/ $\B_P$ for all principal\/ $\U(m)$-bundles\/ $P\ra X$ for all\/ $m\ge 0,$ such that if\/ $P_1\ra X,$\/ $P_2\ra X$ are principal\/ $\U(m_1)$- and\/ $\U(m_2)$-bundles, then Example\/ {\rm\ref{os4ex3}} maps from spin structures on\/ $\B_{P_1\op P_2}$ to pairs of spin structures on\/ $\B_{P_1},\B_{P_2},$ and this maps\/~$\si_{P_1\op P_2}^{F_\bu}\mapsto(\si_{P_1}^{F_\bu},\si_{P_2}^{F_\bu})$. 

We can also construct canonical spin structures\/ $\si_Q^{F_\bu}$ on\/ $\B_Q$ for all principal\/ $\SU(m)$-bundles\/ $Q\ra X$ for all\/ $m\ge 0,$ such that if\/ $P=(Q\t\U(m))/\SU(m)$ is the corresponding\/ $\U(m)$-bundle\/ $P\ra X$ then Example\/ {\rm\ref{os4ex1}} maps\/~$\si_P^{F_\bu}\mapsto\si_Q^{F_\bu}$.
\end{thm}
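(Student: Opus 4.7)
The plan is to reduce the six-dimensional spin-structure question to the seven-dimensional orientation question already solved in \cite{JoUp1}, via the bridge provided by Theorem \ref{os5thm3}. Set $Y=X\t\cS^1$, a compact, oriented, spin Riemannian $7$-manifold with the product metric and product spin structure. By Proposition \ref{os5prop2}, $Y$ carries a canonical flag structure $F_{\rm can}$, and Theorem \ref{os5thm4} applied to $(Y,F_{\rm can})$ with $E_\bu=\slashed{D}_Y$ then produces canonical n-orientations $\check\om_Q^{E_\bu}$ on $\B_Q$ for every principal $\U(m)$- or $\SU(m)$-bundle $Q\ra Y$. Before using this, one must identify the real elliptic operator $\ti D$ of Definition \ref{os5def2} with (the real part of) $\slashed{D}_Y$: the antilinear self-adjointness of $\slashed{D}_+$ in dimension $8k+6$ reflects an antilinear identification $\bar S_+\cong S_-$, and on the product $Y=X\t\cS^1$ Clifford multiplication by $d\th$ intertwines this isomorphism and becomes precisely the complex structure $i_{F_0}$ on the real spinor bundle $S_Y^{\R}\cong\pi_X^*(F)$ of real rank $8$. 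Hence $\ti D$ and $\slashed{D}_Y^{\R}$ share the same principal symbol up to continuous deformation, so by Remark \ref{os2rem1}(i) their n-orientation bundles on each $\B_Q$ are canonically isomorphic.

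Next I would apply Theorem \ref{os5thm3}(d)$\Ra$(b). For each trivial bundle $P=X\t\U(m)\ra X$ and each principal $\U(m)$-bundle $Q\ra Y$ with $Q\vert_{X\t\{1\}}\cong P$, combine the canonical n-orientations to define the trivialization
\[
\check\Om_Q^{E_\bu}:=\check\om_Q^{E_\bu}\ot_{\Z_2}\Nu_{Q,P}^*(\check\om_{P\t\cS^1}^{E_\bu})^{-1}:\check O_Q^{E_\bu}\ot_{\Z_2}\Nu_{Q,P}^*(\check O_{P\t\cS^1}^{E_\bu})\,{\buildrel\cong\over\longra}\,\B_Q\t\Z_2.
\]
Condition \ref{os5thm3}(c)(i) is immediate: for $Q=P\t\cS^1$ the equality $\Nu_{P\t\cS^1,P}\ci\Pi_P=\Pi_P$ shows that $\Pi_P^*(\check\Om_{P\t\cS^1}^{E_\bu})$ is the square of a single trivialization and hence agrees with the canonical one. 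For condition \ref{os5thm3}(c)(ii), the $\SU(m)$ case is immediate from the $\SU$ direct-sum compatibility in Theorem \ref{os5thm4}. For $\U(m)$-bundles a potential defect sign arises because $\check\phi_{Q_1,Q_2}^{E_\bu}$ and $\check\phi_{P_1\t\cS^1,P_2\t\cS^1}^{E_\bu}$ need not be individually compatible with the canonical n-orientations of Theorem \ref{os5thm4}; one must show the two defects agree so that they cancel in the tensor product. In our restricted setting (d), $P=X\t\U(m)$ is trivial so $P\t\cS^1=Y\t\U(m)$ has $c_1=0$ and Theorem \ref{os5thm4} gives compatibility on that side directly; on the $Q$-side, the condition $Q\vert_{X\t\{1\}}\cong X\t\U(m)$ forces $c_1(Q)$ to lie in the $H^1(X)\!\otimes\! d\th$-summand of $H^2(Y)$ under the K\"unneth decomposition, so $c_1(Q_1)c_1(Q_2)=0$ by $d\th^2=0$, killing the characteristic-class defect term that obstructs compatibility.

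Applying the equivalence (d)$\Leftrightarrow$(b) in Theorem \ref{os5thm3} then produces the desired canonical spin structures $\si_P^{F_\bu}$ on $\B_P$ for every principal $\U(m)$-bundle $P\ra X$, compatible with direct sums under Example \ref{os4ex3}. Finally, for a principal $\SU(m)$-bundle $Q\ra X$ with associated $\U(m)$-bundle $P=(Q\t\U(m))/\SU(m)$, I would define $\si_Q^{F_\bu}$ as the pullback of $\si_P^{F_\bu}$ along the morphism $\Ka_{Q,P}$ of Example \ref{os4ex1}; this produces canonical $\SU$-spin structures satisfying the stated Example \ref{os4ex1} compatibility by construction. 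The main obstacle is the verification of \ref{os5thm3}(c)(ii) outlined above: one must correctly identify the $\U$-defect in the canonical n-orientations of Theorem \ref{os5thm4} as an index-theoretic expression in $c_1(Q_1)c_1(Q_2)$ paired against a characteristic class from the family Atiyah--Singer formula on $Y$, and then invoke the vanishing $c_1(Q_1)c_1(Q_2)=0$ available under the restriction in condition (d). This relies on a careful analysis of the excision construction of the canonical n-orientations in \cite{JoUp1}, and is the technical heart of the argument.
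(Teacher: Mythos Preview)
Your overall strategy matches the paper's exactly: identify $E_\bu$ on $Y=X\times\cS^1$ with the Dirac operator, invoke Proposition \ref{os5prop2} and Theorem \ref{os5thm4} to obtain canonical n-orientations $\check\om_Q^{E_\bu}$, form the trivializations $\check\Om_Q^{E_\bu}=\check\om_Q^{E_\bu}\otimes\Nu_{Q,P}^*(\check\om_{P\times\cS^1}^{E_\bu})$, and feed these into Theorem \ref{os5thm3}; the $\SU$ spin structures via Example \ref{os4ex1} are likewise as in the paper. The only divergence is in how you verify condition (c)(ii) for $\U$-bundles.

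Your route assumes that the direct-sum defect in Theorem \ref{os5thm4} for $\U$-bundles is controlled by an index-theoretic expression in $c_1(Q_1)c_1(Q_2)$, and then uses the nice observation that $c_1(Q_a)\in H^1(X)\otimes[d\th]$ (since $Q_a\vert_{X\times\{1\}}$ is trivial) forces this product to vanish. But Theorem \ref{os5thm4} only asserts compatibility when each $c_1(Q_a)=0$ separately; that the general defect depends solely on $c_1(Q_1)c_1(Q_2)$ is not stated anywhere and would have to be extracted from the internals of \cite{JoUp1}. You correctly flag this as the unfinished ``technical heart'', but as written it is a speculative claim rather than an argument. The paper sidesteps this computation entirely, offering two alternatives. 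First, Remark \ref{os5rem1}(b) notes that Theorems \ref{os5thm2} and \ref{os5thm3} have $\SU(m)$-analogues, for which Theorem \ref{os5thm4} \emph{does} give direct-sum compatibility; this yields canonical spin structures for all $\SU(m)$-bundles directly, in particular for $X\times\SU(5)$, and then Example \ref{os4ex2} followed by Theorem \ref{os5thm1} handles the $\U(m)$ case. Second, and closer to your setup: working in condition (d) with $P$ trivial, one may trivialize each $Q_a$ outside $X\times I_a$ for \emph{disjoint} small intervals $I_1,I_2\subset\cS^1$; the Excision Theorem of \cite[Th.~2.13]{Upme}, \cite[Th.~3.1]{JTU} then forces $\check\om_{Q_1}^{E_\bu},\check\om_{Q_2}^{E_\bu},\check\om_{Q_1\oplus Q_2}^{E_\bu}$ to be compatible under Example \ref{os2ex3} in this disjoint-support situation, with no need to analyse the general defect at all.
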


\begin{proof} Observe that when $F_\bu$ is the positive Dirac operator $\slashed{D}_+$ on $X$, the real elliptic operator $E_\bu$ on $X\t\cS^1$ in Definition \ref{os5def2} is naturally isomorphic to the Dirac operator $\slashed{D}$ on $X\t\cS^1$. Hence Theorem \ref{os5thm4} and Proposition \ref{os5prop2} together yield canonical n-orientations $\check\om_Q^{E_\bu}$ on $\B_Q$ for all principal $\U(m)$- or $\SU(m)$-bundles $Q\ra X\t\cS^1$. As in Theorem \ref{os5thm3}(c),(d), whenever $P\ra X,$ $Q\ra X\t\cS^1$ are principal $\U(m)$-bundles with $Q\vert_{X\t\{1\}}\cong P,$ define a trivialization $\check\Om^{E_\bu}_Q:\check O_Q^{E_\bu}\ot_{\Z_2}\Nu_{Q,P}^*(\check O_{P\t\cS^1}^{E_\bu})\cong\B_Q\t\Z_2$ by~$\check\Om^{E_\bu}_Q=\check\om_Q^{E_\bu}\ot\Nu_{Q,P}^*(\check\om_{P\t\cS^1}^{E_\bu})$. 

We would like to say that these $\check\Om^{E_\bu}_Q$ satisfy the conditions of Theorem \ref{os5thm3}(c), and so the theorem follows from Theorem \ref{os5thm3}. However, there is a problem. Theorem \ref{os5thm3}(c) requires the $\check\Om^{E_\bu}_Q$ to have a compatibility under direct sums for all $\U(m_1)$-, $\U(m_2)$-bundles $Q_1,Q_2\ra X\t\cS^1$. Theorem \ref{os5thm4} shows that the analogous compatibility under direct sums holds for $\SU(m_1)$-, $\SU(m_2)$-bundles, but not for $\U(m_1)$-, $\U(m_2)$-bundles.

We offer two solutions to this problem. Firstly, as in Remark \ref{os5rem1}(b) there are analogues of Examples \ref{os2ex3}, \ref{os4ex3} and Theorems \ref{os5thm2}(ii), \ref{os5thm3} for $\SU(m)$-bundles rather than $\U(m)$-bundles, with essentially identical proofs. By working with $\SU(m)$-bundles rather than $\U(m)$-bundles, the problem goes away, so the analogue of Theorem \ref{os5thm3} gives canonical spin structures $\si_P^{F_\bu}$ on $\B_P$ for all principal $\SU(m)$-bundles $P\ra X$. In particular this works when $P=X\t\SU(5)$. Hence Example \ref{os4ex2} gives a canonical spin structure $\hat\si_{X\t\U(4)}^{F_\bu}$ for the trivial $\U(4)$-bundle $X\t\U(4)\ra X$. Then Theorem \ref{os5thm1} defines spin structures $\smash{\si_P^{F_\bu}}$ on $\B_P$ for all $\U(m)$-bundles $P\ra X$ with the properties we want.

Secondly, we can prove that the $\check\Om^{E_\bu}_Q$ for $\U(m)$-bundles $Q\ra X\t\cS^1$ defined above do actually satisfy Theorem \ref{os5thm3}(c), even if the $\smash{\check\om_Q^{E_\bu}}$ are not compatible with direct sums. Since we can apply Theorem \ref{os5thm3}(d), it is enough to do this when $P=Q\vert_{X\t\{1\}}$ is trivial, $P\cong X\t\U(m)$. So suppose $Q_1,Q_2\ra X\t\cS^1$ are principal $\U(m_1)$-, $\U(m_2)$-bundles with $Q_a\vert_{X\t\{1\}}\cong X\t\U(m_a)$ for $a=1,2$. Then we can choose trivializations of $Q_a$ outside $X\t I_a$ for $a=1,2$, where $I_1,I_2\subset\cS^1$ are small, disjoint open intervals. We can then argue using the Excision Theorem of \cite[Th.~2.13]{Upme}, \cite[Th.~3.1]{JTU} that because $Q_1,Q_2$ are trivial outside disjoint open sets in $X\t\cS^1$, the n-orientations $\check\om_{Q_1}^{E_\bu},\check\om_{Q_2}^{E_\bu},\check\om_{Q_1\op Q_2}^{E_\bu}$ in Theorem \ref{os5thm4} are compatible under Example \ref{os2ex3} in this case. So Theorem \ref{os5thm3}(d) holds, and the theorem follows from Theorem~\ref{os5thm3}.	
\end{proof}

\section{Proof of Theorem \ref{os5thm1}}
\label{os6}

Work in the situation of Theorem \ref{os5thm1}. We first define a spin structure $\hat\si_{X\t\U(m)}^{F_\bu}$ on $\B_{X\t\U(m)}$ for all $m\ge N$, extending $\hat\si_{X\t\U(N)}^{F_\bu}$. Apply Example \ref{os4ex3} with $P_1=X\t\U(N)$ and $P_2=X\t\U(m-N)$, so that $P_1\op P_2=X\t\U(m)$ is the trivial $\U(m)$-bundle. This defines a map from spin structures on $\B_{X\t\U(m)}=\B_{P_1\op P_2}$ to $\B_{X\t\U(N)}=\B_{P_1}$. As $2N\ge n+2$, by Proposition \ref{os4prop3} this map is a 1-1 correspondence. Hence there is a unique $\hat\si_{X\t\U(m)}^{F_\bu}$ on $\B_{X\t\U(m)}$ mapped to $\hat\si_{X\t\U(N)}^{F_\bu}$ by this correspondence. If $m=N$ the correspondence is the identity so we recover~$\hat\si_{X\t\U(m)}^{F_\bu}=\hat\si_{X\t\U(N)}^{F_\bu}$.

Now let $P_1\ra X$ be a principal $\U(m_1)$-bundle for some $m_1\ge 0$. We construct a spin structure $\si_{P_1}^{F_\bu}$ on $\B_{P_1}$. Choose $m_2\ge 0$ with $m=m_1+m_2\ge N$ and $2m_2\ge n$. We claim there exists a principal $\U(m_2)$-bundle $P_2\ra X$, unique up to isomorphism, such that $P_1\op P_2\cong X\t\U(m)$ is the trivial $\U(m)$-bundle. 

To see this, let $E_1=(P_1\t\C^{m_1})/\U(m_1)\ra X$ be the associated rank $m_1$ complex vector bundle, write $\ul\C^m\ra X$ for the trivial rank $m$ complex vector bundle, and let $\al:E_1\ra\ul\C^m$ be a generic vector bundle morphism. As $\dim X=n$, dimension counting shows that $\al$ is injective provided $2m_2\ge n-1:$ for $m_1=0$ injectivity is trivial, and for $m_1\ge1$ this is because non-injectivity means vanishing of all $m_1\t m_1$ minors, so $\al$ is non-injective on the zero set of the section $\La^{m_1}(\al)\colon X\to\Hom_\C(\La^{m_1}(E_1),\La^{m_1}(\C^m))$ which, by transversality, is a smooth manifold of dimension $n-2\binom{m}{m_1}\leqslant 2m_2-2\binom{m}{m_1}+1\leqslant -1.$ Hence it is empty. Then we have an orthogonal splitting $\ul\C^m=\al(E_1)\op E_2$ for $E_2\ra X$ a rank $m_2$ vector bundle. Since $2m_2\ge n$, we can repeat the same argument for $X\t[0,1]$ and a generic interpolation vector bundle morphism between two choices for $\al$ to prove that $E_2$ is independent of $\al$ up to isomorphism. Choose a Hermitian metric on $E_2$, and let $P_2\ra X$ be the associated principal $\U(m_2)$-bundle. The claim easily follows.

We apply Theorem \ref{os5thm1}(c). As $P_1\op P_2\cong X\t\U(m)$ with $m\ge N$, we constructed a spin structure $\hat\si_{X\t\U(m)}^{F_\bu}$ on $\B_{X\t\U(m)}$ above, so Example \ref{os4ex1} maps $\hat\si_{X\t\U(m)}^{F_\bu}$ to a spin structure $\si_{P_1}^{F_\bu}$ on $\B_{P_1}$, as we wanted.

We claim this $\si_{P_1}^{F_\bu}$ is independent of the choice of $m_2$. To prove this, let $m_2,\dot m_2$ be alternative choices yielding $\si_{P_1}^{F_\bu},\dot\si_{P_1}^{F_\bu}$, and let $m,E_1,\al,E_2,P_2$ and $\dot m,\dot E_1,\dot\al,\dot E_2,\dot P_2$ be the intermediate data. Without loss of generality $\dot m_2>m_2$. Then $\ul\C^{\dot m}=\ul\C^m\op\ul\C^{\dot m_2-m_2}$, and we may take $\dot\al=\al\op 0_{\C^{\dot m_2-m_2}}$, and $\dot E_2=E_2\op \ul\C^{\dot m_2-m_2}$, and $\dot P_2=P_2\op (X\t\U(\dot m_2-m_2))$. Then we have matching diagrams of principal $\U(k)$-bundles $P$, and of spin structures $\si_P^{F_\bu}$ on $\B_P$, where arrows `$\dashra$' indicate mapping $P_1\op P_2\dashra P_1$, $\si_{P_1\op P_2}^{F_\bu}\dashra \si_{P_1}^{F_\bu}$ in Example~\ref{os4ex3}:
\begin{align*}
&\xymatrix@!0@C=78pt@R=30pt{
*+[r]{(X\!\t\!\U(m))\!\op\!(X\!\t\!\U(\dot m_2\!-\!m_2))} \ar[d]^\cong \ar@{.>}[rr] && *+[r]{X\t\U(m)} \ar[rr]_\cong && *+[l]{P_1\op P_2} \ar@{.>}[d] \\
*+[r]{X\t\U(\dot m)} \ar[rr]^(0.57)\cong && *+[r]{P_1\op \dot P_2} \ar@{.>}[r] & *+[r]{P_1} \ar@{=}[r] & *+[l]{P_1,}} \\
&\xymatrix@!0@C=77pt@R=30pt{
*+[r]{\hat\si_{X\t\U(\dot m)}^{F_\bu}} \ar@{=}[d] \ar@{.>}[rr] && *+[r]{\hat\si_{X\t\U(m)}^{F_\bu}} \ar@{=}[rr] && *+[l]{\hat\si_{X\t\U(m)}^{F_\bu}} \ar@{.>}[d] \\
*+[r]{\hat\si_{X\t\U(\dot m)}^{F_\bu}} \ar@{=}[rr]  && *+[r]{\hat\si_{X\t\U(\dot m)}^{F_\bu}} \ar@{.>}[r] & *+[r]{\dot\si_{P_1}^{F_\bu}} \ar@{=}[r]^? & *+[l]{\si_{P_1}^{F_\bu}.}}
\end{align*}

Here using an associativity property of Example \ref{os4ex3} in the decomposition $(X\t\U(N))\op (X\t\U(m_2-N))\op (X\t\U(\dot m_2-m_2))$ and the definitions of $\hat\si_{X\t\U(m)}^{F_\bu},\hat\si_{X\t\U(\dot m)}^{F_\bu}$, we see that the top left arrow above maps $\hat\si_{X\t\U(\dot m)}^{F_\bu}\dashra\hat\si_{X\t\U(m)}^{F_\bu}$. The same associativity property implies that the rectangle must commute, forcing $\dot\si_{P_1}^{F_\bu}=\si_{P_1}^{F_\bu}$. Hence $\si_{P_1}^{F_\bu}$ is independent of~$m_2$.

Next let $P\ra X$ be a principal $\SU(m)$-bundle for $m\ge 0$, and $Q=(P\t\U(1))/\Z_m$ be the associated principal $\U(m)$-bundle. We defined a spin structure $\si_Q^{F_\bu}$ on $\B_Q$ above. As in Theorem \ref{os5thm1}(d), define $\si_P^{F_\bu}$ to be the unique spin structure on $\B_P$ which is the image of $\si_Q^{F_\bu}$ under Example~\ref{os4ex1}.

We have now constructed spin structures $\si_P^{F_\bu}$ on $\B_P$ for all principal $\U(m)$- and $\SU(m)$-bundles $P\ra X$. We claim they satisfy Theorem \ref{os5thm1}(a)--(d). For (a), the proof above that $\hat\si_{X\t\U(\dot m)}^{F_\bu}\dashra\hat\si_{X\t\U(m)}^{F_\bu}$ in the case that $m_2=\dot m-m$ satisfies $m+m_2\ge N$ and $2m_2\ge n$ also has $\hat\si_{X\t\U(\dot m)}^{F_\bu}\dashra\si_{X\t\U(m)}^{F_\bu}$ by definition of $\si_{X\t\U(m)}^{F_\bu}$. Thus $\si_{X\t\U(m)}^{F_\bu}=\hat\si_{X\t\U(m)}^{F_\bu}$ for all $m\ge N$, giving (a) when~$m=N$.

For (b), the isomorphisms $\B_P\cong\B_{P'}$ and $K_P^{F_\bu}\cong K_{P'}^{F_\bu}$ are independent of $\rho$ as the definition of $\B_P$ involves dividing out by $\Aut(P)$. It is obvious from the construction that these isomorphisms identify~$\si_P^{F_\bu}\cong\si_{P'}^{F_\bu}$.

For (c), let $P_1,P_2\ra X$ be principal $\U(m_1)$-, $\U(m_2)$-bundles. Construct $\si_{P_1}^{F_\bu},\si_{P_2}^{F_\bu}$ using $\U(k_1)$-, $\U(k_2)$-bundles $Q_1,Q_2\ra X$ with $P_a\op Q_a\cong X\t\U(m_a+k_a)$ for $a=1,2$, for $k_1,k_2$ sufficiently large. Then we may construct $\si_{P_1\op P_2}^{F_\bu}$ using the $\U(k_1+k_2)$-bundle $Q_1\op Q_2\ra X$ with $(P_1\op P_2)\op (Q_1\op Q_2)\cong X\t\U(m_1+m_2+k_1+k_2)$. In a similar way to the diagrams above, we have 
\begin{align*}
 \xymatrix@!0@C=78pt@R=35pt{
*+[r]{X\!\t\!\U(m_1\!+\!m_2\!+\!k_1\!+\!k_2)} \ar[d]^(0.4)\cong \ar[rr]_(0.8)\cong && *+[r]{P_1\op P_2\op Q_1\op Q_2} \ar@{.>}[rr] && *+[l]{P_1\op P_2} \ar@{.>}[d] \\
*+[r]{\;\>\begin{subarray}{l}\ts (X\!\t\!\U(m_1\!+\!k_1))\op{} \\ \ts(X\!\t\!\U(m_2\!+\!k_2))\end{subarray}} \ar@{.>}[rr] && {X\!\t\!\U(m_1\!+\!k_1)} \ar[r]^(0.58)\cong & {P_1\op Q_1} \ar@{.>}[r] & *+[l]{P_1,}} \\
\xymatrix@!0@C=77pt@R=30pt{
*+[r]{\hat\si_{X\t\U(m_1+m_2+k_1+k_2)}^{F_\bu}} \ar@{=}[d] \ar@{=}[rr] && *+[r]{\hat\si_{X\t\U(m_1+m_2+k_1+k_2)}^{F_\bu}} \ar@{.>}[rr] && *+[l]{\si_{P_1\op P_2}^{F_\bu}} \ar@{.>}[d] \\
*+[r]{\hat\si_{X\t\U(m_1+k_1+m_2+k_2)}^{F_\bu}} \ar@{.>}[rr]  && {\hat\si_{X\t\U(m_1+k_1)}^{F_\bu}} \ar@{=}[r] & {\hat\si_{X\t\U(m_1+k_1)}^{F_\bu}} \ar@{.>}[r] & *+[l]{\si_{P_1}^{F_\bu},}}
\end{align*} 
so a similar argument to that above shows that Example \ref{os4ex3} maps $\si_{P_1\op P_2}^{F_\bu}\mapsto\si_{P_1}^{F_\bu}$ on $\B_{P_1}$. Similarly it maps $\si_{P_1\op P_2}^{F_\bu}\mapsto\si_{P_2}^{F_\bu}$ on $\B_{P_2}$, proving (c). Part (d) is immediate from the construction.

This proves there exists a family of spin structures $\si_P^{F_\bu}$ satisfying Theorem \ref{os5thm1}(a)--(d). As the construction of these $\si_P^{F_\bu}$, with $\si_{X\t\U(m)}^{F_\bu}=\hat\si_{X\t\U(m)}^{F_\bu}$, uses (a)--(d) at each step, this family is unique. The last part of the theorem follows from Proposition~\ref{os4prop4}.

\section{Proof of Theorem \ref{os5thm2}}
\label{os7}

Throughout this section we work in the situation of Definitions \ref{os5def1} and \ref{os5def2} and Theorem \ref{os5thm2}, with $X,F_\bu$ and $X\t\cS^1, E_\bu$ fixed. We take $G$ to be a Lie group, and $P\ra X$, $Q\ra X\t\cS^1$ to be principal $G$-bundles with an isomorphism $Q\vert_{X\t\{1\}}\cong P$ over $X\t\{1\}\cong X$. Then we also have (non-canonical) isomorphisms $Q\vert_{X\t\{e^{i\th}\}}\cong P$ for all~$e^{i\th}\in\cS^1$.

Let $\nabla_Q\in\A_Q$ be a connection on $Q\ra X\t\cS^1$, so that $[\nabla_Q]$ is a point of $\B_Q$. For simplicity we write much of the proof below as though $\nabla_Q$ is fixed, but in fact all our constructions will be gauge-invariant/equivariant and depend continuously on $\nabla_Q$, and so yield vector bundles, (isomorphisms of) principal $\Z_2$-bundles, etc., over the specified open substacks of $\B_Q$ where they are defined.

We begin by setting up some notation we will need during the proof. We explain the strategy of the proof after Example~\ref{os7ex1}.

\begin{dfn}
\label{os7def1}
Let $X,F_\bu,G$, $Q\ra X\t\cS^1$ and $\nabla_Q$ be as above. Then for each $e^{i\th}\in\cS^1$, we have a principal $G$-bundle $Q\vert_{X\t\{e^{i\th}\}}$ over $X\t\{e^{i\th}\}\cong X$, with associated vector bundle $\Ad(Q)\vert_{X\t\{e^{i\th}\}}$ and connection $\nabla_{\Ad(Q)}\vert_{X\t\{e^{i\th}\}}$. By assumption $F_\bu=(F_0,F_1,D)$ is antilinear self-adjoint, so that $F_0,F_1\ra X$ are complex vector bundles with the same underlying real vector bundle $F\ra X$. Thus we may form the self-adjoint real elliptic twisted operator
\begin{equation*}
D^{\nabla_{\Ad(Q)}\vert_{X\t \{e^{i\th}\}}}:\Ga^\iy\bigl(\Ad(Q)\vert_{X\t\{e^{i\th}\}}\ot F\bigr)\longra\Ga^\iy\bigl(\Ad(Q)\vert_{X\t\{e^{i\th}\}}\ot F\bigr).
\end{equation*}
This has discrete spectrum $\Spec\bigl(D^{\nabla_{\Ad(Q)}\vert_{X\t \{e^{i\th}\}}}\bigr)\subset\R$, with finite-dimensional real eigenspaces $\Eig_\la\bigl(D^{\nabla_{\Ad(Q)}\vert_{X\t \{e^{i\th}\}}}\bigr)$ for $\la\in \Spec\bigl(D^{\nabla_{\Ad(Q)}\vert_{X\t \{e^{i\th}\}}}\bigr)$. 

If $s\in \Eig_\la\bigl(D^{\nabla_{\Ad(Q)}\vert_{X\t \{e^{i\th}\}}}\bigr)$ then $i_{F_0}(s)\in \Eig_{-\la}\bigl(D^{\nabla_{\Ad(Q)}\vert_{X\t \{e^{i\th}\}}}\bigr)$, with $i_{F_0}$ the complex structure on $F_0=F$. Hence $\Spec\bigl(D^{\nabla_{\Ad(Q)}\vert_{X\t \{e^{i\th}\}}}\bigr)$ is preserved by multiplication by $\pm 1$, and $\Eig_0\bigl(D^{\nabla_{\Ad(Q)}\vert_{X\t \{e^{i\th}\}}}\bigr)$ is a complex vector space.

The spectrum $\Spec\bigl(D^{\nabla_{\Ad(Q)}\vert_{X\t \{e^{i\th}\}}}\bigr)$ varies continuously with $[\nabla_Q]\in\B_Q$ and $e^{i\th}\in\cS^1$, in an appropriate sense. Thus, for fixed $e^{i\th}\in\cS^1$ and $\la\in\R$ the condition $\la\notin\Spec\bigl(D^{\nabla_{\Ad(Q)}\vert_{X\t\{e^{i\th}\}}}\bigr)$ is open on $[\nabla_Q]\in\B_Q$. Write $U_{e^{i\th}}^\la(Q)\subset\B_Q$ for the open substack of $[\nabla_Q]\in\B_Q$ with~$\la\notin\Spec\bigl(D^{\nabla_{\Ad(Q)}\vert_{X\t \{e^{i\th}\}}}\bigr)$.
For $e^{i\th_1},\ldots,e^{i\th_k}\in\cS^1$ and $\la_1,\ldots,\la_k\in\R$, $k\ge 1$, we write $U_{e^{i\th_1},\ldots,e^{i\th_k}}^{\la_1,\ldots,\la_k}(Q)$ for~$U_{e^{i\th_1}}^{\la_1}(Q)\cap\cdots\cap U_{e^{i\th_k}}^{\la_k}(Q)\subset\B_Q$.
More generally, if $I\subset\cS^1$ is a compact interval or $I=\cS^1$, and $\la\in\R$, we write $U_I^\la(Q)\subset\B_Q$ for the open substack of $[\nabla_Q]\in\B_Q$ with $\la\notin\Spec\bigl(D^{\nabla_{\Ad(Q)}\vert_{X\t \{e^{i\th}\}}}\bigr)$ for all $e^{i\th}\in I$, again an open condition. If $\th_1,\th_2\in\R$ with $\th_1\le\th_2\le\th_1+2\pi$ we will write $[\th_1,\th_2]$ as a shorthand for the compact interval $\bigl\{e^{i\th}:\th\in[\th_1,\th_2]\subset\R\bigr\}\subset\cS^1$, so we have~$U_{[\th_1,\th_2]}^\la(Q)\subset\B_Q$.
For compact intervals $I_1,\ldots,I_k\subseteq\cS^1$ and $\la_1,\ldots,\la_k\in\R$, $k\ge 1$, we write $U_{I_1,\ldots,I_k}^{\la_1,\ldots,\la_k}(Q)$ for the open substack~$U_{I_1}^{\la_1}(Q)\cap\cdots\cap U_{I_k}^{\la_k}(Q)\subset\B_Q$.

Now let $e^{i\th}\in\cS^1$ and $\mu,\nu\in\R\cup\{\pm\iy\}$ with $\mu,\nu\notin \Spec\bigl(D^{\nabla_{\Ad(Q)}\vert_{X\t \{e^{i\th}\}}}\bigr)$, that is, $[\nabla_Q]\in U_{e^{i\th},e^{i\th}}^{\mu,\nu}(Q)=U_{e^{i\th}}^\mu(Q)\cap U_{e^{i\th}}^\nu(Q)\subset\B_Q$. Write
\begin{equation*}
\Eig^{\mu,\nu}\bigl(D^{\nabla_{\Ad(Q)}\vert_{X\t \{e^{i\th}\}}}\bigr)
\subset L^2\bigl(\Ad(Q)\vert_{X\t\{e^{i\th}\}}\ot F\bigr)
\end{equation*}
for the $L^2$-closure of the direct sum of all eigenspaces $\Eig_\la\bigl(D^{\nabla_{\Ad(Q)}\vert_{X\t \{e^{i\th}\}}}\bigr)$ with eigenvalue $\la$ between $\mu$ and $\nu$, that is, $\mu<\la<\nu$ or~$\mu>\la>\nu$.

Suppose that $\mu,\nu\in\R$. Then $\Eig^{\mu,\nu}\bigl(D^{\nabla_{\Ad(Q)}\vert_{X\t \{e^{i\th}\}}}\bigr)$ is a finite-dimensional real vector subspace of $\Ga^\iy\bigl(\Ad(Q)\vert_{X\t\{e^{i\th}\}}\ot F\bigr)$, which varies continuously with $[\nabla_Q]\in U_{e^{i\th},e^{i\th}}^{\mu,\nu}(Q)$, and so forms a real vector bundle on the topological stack $U_{e^{i\th},e^{i\th}}^{\mu,\nu}(Q)$, which may have different ranks on different connected components of $U_{e^{i\th},e^{i\th}}^{\mu,\nu}(Q)$. Write $O_{Q\vert_{e^{i\th}}}^{\mu,\nu} \ra U_{e^{i\th},e^{i\th}}^{\mu,\nu}(Q)$ for the orientation bundle of this vector bundle, a principal $\Z_2$-bundle over~$U_{e^{i\th},e^{i\th}}^{\mu,\nu}(Q)$.
\end{dfn}

\begin{dfn}
\label{os7def2}
Continue in the situation of Definition \ref{os7def1}. Let $\th_0,\ldots,\th_{k+1}\in\R$ with $\th_0<\th_2<\cdots<\th_{k+1}$ and $\th_0 = \th_k-2\pi$, $\th_{k+1}=\th_1+2\pi$. Then writing $[\th_j,\th_{j+1}]$ as a shorthand for $\bigl\{e^{i\th}:\th\in[\th_j,\th_{j+1}]\subset\R\bigr\}\subset\cS^1$ as above, we cut $\cS^1$ into $k$ intervals $[\th_1,\th_2]$, $[\th_2,\th_3]$, \ldots, $[\th_k,\th_{k+1}]$ intersecting pairwise in their $k$ endpoints $e^{i\th_{k+1}}=e^{i\th_1},e^{i\th_2},\ldots,e^{i\th_k}$. Let $\la_0,\ldots,\la_{k+1}>0$ with $\la_0=\la_k$, $\la_1=\la_{k+1}$. A $2k$-tuple $(\th_1,\ldots,\th_k,\la_1,\ldots,\la_k)$ is a {\it cut\/} for $[\nabla_Q]\in\B_Q$ if
\begin{equation*}
[\nabla_Q]\in U_{[\th_1,\th_2],[\th_2,\th_3],\ldots,[\th_k,\th_{k+1}]}^{\la_1,\ldots,\la_k}(Q)\subset\B_Q,
\end{equation*}
that is, if $\la_j\notin\Spec\bigl(D^{\nabla_{\Ad(Q)}\vert_{X\t \{e^{i\th}\}}}\bigr)$ for all $j=1,\ldots,k$ and~$\th_j\le\th\le\th_{j+1}$.

Observe that $U_{[\th_1,\th_2],[\th_2,\th_3],\ldots,[\th_k,\th_{k+1}]}^{\la_1,\ldots,\la_k}(Q)\subset U_{e^{i\th_j},e^{i\th_j}}^{\la_{j-1},\la_j}(Q)$ for $j=1,\ldots,k$. Thus we have a principal $\Z_2$-bundle
\e
\bigot\nolimits_{j=1}^kO_{Q\vert_{e^{i\th_j}}}^{\la_{j-1},\la_j}\longra
U_{[\th_1,\th_2],[\th_2,\th_3],\ldots,[\th_k,\th_{k+1}]}^{\la_1,\ldots,\la_k}(Q).
\label{os7eq1}
\e

We consider a cut $(\th_1,\ldots,\th_k,\la_1,\ldots,\la_k)$ as cutting $\cS^1$ into the $k$ closed intervals $[\th_1,\th_2],\ab\ldots,\ab[\th_k,\th_{k+1}]$, so we cut $Y=X\t\cS^1$ into $k$ compact manifolds with boundary $Y_1'=X\t[\th_1,\th_2]$, \ldots, $Y_k'=X\t[\th_k,\th_{k+1}]$, which we consider as one disconnected compact manifold with boundary $Y'=Y_1'\amalg\cdots\amalg Y_k'$. We write the boundary $\pd Y_j'=\pd (X\t[\th_j,\th_{j+1}])$ as $X\t\{e^{\th_j}_+,e^{\th_{j+1}}_-\}$. Here $+,-$ keep track of orientations, since in $\pd[\th_j,\th_{j+1}]=\{e^{\th_j},e^{\th_{j+1}}\}$ we give $e^{\th_j}$ the positive orientation, and $e^{\th_{j+1}}$ the negative orientation. Hence the boundary $\pd Y'=X\t\{e^{\th_1}_+,e^{\th_1}_-,\ldots,e^{\th_k}_+,e^{\th_k}_-\}$, where $X\t\{e^{\th_j}_+\}$ and $X\t\{e^{\th_j}_-\}$ are the boundary components to be `glued together' at $X\t\{e^{\th_j}\}$ to make $Y'$ into~$Y$.

For a fixed $[\nabla_Q]\in\B_Q$, we will want to compare different choices of cuts $(\th_1,\ldots,\th_k,\la_1,\ldots,\la_k)$ for $[\nabla_Q]$. Our primary way to do this is to insert an extra point: if $j=1,\ldots,k$ and $\th_{j-1}<\th_*<\th_j$ and $\la_*>0$ we can consider
\e
(\th_1,\ldots,\th_{j-1},\th_*,\th_j,\ldots,\th_k,\la_1,\ldots,\la_{j-1},\la_*,\la_j,\ldots,\la_k).
\label{os7eq2}
\e
If both $(\th_1,\ldots,\th_k,\la_1,\ldots,\la_k)$ and \eq{os7eq2} are cuts for $[\nabla_Q]$, we will call passing from $(\th_1,\ldots,\la_k)$ to \eq{os7eq2} an {\it insertion}, and passing the other way a {\it deletion}.
\end{dfn}

We illustrate the notion of cut for $[\nabla_Q]$ in Figure \ref{os7fig1}. The horizontal axis is the coordinate $\th$ on the circle $\cS^1\ni e^{i\th}$. On the vertical axis we plot the spectrum 
$\Spec\bigl(D^{\nabla_{\Ad(Q)}\vert_{X\t\{e^{i\th}\}}}\bigr)$, a discrete collection of smooth closed curves periodic in $\th$. To choose a cut $(\th_1,\ldots,\th_k,\la_1,\ldots,\la_k)$ we divide the circle into $k$ intervals $[\th_j,\th_{j+1}]$, and choose $\la_j>0$ such that the horizontal level $\la_j$ over $[\th_j,\th_{j+1}]$ does not intersect the graph of the spectrum.
 
\begin{figure}[ht]
\centering
\begin{tikzpicture}
\draw[->] (-0.5,0) -- (10.5,0) node[right]{$\th$};
\draw (0,-3.6) -- (0,3.6);
\draw (0,0) node[below left] {$\th_1$};
\draw[dashed] (0,0.75) node[left]{$\la_1$} -- (2,0.75);
\draw[dashed] (0,-0.75) node[left]{$-\la_1$} -- (2,-0.75);
\draw[dashed] (2,2.75) node[left]{$\la_2$} -- (2.65,2.75);
\draw[dashed] (2,-2.75) node[left]{$-\la_2$} -- (2.65,-2.75);
\draw (2,-3.6) -- (2,3.6);
\draw (2,0) node[below left] {$\th_2$};
\draw (3.75,-0.25) node {\ldots};
\draw (5.5,-3.6) -- (5.5,3.6);
\draw (5.5,0) node[below left] {$\th_{k-1}$};
\draw[dashed] (5.5,3.3) node[left]{$\la_{k-1}$} -- (8,3.3);
\draw[dashed] (5.5,-3.3) node[left]{$-\la_{k-1}$} -- (8,-3.3);
\draw (8,-3.6) -- (8,3.6);
\draw (8,0) node[below left] {$\th_k$};
\draw[dashed] (8,2.25) -- (10,2.25) node[right]{$\begin{subarray}{l}\ts\la_k= \\ \ts \la_0\end{subarray}$};
\draw[dashed] (8,-2.25) -- (10,-2.25)  node[right]{$-\la_k$};
\draw (10,-3.6) -- (10,3.6);
\draw (10,0) node[below left] {$\begin{subarray}{l}\ts \th_{k+1}= \\ \ts\th_1+2\pi\end{subarray}$};
\draw[gray] (0,1) to [out=0, in=180] (2,1.75) to [out=0, in=180] (4,0.5) to [out=0, in=180] (6.5,2.75) to [out=0, in=180] (8,0.25) to [out=0, in=180] (10,1);
\draw[gray] (0,0.5) to [out=0, in=180] (2,0.25) to [out=0, in=180] (4,1) to [out=0, in=180] (6.5,-1) to [out=0, in=180] (8,-1.5) to [out=0, in=180] (9,-1.1) to [out=0, in=180] (10,-0.5); 
\begin{scope}[yscale=-1]
\draw[gray] (0,1) to [out=0, in=180] (2,1.75) to [out=0, in=180] (4,0.5) to [out=0, in=180] (6.5,2.75) to [out=0, in=180] (8,0.25) to [out=0, in=180] (10,1);
\draw[gray] (0,0.5) to [out=0, in=180] (2,0.25) to [out=0, in=180] (4,1) to [out=0, in=180] (6.5,-1) to [out=0, in=180] (8,-1.5) to [out=0, in=180] (9,-1.1) to [out=0, in=180] (10,-0.5); 
\end{scope}
\end{tikzpicture}
\caption{Graph of spectrum $e^{i\th}\mapsto\Spec\bigl(D^{\nabla_{\Ad(Q)}\vert_{X\t\{e^{i\th}\}}}\bigr)$ (curved lines), \newline and a choice of cut $(\th_1,\ldots,\th_k,\la_1,\ldots,\la_k)$ for $[\nabla_Q]$ (dotted lines).}
\label{os7fig1}
\end{figure}

By making $k$ large and the intervals $[\th_j,\th_{j+1}]$ short, it is easy to prove:

\begin{lem}
\label{os7lem1}
Any\/ $[\nabla_Q]\in\B_Q$ admits a cut\/ $(\th_1,\ldots,\th_k,\la_1,\ldots,\la_k)$. Any two cuts for\/ $[\nabla_Q]$ may be linked by a finite chain of insertions and deletions of cuts.
\end{lem}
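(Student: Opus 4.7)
The plan is to establish existence via a standard compactness argument and to link two cuts in two stages: first by refining both to a common angle set through trivial insertions, then by changing one level at a time via a short explicit cycle of insertions and deletions.

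For the existence assertion, I would fix $[\nabla_Q]\in\B_Q$ and, for each $e^{i\th}\in\cS^1$, use discreteness of $\Spec\bigl(D^{\nabla_{\Ad(Q)}\vert_{X\t\{e^{i\th}\}}}\bigr)\subset\R$ to pick a positive level $\la(\th)$ outside it. Continuity of the family of elliptic operators in $\th$ guarantees that $\la(\th)\notin\Spec\bigl(D^{\nabla_{\Ad(Q)}\vert_{X\t\{e^{i\th'}\}}}\bigr)$ for all $e^{i\th'}$ in an open neighborhood $U_\th\subset\cS^1$; compactness of $\cS^1$ then gives a finite subcover. A sufficiently fine cyclic subdivision $\th_1<\cdots<\th_{k+1}=\th_1+2\pi$ subordinate to this subcover, with $\la_j$ chosen to be the $\la$-value of some cover element containing the closed arc $[\th_j,\th_{j+1}]$, then constitutes a cut.

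For linking, given two cuts $C,C'$ of $[\nabla_Q]$, I would first reduce to the case where they share angles. Any angle $\tau_*\in(\th_{j-1},\th_j)$ not already a cut point of $C$ can be inserted trivially into $C$ with inserted level equal to $\la_{j-1}$, the level $C$ already uses on the containing interval: this needs only that $\la_{j-1}$ avoid the spectrum on the subinterval $[\tau_*,\th_j]\subset[\th_{j-1},\th_j]$, which holds automatically. Carrying this out on both cuts yields refinements $\tilde C,\tilde C'$ with a common angle set $\tau_1<\cdots<\tau_m$ and levels $(\mu_i),(\mu_i')$ on the intervals $[\tau_i,\tau_{i+1}]$.

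The main step is then to alter $\tilde C$ to $\tilde C'$ one level at a time. To replace $\mu_i$ by $\mu_i'$ on $[\tau_i,\tau_{i+1}]$ while preserving all other angles and levels, I would exploit that $\mu_{i-1}\notin\Spec\bigl(D^{\nabla_{\Ad(Q)}\vert_{X\t\{e^{i\tau_i}\}}}\bigr)$, hence by continuity $\mu_{i-1}$ avoids the spectrum on a right-neighborhood $(\tau_i,\tau_i+\de)$, while $\mu_i'$ avoids it on all of $[\tau_i,\tau_{i+1}]$. Choosing $\tau_i<\th_*<\tau_*<\tau_i+\de$, I perform the sequence: (i) insert $\tau_*$ with new level $\mu_i'$ on $[\tau_*,\tau_{i+1}]$; (ii) insert $\th_*$ with new level $\mu_{i-1}$ on $[\th_*,\tau_*]$; (iii) delete $\tau_i$, extending $\mu_{i-1}$ over $[\tau_{i-1},\th_*]$; (iv) delete $\th_*$, extending $\mu_{i-1}$ over $[\tau_{i-1},\tau_*]$; (v) insert $\tau_i$ with new level $\mu_i'$ on $[\tau_i,\tau_*]$; (vi) delete $\tau_*$, merging the two adjacent pieces of level $\mu_i'$. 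Every intermediate tuple is a valid cut by the choice of $\de$, and the net effect is exactly to replace $\mu_i$ by $\mu_i'$. Iterating over the finitely many $i$ with $\mu_i\ne\mu_i'$ links $\tilde C$ to $\tilde C'$, hence $C$ to $C'$. The main obstacle I anticipate is purely bookkeeping: verifying at each of the six substeps that the tuple produced is again a cut for $[\nabla_Q]$. Every such verification reduces to the key local fact that a level already avoiding the spectrum at a cut point continues to avoid it on a small neighborhood of that point, which is immediate from continuity of the spectrum in the $\cS^1$-parameter.
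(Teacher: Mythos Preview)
Your proposal is correct. The paper itself gives no proof of this lemma, only the one-line remark ``by making $k$ large and the intervals $[\th_j,\th_{j+1}]$ short, it is easy to prove,'' so you have supplied a complete argument where the authors simply gestured at one. Your existence argument is exactly the compactness argument the paper alludes to, and your two-stage linking (refine to a common angle set, then swap levels one interval at a time) is a clean way to organize the ``easy'' connectivity claim.

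One minor remark on efficiency: your six-step level-swap can be shortened to four. After step~(i) (insert $\tau_*$ close to $\tau_i$ with level $\mu_i'$), you can immediately delete $\tau_i$ --- this extends $\mu_{i-1}$ over $[\tau_{i-1},\tau_*]$, which is valid by your choice of $\de$ --- then reinsert $\tau_i$ with level $\mu_i'$, then delete $\tau_*$. The auxiliary point $\th_*$ and steps~(ii), (iv) are not needed. This does not affect correctness; your longer sequence is also valid at every stage for the reasons you give.
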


\begin{ex}
\label{os7ex1}
Let $\nabla_P$ be a connection on $P\ra X$. Take $Q=P\t\cS^1=\pi_X^*(P)$, where $\pi_X:X\t\cS^1\ra X$ is the projection, and set $\nabla_Q=\pi_X^*(\nabla_P)$. Then $D^{\nabla_{\Ad(Q)}\vert_{X\t \{e^{i\th}\}}}=D^{\nabla_{\Ad(P)}}$, so $\Spec\bigl(D^{\nabla_{\Ad(Q)}\vert_{X\t \{e^{i\th}\}}}\bigr)=\Spec\bigl(D^{\nabla_{\Ad(P)}}\bigr)$ is independent of $e^{i\th}\in\cS^1$. Let $\la\in(0,\iy)\sm\Spec\bigl(D^{\nabla_{\Ad(P)}}\bigr)$. Then $(0,\la)$ is a cut for $[\nabla_Q]$, with $k=1$, $\th_1=0$, $\th_2=2\pi$ and~$\la_0=\la_1=\la_2=\la$.
\end{ex}

We can now explain our strategy for the proof of Theorem \ref{os5thm2}. First observe that by \eq{os2eq5} we have a canonical isomorphism of principal $\Z_2$-bundles on~$\B_Q$:
\e
\check{O}^{E_\bu}_Q \ot_{\Z_2} \Nu_{Q,P}^*(\check{O}^{E_\bu}_{P\t\cS^1})\cong
O^{E_\bu}_Q \ot_{\Z_2} \Nu_{Q,P}^*(O^{E_\bu}_{P\t\cS^1}).
\label{os7eq3}
\e
So we will prove the analogue of Theorem \ref{os5thm2} with $\check{O}^{E_\bu}_Q \ot_{\Z_2} \Nu_{Q,P}^*(\check{O}^{E_\bu}_{P\t\cS^1})$ replaced by $O^{E_\bu}_Q \ot_{\Z_2} \Nu_{Q,P}^*(O^{E_\bu}_{P\t\cS^1})$. Let $[\nabla_Q]\in\B_Q$, and choose a cut $(\th_1,\ab\ldots,\ab\th_k,\ab\la_1,\ab\ldots,\ab\la_k)$ for $[\nabla_Q]$. We will establish canonical 1-1 correspondences between the following $\Z_2$-torsors:
\begin{itemize}
\setlength{\itemsep}{0pt}
\setlength{\parsep}{0pt}
\item[(A)] $O^{E_\bu}_Q \ot_{\Z_2} \Nu_{Q,P}^*(O^{E_\bu}_{P\t\cS^1})\vert_{[\nabla_Q]}$, the right hand side of \eq{os7eq3} at $[\nabla_Q]$;
\item[(B)] $\bigot_{j=1}^kO_{Q\vert_{e^{i\th_j}}}^{\la_{j-1},\la_j}\vert_{[\nabla_Q]}$, the fibre of \eq{os7eq1} at $[\nabla_Q]$; and
\item[(C)] $\Ga_{Q,P}^*(M_P^{F_\bu})\vert_{[\nabla_Q]}$, the right hand side of \eq{os5eq4} at $[\nabla_Q]$.
\end{itemize}
Here Propositions \ref{os7prop1} and \ref{os7prop2} below define the 1-1 correspondences (A) $\Leftrightarrow$ (B) and (B) $\Leftrightarrow$ (C). Composing these gives a correspondence (A) $\Leftrightarrow$ (C), which is continuous in $[\nabla_Q]$ and unchanged by insertions and deletions. Hence by Lemma~\ref{os7lem1} it is independent of the choice of cut $(\th_1,\ldots,\th_k,\la_1,\ldots,\la_k)$. Combining this with \eq{os7eq3} defines an isomorphism of principal $\Z_2$-bundles $\ga_{Q,P}^{F_\bu}$ in \eq{os5eq4}, which we prove satisfies Theorem~\ref{os5thm2}(i),(ii).

We prove these 1-1 correspondences by cutting $Y=X\t\cS^1$ into the compact manifold with boundary $Y'=(X\t[\th_1,\th_2])\amalg\cdots\amalg(X\t[\th_k,\th_{k+1}])$ as above. To prove (A) $\Leftrightarrow$ (B) we compare elliptic operators on $Y$ with elliptic operators on $Y'$, with suitable elliptic boundary conditions at $\pd Y'$. An important part of the proof is the freedom to continuously deform these boundary conditions, leading to a corresponding deformation of the bundle (A). Combined with a technique to change boundary conditions, which picks up the terms $\bigot_{j=1}^kO_{Q\vert_{e^{i\th_j}}}^{\la_{j-1},\la_j}\vert_{[\nabla_Q]}$ appearing in (B), we end up with a Fredholm problem in which $Q'$ may be deformed to be constant on each component $Y'_j$, where the corresponding version of (A) becomes trivial. To prove (B) $\Leftrightarrow$ (C) we need to construct a square root of the complex determinant line bundle $\det_\C\bigl(D^{\nabla_{\Ad(Q)}\vert_{X\t \{e^{i\th}\}}}\bigr)$ over $\cS^1$. A cut determines a natural square root over each $[\th_j,\th_{j+1}]$. While gluing these square roots together one has two choices at each boundary point $e^{i\th_j}$ and these choices are precisely parameterized by (B).

A first order elliptic operator $\ti{D}' \colon \Ga^\iy(E'_0) \ra \Ga^\iy(E'_1)$ on $Y'$ with self-adjoint boundary operator becomes a Fredholm operator upon imposing appropriate boundary conditions. A boundary condition is a closed subspace $B\subset L^2_{1/2}(E'_0|_{\partial Y'})$ of the fractional Sobolev space. We shall use the \emph{elliptic boundary conditions} of B\"ar--Ballmann \cite[Def.~7.5]{BaBa}, reviewed in Appendix \ref{osA}. The main result on elliptic boundary conditions, see Theorem \ref{osAthm1} and \cite[Th.~8.5]{BaBa}, is that they determine Fredholm operators
\e
\label{os7eq4}
f \longmapsto \ti{D}'(f), \qquad
f\vert_{\partial Y'} \in B.
\e

We set up some notation for Proposition \ref{os7prop1}:

\begin{dfn}
\label{os7def3}
Work in the situation of Definitions \ref{os7def1} and \ref{os7def2}, with $\nabla_Q$ a fixed connection on $Q\ra X\t\cS^1$, and $(\th_1,\ldots,\th_k,\la_1,\ldots,\la_k)$ a cut for $[\nabla_Q]$, and $Y=X\t\cS^1$, $Y'=(X\t[\th_1,\th_2])\amalg\cdots\amalg(X\t[\th_k,\th_{k+1}])$, with boundary $\pd Y'=X\t\{e^{\th_1}_+,e^{\th_1}_-,\ldots,e^{\th_k}_+,e^{\th_k}_-\}$. Write $E'_\bu=(E',E',\ti{D}')$ and $Q',\Ad(Q'),\nabla_{Q'}$ for the pullbacks of $E_\bu$ and $Q,\Ad(Q),\nabla_Q$ along the canonical projection $Y'\ra Y$. For $f\in\Ga^\iy(\Ad(Q')\ot E')$ the restriction $f\vert_{\pd Y'}$ divides into components $\{f_{\th_{j-}}, f_{\th_{j+}}\}_{j=1}^k$, where
\e
f_{\th_{j\pm}} = f\vert_{X\t\{e^{i\th_j}_\pm\}} \in \Ga^\iy\big(\Ad(Q)\vert_{X\t\{e^{i\th_j}\}}\ot F\big),
\label{os7eq5}
\e
using the identifications $E'\vert_{X\t\{e^{i\th_j}_\pm\}} = F$ and $Q'\vert_{X\t\{e^{i\th_j}_\pm\}} = Q\vert_{X\t \{e^{i\th_j}\}}$.

To perform deformations, and for proving continuity, we must also work in continuous families. To do this, let $T$ a paracompact Hausdorff topological space, and we replace $P\ra X$, $Q\ra Y=X\t\cS^1$, $Q'\ra Y'$ by topological principal $G$-bundles $P\ra T\t X$, $Q\ra T\t Y$, $Q'\ra T\t Y'$ which have smooth structures in the manifold directions $X,Y,Y'$, and we replace $\nabla_P,\nabla_Q,\nabla_{Q'}$ by partial connections in the manifold directions $X,Y,Y'$. Then for each $t\in T$, we have smooth principal $G$-bundles $P\vert_{\{t\}\t X}\ra X$, $Q\vert_{\{t\}\t Y}\ra Y$, $Q'\vert_{\{t\}\t Y'}\ra Y'$, and smooth connections $\nabla_{P\vert_{\{t\}\t X}}$, $\nabla_{Q\vert_{\{t\}\t Y}}$, $\nabla_{Q'\vert_{\{t\}\t Y'}}$, which all vary continuously with $t\in T$. Let $\pi_{Y'}:T\t Y' \ra Y'$ denote the projection. 

A {\it $T$-family of elliptic boundary conditions for $Q'$} is a Hilbert subbundle
\begin{equation*}
B \subset L^2_{1/2}\big(\Ad(Q')\vert_{\partial Y'}\ot\pi_{Y'}^*(E'_0)\big)
\end{equation*}
such that every $B\vert_t$, $t\in T$, is an elliptic boundary condition for $\ti{D}'{}^{\nabla_{Q'\vert \{t\}\t Y'}}$ in the sense of Definition \ref{osAdef1}.
In this case, \eq{os7eq4} becomes a continuous family of Fredholm operators by Proposition \ref{osAprop1}. Extending Definition \ref{os2def3}, we have an \emph{orientation bundle} $O^{E'_\bu}_{Q'}(B) \ra T$ of the corresponding family of Fredholm operators \eq{os7eq4}. We will usually define $T$-families of elliptic boundary conditions by specifying for each parameter $t \in T$ the conditions for $\{f_{\th_{j-}}, f_{\th_{j+}}\}_{j=1}^k$ to belong to the subspace $B\vert_t$, using the notation \eq{os7eq5} for sections over the boundary.
\end{dfn}

\begin{lem}
\label{os7lem2}
Work in the situation and notation of Definition\/ {\rm\ref{os7def1}}. 
\begin{itemize}
\setlength{\itemsep}{0pt}
\setlength{\parsep}{0pt}
\item[{\rm(i)}] Let\/ $I\subset\cS^1$ be a compact interval and\/ $\mu,\nu\in\R$. Then we have an open substack\/ $U_{I,I}^{\mu,\nu}(Q)\subset\B_Q$. For each\/ $e^{i\th}\in I$ we have\/ $U_{I,I}^{\mu,\nu}(Q)\subseteq U_{e^{i\th},e^{i\th}}^{\mu,\nu}(Q)$ and an orientation bundle\/ $O_{Q\vert_{e^{i\th}}}^{\mu,\nu} \ra U_{e^{i\th},e^{i\th}}^{\mu,\nu}(Q)$. The restrictions\/ $O_{Q\vert_{e^{i\th}}}^{\mu,\nu}\vert_{U_{I,I}^{\mu,\nu}(Q)}$ vary continuously with\/ $e^{i\th}\in I$. Thus we have fibre transport isomorphisms
\e
O^{\mu,\nu}_{Q\vert_{e^{i\th_1}}}\Big\vert_{U_{I,I}^{\mu,\nu}(Q)} \overset{\cong}{\longra} O^{\mu,\nu}_{Q\vert_{e^{i\th_2}}}\Big\vert_{U_{I,I}^{\mu,\nu}(Q)}
\quad \text{for all\/ $e^{i\th_1},e^{i\th_2} \in I$.}
\label{os7eq6}
\e
\item[{\rm(ii)}] For\/ $\la,\mu,\nu \in \R$ and\/ $e^{i\th}\in\cS^1$ we have a canonical isomorphism
\e
O_{Q\vert_{e^{i\th}}}^{\la,\nu}\Big\vert_{U_{e^{i\th},e^{i\th},e^{i\th}}^{\la,\mu,\nu}(Q)} \overset{\cong}{\longra} O_{Q\vert_{e^{i\th}}}^{\la,\mu}\ot_{\Z_2} O_{Q\vert_{e^{i\th}}}^{\mu,\nu}\Big\vert_{U_{e^{i\th},e^{i\th},e^{i\th}}^{\la,\mu,\nu}(Q)}.
\label{os7eq7}
\e
\item[{\rm(iii)}] Let\/ $Q_1, Q_2 \ra X\t\cS^1$ be principal\/ {\rm$\U(m_1)$-,\/ $\U(m_2)$-}bundles,\/ $\mu,\nu\in\R$, and $e^{i\th}\in\cS^1$. Define an open substack\/ $U_{e^{i\th},e^{i\th}}^{\mu,\nu}(Q_1,Q_2)\subset\B_{Q_1}\t\B_{Q_2}$ by
\begin{equation*}
\!\!\! U_{e^{i\th},e^{i\th}}^{\mu,\nu}(Q_1,Q_2)\!=\!\bigl(U_{e^{i\th},e^{i\th}}^{\mu,\nu}(Q_1)\t U_{e^{i\th},e^{i\th}}^{\mu,\nu}(Q_2)\bigr)\cap\Phi_{Q_1,Q_2}^{-1}\bigl(U_{e^{i\th},e^{i\th}}^{\mu,\nu}(Q_1\!\op Q_2)\bigr),
\end{equation*}
for\/ $\Phi_{Q_1,Q_2}:\B_{Q_1}\t\B_{Q_2}\ra \B_{Q_1\op Q_2}$ as in \eq{os2eq21}. Then there is a canonical isomorphism of principal\/ $\Z_2$-bundles on\/ $U_{e^{i\th},e^{i\th}}^{\mu,\nu}(Q_1,Q_2)\!:$
\end{itemize}
\e
(O^{\mu,\nu}_{Q_1\vert_{e^{i\th}}}\bt
O^{\mu,\nu}_{Q_2\vert_{e^{i\th}}})\vert_{U_{e^{i\th},e^{i\th}}^{\mu,\nu}(Q_1,Q_2)}\overset{\cong}{\longra}\Phi_{Q_1,Q_2}^*(O^{\mu,\nu}_{Q_1 \op Q_2\vert_{e^{i\th}}})\vert_{U_{e^{i\th},e^{i\th}}^{\mu,\nu}(Q_1,Q_2)}
\label{os7eq8}
\e
\end{lem}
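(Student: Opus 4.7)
My plan is to handle the three parts separately, since each reduces to a standard construction in the spectral theory of continuous families of self-adjoint elliptic operators.

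For part (i), I will realize $\Eig^{\mu,\nu}\bigl(D^{\nabla_{\Ad(Q)}\vert_{X\t\{e^{i\th}\}}}\bigr)$ as the image of the spectral projector
\begin{equation*}
\Pi = \frac{1}{2\pi i}\oint_\Ga \bigl(z-D^{\nabla_{\Ad(Q)}\vert_{X\t\{e^{i\th}\}}}\bigr)^{-1}\,dz,
\end{equation*}
where $\Ga\subset\C$ is a closed contour enclosing precisely the eigenvalues strictly between $\mu$ and $\nu$. The hypothesis $[\nabla_Q]\in U^{\mu,\nu}_{I,I}(Q)$ together with compactness of $I$ forces a uniform spectral gap at both $\mu$ and $\nu$ as $e^{i\th}$ ranges over $I$, so such a contour exists locally and depends continuously on $(e^{i\th},[\nabla_Q])$. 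This exhibits $\Eig^{\mu,\nu}$ as a continuous finite-rank real vector bundle over the product $U^{\mu,\nu}_{I,I}(Q)\t I$, and its orientation bundle is a principal $\Z_2$-bundle on this product. Since $I$ is a contractible compact interval, any principal $\Z_2$-bundle is canonically trivial in the $I$-direction, which delivers the fibre-transport isomorphisms \eq{os7eq6}.

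For part (ii), I would fix $[\nabla_Q]\in U^{\la,\mu,\nu}_{e^{i\th},e^{i\th},e^{i\th}}(Q)$ and use the orthogonal spectral decomposition of the self-adjoint operator. In the canonical ordering $\la<\mu<\nu$, one has the orthogonal direct sum
\begin{equation*}
\Eig^{\la,\nu}=\Eig^{\la,\mu}\op\Eig^{\mu,\nu},
\end{equation*}
which yields a canonical isomorphism of determinant lines, hence of orientation torsors, giving \eq{os7eq7}. For other orderings of $\la,\mu,\nu$ I would invoke the symmetry $\Eig^{\mu,\nu}=\Eig^{\nu,\mu}$ built into Definition~\ref{os7def1} together with the fact that $O^{\mu,\nu}\ot_{\Z_2}O^{\mu,\nu}$ is canonically trivial to rearrange terms. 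Continuity in $[\nabla_Q]$ then follows from the spectral projector argument of part (i) applied at the single point $e^{i\th}$.

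For part (iii), the key observation is that the block-diagonal connection $\Phi_{Q_1,Q_2}(\nabla_{Q_1},\nabla_{Q_2})$ on $Q_1\op Q_2$ induces a $\nabla$-parallel orthogonal splitting
\begin{equation*}
\Ad(Q_1\op Q_2)\vert_{X\t\{e^{i\th}\}}=\Ad(Q_1)\vert_{X\t\{e^{i\th}\}}\op\Ad(Q_2)\vert_{X\t\{e^{i\th}\}}\op\mathcal{M},
\end{equation*}
where $\mathcal{M}$ is the associated bundle for the complex $\U(m_1)\t\U(m_2)$-representation $\u(m_1+m_2)/(\u(m_1)\op\u(m_2))\cong\C^{m_1}\ot_\C\ov{\C^{m_2}}$, exactly as in Example \ref{os2ex3}. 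The twisted operator splits accordingly as a direct sum of three self-adjoint elliptic operators, and so does every eigenspace $\Eig^{\mu,\nu}$. The summand inside $\Ga^\iy(\mathcal{M}\ot_\R F)$ carries a canonical complex structure induced from $\mathcal{M}$ that commutes with the twisted operator, since the symbol of $D$ acts only on the $F$-factor; this eigenspace is therefore a complex vector space with canonical orientation, contributing trivially to the orientation torsor. The two remaining summands yield exactly $O^{\mu,\nu}_{Q_1\vert_{e^{i\th}}}\bt O^{\mu,\nu}_{Q_2\vert_{e^{i\th}}}$, producing~\eq{os7eq8}.

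The main obstacle I anticipate is the bookkeeping of orientation conventions in the spirit of Remark \ref{os2rem2}: one must ensure that fibrewise identifications assemble into genuine isomorphisms of principal $\Z_2$-bundles, and that the canonical complex orientation contributed by $\mathcal{M}$ in part (iii) is inserted in the correct position in the ordering $(\Ad(Q_1),\Ad(Q_2),\mathcal{M})$ so as not to introduce spurious signs. A secondary technical point is verifying continuity of the spectral projector for $\nabla_Q$ varying in the $C^\iy$ topology on $\A_Q$; this is a standard consequence of norm-resolvent convergence for families of self-adjoint elliptic operators, but requires some care because $\B_Q$ is only a topological stack.
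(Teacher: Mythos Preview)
Your proposal is correct and follows essentially the same approach as the paper's own proof, which is extremely terse: it dismisses (i) as ``immediate,'' proves (ii) from the direct-sum splitting $\Eig^{\la,\mu}\op\Eig^{\mu,\nu}=\Eig^{\la,\nu}$ in the ordered case and declares other orders ``similar,'' and for (iii) uses exactly your decomposition $\Ad(Q_1\op Q_2)=\Ad(Q_1)\op(\bar Q_1\ot_\C Q_2)\op\Ad(Q_2)$ with the middle summand complex-linear and hence canonically oriented. Your spectral-projector argument for (i) and your handling of the non-ordered cases in (ii) via the triviality of $O^{\mu,\nu}\ot_{\Z_2}O^{\mu,\nu}$ simply flesh out what the paper leaves implicit.
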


\begin{proof} Part (i) is immediate. For (ii), consider the case $\la \leq \mu \leq \nu$. Then $\Eig^{\la,\mu}\op \Eig^{\mu,\nu}=\Eig^{\la,\nu}$, and taking determinants is compatible with direct sums, so \eq{os7eq7} follows. The other orders for $\la,\mu,\nu$ are similar. For (iii), the operator $D^{\nabla_{\bar Q_1 \ot_\C Q_2}\vert_{X\t\{e^{i\th}\}}}$ is complex linear, so its eigenspaces are canonically oriented. As $\nabla_{\Ad(Q_1\op Q_2)}=\nabla_{\Ad(Q_1)}\op\nabla_{\bar Q_1 \ot_\C Q_2}\op\nabla_{\Ad(Q_2)}$, the result follows from the compatibility of determinants with direct sums.
\end{proof}

\begin{prop}
\label{os7prop1}
Work in the situation of Definition\/ {\rm\ref{os7def2},} with\/ $(\th_1,\ldots,\th_k,\ab\la_1,\ab\ldots,\la_k)$ as defined there. Over\/ $U=U^{\la_1,\ldots,\la_k}_{[\th_1,\th_2],\ldots,[\th_k,\th_{k+1}]}(Q) \subset \B_Q$ we have a canonical isomorphism
\e
O^{E_\bu}_Q \ot_{\Z_2} \Nu_{Q,P}^*(O^{E_\bu}_{P\t\cS^1})\big\vert_U \longra \bigot\nolimits_{j=1}^k O^{\la_{j-1},\la_j}_{Q\vert_{e^{i\th_j}}}\big\vert_U.
\label{os7eq9}
\e
Moreover, \eq{os7eq9} has the following properties:
\begin{itemize}
\setlength{\itemsep}{0pt}
\setlength{\parsep}{0pt}
\item[{\rm(i)}] Suppose\/ $Q=P\t\cS^1,$ and\/ $\nabla_Q=\pi_X^*(\nabla_P),$ and\/ $k=1,$ as in Example\/ {\rm\ref{os7ex1}}. Then both sides of \eq{os7eq9} at\/ $[\nabla_Q]$ are canonically trivial, and\/ \eq{os7eq9} identifies these trivializations.
\item[{\rm(ii)}] Let\/ $Q_1, Q_2 \ra X\t\cS^1$ and\/ $P_1, P_2 \ra X$ be principal\/ {\rm$\U(m_1)$-, $\U(m_2)$}-bundles with\/ $P_a \cong Q_a\vert_{X\t\{1\}}$ for\/ $a=1,2$. Over the open substack\/
\e
\begin{split}
&U^{\la_1,\ldots,\la_k}_{[\th_1,\th_2],\ldots,[\th_k,\th_{k+1}]}(Q_1,Q_2)\\
&=
\bigl(U^{\la_1,\ldots,\la_k}_{[\th_1,\th_2],\ldots,[\th_k,\th_{k+1}]}(Q_1)\t U^{\la_1,\ldots,\la_k}_{[\th_1,\th_2],\ldots,[\th_k,\th_{k+1}]}(Q_2)\bigr)\\
&\qquad \cap\Phi_{Q_1,Q_2}^{-1}\bigl(U^{\la_1,\ldots,\la_k}_{[\th_1,\th_2],\ldots,[\th_k,\th_{k+1}]}(Q_1\!\op Q_2)\bigr)\subset\B_{Q_1}\t\B_{Q_2}
\end{split}
\label{os7eq10}
\e
we have a commutative diagram:
\begin{equation*}
\xymatrix@!0@C=265pt@R=40pt{
*+[r]{O^{E_\bu}_{Q_1} \ot \Nu_{Q_1,P_1}^*(O^{E_\bu}_{P_1\t\cS^1})
\boxtimes
O^{E_\bu}_{Q_2} \ot \Nu_{Q_2,P_2}^*(O^{E_\bu}_{P_2\t\cS^1})}\ar[dd]^{
\begin{subarray}{l}
\check\phi^{E_\bu}_{Q_1,Q_2}\ot(\Nu_{Q_1,P_1}\t \Nu_{Q_2,P_2})^*\\
\check\phi^{E_\bu}_{P_1\t\cS^1,P_2\t\cS^1}
\end{subarray}
}
\ar[rd]^(0.6){\eq{os7eq9}\bt\eq{os7eq9}}
\\
& *+[l]{\bigot_{j=1}^k O^{\la_{j-1},\la_j}_{Q_1\vert_{e^{i\th_j}}}
\boxtimes O^{\la_{j-1},\la_j}_{Q_2\vert_{e^{i\th_j}}}}\ar[dd]_{\bigot_{j=1}^k\eq{os7eq8}}
\\
*+[r]{O^{E_\bu}_{Q_1\op Q_2} \ot \Nu_{Q_1\op Q_2,P_1\op P_2}^*(O^{E_\bu}_{P_1\op P_2\t\cS^1})}
\ar[rd]_{\eq{os7eq9}}
\\
& *+[l]{\bigot_{j=1}^k O^{\la_{j-1},\la_j}_{Q_1\op Q_2\vert_{e^{i\th_j}}}.}
}
\end{equation*}
\item[{\rm(iii)}] If\/ $j=1,\ldots,k$ and\/ $\th_{j-1}<\th_*<\th_j,$ $\la_*>0$ we may replace\/ $(\th_1,\ab\ldots,\ab\th_k,\ab\la_1,\ab\ldots,\ab\la_k)$ by\/ {\rm\eq{os7eq2},} an \begin{bfseries}insertion\end{bfseries} in the sense of Definition\/ {\rm\ref{os7def2}}. This changes the right hand side of\/ \eq{os7eq9} over\/ $U\cap U^{\la_*}_{[\th_*,\th_j]}$ by replacing\/ $O^{\la_{j-1},\la_j}_{Q\vert_{e^{i\th_j}}}$ with\/ $O^{\la_{j-1},\la_*}_{Q\vert_{e^{i\th_*}}}\ot_{\Z_2}O^{\la_*,\la_j}_{Q\vert_{e^{i\th_j}}}$. The corresponding maps \eq{os7eq9} are then connected via the isomorphism
\e
\!\!\!\!\xymatrix@C=27pt{
O^{\la_{j-1},\la_j}_{Q\vert_{e^{i\th_j}}}
\ar[r]^(0.35){\eq{os7eq7}} &
O^{\la_{j-1},\la_*}_{Q\vert_{e^{i\th_j}}}\!\ot_{\Z_2}\!O^{\la_*,\la_j}_{Q\vert_{e^{i\th_j}}}
\ar[r]^{\eq{os7eq6}\ot\id} &
O^{\la_{j-1},\la_*}_{Q\vert_{e^{i\th_*}}}\!\ot_{\Z_2}\!O^{\la_*,\la_j}_{Q\vert_{e^{i\th_j}}}
}
\label{os7eq11}
\e
in the sense of a commutative triangle.
\end{itemize}
\end{prop}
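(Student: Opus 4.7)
The plan is to cut $Y = X\t\cS^1$ along the $k$ circles $X\t\{e^{i\th_j}\}$, giving a compact manifold with boundary $Y' = \coprod_{j=1}^k X\t[\th_j,\th_{j+1}]$ with pulled-back bundle $Q'$, connection $\nabla_{Q'}$, and first-order self-adjoint operator $\ti D'$ as in Definition \ref{os7def3}, and then to compare orientation bundles of two different elliptic boundary problems for $\ti D'^{\nabla_{Q'}}$ on $Y'$ whose difference is measured precisely by the spectral data at the cut points. The $T$-family framework of Definition \ref{os7def3} and Theorem \ref{osAthm1} guarantees that $O^{E'_\bu}_{Q'}(B)$ varies continuously under continuous deformation of the elliptic boundary condition $B$, so the pointwise comparison will automatically upgrade to an isomorphism of principal $\Z_2$-bundles over the open substack $U\subset\B_Q$.

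\textbf{Two boundary conditions.} The first is the \emph{gluing} condition $B^{\rm glue}$: using notation \eq{os7eq5}, require $f_{\th_{j+}} = f_{\th_{j-}}$ for each $j$. This is elliptic, and the reconstruction map $Y'\to Y$ identifies the Fredholm problem $(\ti D', B^{\rm glue})$ with $\ti D^{\nabla_{\Ad(Q)}}$ on $Y$, giving a canonical isomorphism $O^{E'_\bu}_{Q'}(B^{\rm glue}) \cong O^{E_\bu}_Q$. The second, $B^{\rm spec}$, is of APS-type: at the boundary component $X\t\{e^{i\th_j}_-\}$ of $Y'_{j-1}$ impose $P^{\th_j}_{<\la_{j-1}}(f_{\th_{j-}}) = 0$, and at $X\t\{e^{i\th_j}_+\}$ of $Y'_j$ impose $P^{\th_j}_{>\la_j}(f_{\th_{j+}}) = 0$, where $P^{\th_j}_{<\la}, P^{\th_j}_{>\la}$ are the spectral projections of $A_j := D^{\nabla_{\Ad(Q)}\vert_{X\t\{e^{i\th_j}\}}}$ onto eigenspaces of eigenvalue $<\la$ resp.\ $>\la$. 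These are elliptic precisely because the $\la_j$ avoid $\Spec(A_j)$ on the relevant intervals by the cut hypothesis, and $(\ti D', B^{\rm spec})$ then decouples across the components $Y'_j$.

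\textbf{Deformation and decoupled problem.} The two boundary conditions $B^{\rm glue}$ and $B^{\rm spec}$ can be joined by a continuous path in the Grassmannian of elliptic boundary conditions (graph rotation on the space of $L^2_{1/2}$ Cauchy data, as in B\"ar--Ballmann's formalism reviewed in Appendix \ref{osA}). The change in orientation along this path is localized at the $k$ cut points and, by a standard comparison between the matching and APS projections, is fibrewise given by the finite-dimensional sum $\bigoplus_j \Eig^{\la_{j-1},\la_j}(A_j)$; taking orientation bundles yields $\bigotimes_j O^{\la_{j-1},\la_j}_{Q\vert_{e^{i\th_j}}}$. After imposing $B^{\rm spec}$ each restriction to $Y'_j$ may be further deformed within elliptic Fredholm problems by flattening $\nabla_{Q'}\vert_{Y'_j}$ to the $\th$-constant pullback of $\nabla_Q\vert_{X\t\{1\}}$ (possible because $[\th_j,\th_{j+1}]$ is contractible and the cut hypothesis is preserved along the deformation), after which separation of variables identifies the resulting orientation bundle with the restriction of $O^{E_\bu}_{P\t\cS^1}$ at the trivially-extended connection $[\pi_X^*(\nabla_Q\vert_{X\t\{1\}})]$, which is $\Nu_{Q,P}^*(O^{E_\bu}_{P\t\cS^1})$. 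Composing the three identifications and transposing this factor to the left gives \eq{os7eq9}.

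\textbf{Properties and main obstacle.} Property (i) is immediate: for $Q = P\t\cS^1$ and $\nabla_Q = \pi_X^*(\nabla_P)$ the $\th$-flattening step is trivial, both sides become the squared torsor $(O^{E_\bu}_{P\t\cS^1}\vert_{[\nabla_Q]})^{\ot 2}$, and our identification is the identity. Property (ii) follows from additivity of determinant line bundles under the direct sum $Q_1\op Q_2$ and compatibility of the chosen boundary conditions and spectral projections with direct sums, which together produce the required commuting square involving \eq{os7eq8}. Property (iii) comes from realizing the insertion of $(\th_*,\la_*)$ by first passing through an intermediate gluing condition at $X\t\{e^{i\th_*}\}$, before transitioning to the finer $B^{\rm spec}$; the resulting two-step comparison is exactly \eq{os7eq7} followed by the fibre transport \eq{os7eq6}, as in \eq{os7eq11}. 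The main technical obstacle will be making the deformation arguments rigorous in the $T$-family setting of Appendix \ref{osA} --- in particular, establishing joint continuity of the spectral projections in $[\nabla_Q]\in U$ as eigenvalues cross non-critical levels, and verifying that the sign conventions for determinant line bundles of \cite{Upme} match those implicit in the APS comparison; both are standard but require care.
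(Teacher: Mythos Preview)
Your overall strategy matches the paper's: cut $Y=X\times\cS^1$ open along the $k$ slices, compare the gluing boundary problem with an APS-type problem, and deform to a constant connection to reach $\Nu_{Q,P}^*(O^{E_\bu}_{P\times\cS^1})$. The comparison of $B^{\rm glue}$ with $B^{\rm spec}$ via Proposition~\ref{osAprop2} and the role of $\bigoplus_j\Eig^{\la_{j-1},\la_j}(A_j)$ are also correct in spirit.

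However, the flattening step contains a genuine gap. You assert that ``the cut hypothesis is preserved along the deformation'' when you flatten $\nabla_{Q'}\vert_{Y'_j}$ to the constant pullback of $\nabla_Q\vert_{X\times\{1\}}$. This is false in general: the cut hypothesis only says $\la_j\notin\Spec\bigl(D^{\nabla_{\Ad(Q)}\vert_{X\times\{e^{i\th}\}}}\bigr)$ for $\th\in[\th_j,\th_{j+1}]$, so it controls the spectrum only along that interval. A deformation from $\nabla_{Q'}\vert_{Y'_j}$ to $\pi_X^*(\nabla_Q\vert_{X\times\{1\}})$ will carry the boundary operator at $X\times\{e^{i\th_j}_+\}$ through connections whose spectra are \emph{not} constrained by the cut, so the APS projector at level $\la_j$ can jump and ellipticity is lost. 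If instead you flatten each $Y'_j$ safely to a point \emph{inside} $[\th_j,\th_{j+1}]$, you preserve ellipticity but end up with a different constant connection on each component and hence cannot directly identify the result with a single $P\times\cS^1$ problem; the final ``separation of variables'' identification then has no obvious meaning. A related issue is that running your gluing-to-$B^{\rm spec}$ step in reverse for the constant connection to reach $O^{E_\bu}_{P\times\cS^1}$ would pick up the \emph{same} factors $\bigotimes_jO^{\la_{j-1},\la_j}$ again (now for $P\times\cS^1$), and you have not explained why these are canonically trivial.

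The paper resolves this by organising the argument differently. It first shows that passing from gluing to the APS-type condition $B^{\la_1,\ldots,\la_k}_{\th_1,\ldots,\th_k}$ (with the \emph{same} level $\la_j$ on both sides at $e^{i\th_j}$) contributes \emph{no} orientation factor, giving \eq{os7eq15}. The orientation factors $O^{\la_{j-1},\la_j}_{Q\vert_{e^{i\th_j}}}$ are instead picked up one at a time during a carefully staged flattening via explicit maps $\vartheta_\ell:[0,1]\times Y'\to Y'$, which at each stage collapse the later intervals to the single point $e^{i\th_\ell}$ and simultaneously shift the APS level there from $\la_\ell$ to $\la_{\ell-1}$; because $e^{i\th_\ell}$ is an endpoint of two adjacent cut intervals, \emph{both} levels avoid the spectrum at that point, so the level change is legitimate and contributes exactly $O^{\la_{\ell-1},\la_\ell}_{Q\vert_{e^{i\th_\ell}}}$. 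After the full chain \eq{os7eq16} one is left with $\vartheta_1(1)^*Q'$ carrying the \emph{uniform} level $\la_1$, which then matches $P\times\cS^1$ via the inverse of \eq{os7eq15}. This progressive collapse-and-relabel is the key device your sketch is missing.
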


\begin{proof} The isomorphism \eq{os7eq9} will be constructed by cutting $Y=X\t\cS^1$ into $Y'$ as in Definitions \ref{os7def2} and \ref{os7def3} and then deforming through elliptic boundary value problems on $Y'$. To set this up, we need more notation. For $\mu,\nu \in \R\cup\{\pm\iy\}$, $\nabla_Q \in \A_Q$, and $e^{i\th} \in\cS^1$, let $\Pi^{\mu,\nu}_{\nabla_{Q}\vert_{e^{i\th}}}$ be the $L^2$-orthogonal projection onto $\Eig^{\mu,\nu}\big(D^{\nabla_{\Ad(Q)}\vert_{X\t \{e^{i\th}\}}} \circ i_{F_0}\big)$ in Definition \ref{os7def1}. Here we include $i_{F_0}$, since comparing \eq{os5eq1} and \eq{osAeq1} shows that the boundary operator of $\ti{D}'$ is $-i_{F_0}\circ D = D\circ i_{F_0}$. However, we may identify all eigenspaces of $D^{\nabla_{\Ad(Q)}\vert_{X\t \{e^{i\th}\}}} \circ i_{F_0}$ with those of $D^{\nabla_{\Ad(Q)}\vert_{X\t \{e^{i\th}\}}}$ via the isomorphism $f \mapsto \ha(f-i_{F_0}f)$, which will be used to identify the corresponding orientation bundles of Definition \ref{os7def1}. Moreover, this proves that the eigenvalues agree so that $\Pi^{\mu,\nu}_{\nabla_{Q}\vert_{e^{i\th}}}$ becomes a continuous family of bounded operators over $U^{\mu,\nu}_{\th,\th}(Q)$. 

Let $Y', E'_\bu, Q'$ be as in Definitions \ref{os7def2} and \ref{os7def3}. Then we have a family of twisted elliptic operators $\ti D^{\prime\nabla_{\Ad(Q')}}$ on the compact manifold with boundary $Y'$ parametrized by $[\nabla_Q]$ in the topological stack $U$, where $\nabla_Q$ on $Y$ lifts to $\nabla_{Q'}$ on $Y'$ as in Definition \ref{os7def3}. Define a $U\t[0,1]$-family $B$ of elliptic boundary conditions $B_{([\nabla_Q],t)}$ for this $U$-family of operators as follows: at $([\nabla_Q],t)$ in $U\t[0,1]$ restrict the boundary values $\{f_{\th_{j-}}, f_{\th_{j+}}\}_{j=1}^k$ to belong to the subspace
\begin{equation*}
\left.
\begin{aligned}
\Pi^{\la_j,\iy}_{\nabla_{Q}\vert_{e^{i\th_j}}}(f_{\th_{j+}})
&=(1-t)\cdot \Pi^{\la_j,\iy}_{\nabla_{Q}\vert_{e^{i\th_j}}}(f_{\th_{j-}}),\\
\Pi^{-\la_j,\la_j}_{\nabla_{Q}\vert_{e^{i\th_j}}}(f_{\th_{j+}})
&=\Pi^{-\la_j,\la_j}_{\nabla_{Q}\vert_{e^{i\th_j}}}(f_{\th_{j-}}),\\
(1-t)\cdot \Pi^{-\iy,-\la_j}_{\nabla_{Q}\vert_{e^{i\th_j}}}(f_{\th_{j+}})
&=\Pi^{-\iy,-\la_j}_{\nabla_{Q}\vert_{e^{i\th_j}}}(f_{\th_{j-}}).
\end{aligned}
\enskip\right\}B(t)
\end{equation*}

Let $B(t)$ denote the restriction of $B$ to $U \t\{t\}$ for $t\in[0,1]$. The determinant line of \eq{os7eq4} constructed from $\ti{D}'^{\nabla_{\Ad(Q')}}$ with the boundary condition $B(0)$ can be identified with the determinant line of $\ti{D}^{\nabla_{\Ad(Q)}}$, using \eq{osAeq3}. Hence
\e
O^{E_\bu}_Q\vert_U \overset{\cong}{\longra}
O^{E'_\bu}_{Q'}\big(B(0)\big).
\label{os7eq12}
\e
Fibre transport in $O_{Q'}^{E'_\bu}(B) \ra U\t[0,1]$ along $[0,1]$ determines an isomorphism
\e
O^{E'_\bu}_{Q'}\big(B(0)\big)
\overset{\cong}{\longra}
O^{E'_\bu}_{Q'}\big(B(1)\big).
\label{os7eq13}
\e

Define a $U$-family of elliptic boundary conditions $B_{\th_1,\ldots, \th_k}^{\la_1, \ldots, \la_k}$ for the pullback of the $U$-family $[\nabla_Q]\mapsto\ti D^{\prime\nabla_{\Ad(Q')}}$ of elliptic operators on $Y'$ by restricting the boundary values $\{f_{\th_{j-}}, f_{\th_{j+}}\}_{j=1}^k$ at $[\nabla_Q] \in U$ to belong to the subspace
\[
\left.
\begin{aligned}
f_{\th_{j+}} &\in \Eig^{-\iy,\la_j}\Big(D^{\nabla_{\Ad(Q)}\vert_{X\t \{e^{i\th_j}_+\}}}\circ i_{F_0}\Big),\\
f_{\th_{j-}} &\in \Eig^{\la_j,\iy}\Big(D^{\nabla_{\Ad(Q)}\vert_{X\t \{e^{i\th_j}_-\}}}\circ i_{F_0}\Big).
\end{aligned}
\enskip\right\}B_{\th_1,\ldots, \th_k}^{\la_1, \ldots, \la_k}
\]

We wish to compare $B(1)$ to $B_{\th_1,\ldots, \th_k}^{\la_1, \ldots, \la_k}$. As neither is contained in the other, we introduce an intermediate $U$-family of elliptic boundary conditions $B'$ by
\[
\left.
\begin{aligned}
f_{\th_{j+}} &\in \Eig^{-\iy,\la_j}\Big(D^{\nabla_{\Ad(Q)}\vert_{X\t \{e^{i\th_j}_+\}}}\circ i_{F_0}\Big),\\
f_{\th_{j-}} &\in \Eig^{-\la_j,\iy}\Big(D^{\nabla_{\Ad(Q)}\vert_{X\t \{e^{i\th_j}_-\}}}\circ i_{F_0}\Big).
\end{aligned}
\enskip\right\}B'
\]
For $t=1$ the boundary condition $B(t)$ requires $f_{\th_{j+}}$ to be contained in the $(-\iy,\la_j)$-eigenspaces, $f_{\th_{j-}}$ to be in the $(-\la_j,+\iy)$-eigenspaces, and the matching condition $\Pi^{-\la_j,\la_j}_{\nabla_{Q}\vert_{e^{i\th_j}}}(f_{\th_{j+}})=\Pi^{-\la_j,\la_j}_{\nabla_{Q}\vert_{e^{i\th_j}}}(f_{\th_{j-}})$ on the overlap $(-\la_j,\la_j)$ of the spectrum where both $f_{\th_{j\pm}}$ may have a component. This last condition was not required for $B',$ so $B'/B\cong \Eig^{-\la_j,\la_j}\Big(D^{\nabla_{\Ad(Q)}\vert_{X\t \{e^{i\th_j}_+\}}}\circ i_{F_0}\Big).$ Similarly, the difference between $B_{\th_1,\ldots, \th_k}^{\la_1, \ldots, \la_k}$ and $B'$ is that $f_{\th_{j-}}$ in $B'$ is allowed to have a component in $\Eig^{-\la_j,\la_j}\Big(D^{\nabla_{\Ad(Q)}\vert_{X\t \{e^{i\th_j}_-\}}}\circ i_{F_0}\Big).$ Hence $B' / B(1) \cong B' / B_{\th_1,\ldots, \th_k}^{\la_1, \ldots, \la_k}$, and applying Proposition \ref{osAprop2} twice gives
\e
O^{E'_\bu}_{Q'}\big(B(1)\big)
\cong
O^{E'_\bu}_{Q'}\big(B_{\th_1,\ldots, \th_k}^{\la_1, \ldots, \la_k}\big).
\label{os7eq14}
\e
Combining \eq{os7eq12}--\eq{os7eq14} we conclude
\e
O^{E_\bu}_Q\vert_U
\overset{\cong}{\longra}
O^{E'_\bu}_{Q'}(B_{\th_1,\ldots, \th_k}^{\la_1, \ldots, \la_k}).
\label{os7eq15}
\e
For $1 \leq \ell \leq k$ define (recall the notation $Y_j'$ from Definition~\ref{os7def2})
\begin{align*}
\vartheta_\ell \colon [0,1]\t Y' &\longra Y',\\
\vartheta_\ell|_{[0,1]\t Y_j'}(t,(x,e^{i\th})) &=
\begin{cases}
(x,e^{i\th}), & j<\ell,\\
(x,e^{i[(1-t)\th + t\th_\ell]}), &j=\ell,\\
(x,e^{i[(1-t)\th_{\ell+1} + t\th_\ell]}), &j>\ell.\\
\end{cases}
\end{align*}
Write also $\vartheta_\ell(t) \colon Y' \ra Y'$, $(x,e^{i\th})\mapsto \vartheta_l(t,(x,e^{i\th}))$, $\vartheta_\ell(t,e^{i\th})\colon X \ra Y'$, $x\mapsto \vartheta_\ell(t,(x,e^{i\th}))$, and set $\vartheta_{k+1}(1)\coloneqq \id_{Y'}$. Note $\vartheta_\ell(1) = \vartheta_{\ell-1}(0)$ for $2\leq \ell \leq k+1$. For $1\leq \ell \leq k$ define a $(U \t [0,1])$-family of elliptic boundary conditions $B_\ell$ for the pullback of the $\B_{Q'}$-family of twisted elliptic operators $[\nabla_{Q'}]\mapsto\ti D^{\prime\nabla_{\Ad(Q')}}$ along $U\t[0,1]\t Y' \xrightarrow{\id_{U}\t \vartheta_\ell} U\t Y' \xrightarrow{[\nabla_Q]\mapsto[\nabla_{Q'}]} \B_{Q'}\t Y'$ by restricting the boundary values $\{f_{\th_{j-}}, f_{\th_{j+}}\}_{j=1}^k$ at the parameter $([\nabla_Q],t)$ in $U \t [0,1]$ to belong to the subspace
\begin{align*}
f_{\th_{j+}} &\in \Eig^{-\iy,\la_j}
\big(D^{\vartheta_\ell(t,e^{i\th_j}_+)^*\nabla_{\Ad(Q')}}\circ i_{F_0}\big),
&&
1\leq j \leq \ell,\\
f_{\th_{j+}} &\in \Eig^{-\iy,\la_\ell}\big(D^{\vartheta_\ell(t,e^{i\th_j}_+)^*\nabla_{\Ad(Q')}}\circ i_{F_0}\big),
&&
\ell < j \leq k,\\
f_{\th_{1-}} &\in \Eig^{\la_\ell,\iy}\big(D^{\vartheta_\ell(t,e^{i\th_{1-}})^*\nabla_{\Ad(Q')}}\circ i_{F_0}\big),
&& j=1,\\
f_{\th_{j-}} &\in \Eig^{\la_j,\iy}\big(D^{\vartheta_\ell(t,e^{i\th_j}_-)^*\nabla_{\Ad(Q')}}\circ i_{F_0}\big),
&&
2\leq j \leq \ell,\\
f_{\th_{j-}} &\in \Eig^{\la_\ell,\iy}\big(D^{\vartheta_\ell(t,e^{i\th_j}_-)^*\nabla_{\Ad(Q')}}\circ i_{F_0}\big),
&&
\ell < j \leq k.
\end{align*}
Write $B_\ell(t)$ for the restriction of $B_\ell$ to $U\t\{t\}$. Also, set $B_{k+1}(1)=B_{\th_1,\ldots, \th_k}^{\la_1, \ldots, \la_k}$. For $\ell=2,\ldots,k+1$ the boundary conditions $B_\ell(1)$ and $B_{\ell - 1}(0)$ differ by $\Eig^{\la_{\ell-1}, \la_\ell}\big(D^{\nabla_{\Ad(Q)}\vert_{X\t \{e^{i\th_\ell}\}}}\circ i_{F_0}\big)$, so as in \eq{os7eq14} applying \eq{osAeq2} twice gives
\begin{equation*}
O^{E'_\bu}_{Q'}\big( B_\ell(1) \big)
\cong
O^{E'_\bu}_{Q'}\big( B_{\ell-1}(0) \big)
\ot
O^{\la_{\ell-1},\la_\ell}_{Q\vert_{e^{i\th_\ell}}}\big\vert_U.
\end{equation*}
Using this and deforming for each $\ell = k, \ldots, 1$ using fibre transport through $t\in [0,1]$ as in \eq{os7eq13} gives the following isomorphisms over $U$:
\e
\begin{aligned}
O^{E_\bu}_Q\big\vert_U \hspace{1ex}&\overset{\mathclap{\eq{os7eq15}}}{\cong}\hspace{1ex} O^{E'_\bu}_{Q'}\big(B_{\th_1,\ldots, \th_k}^{\la_1, \ldots, \la_k}\big)\\
&=\hspace{1ex} O^{E'_\bu}_{Q'}(B_{k+1}(1))\\
&\cong\hspace{1ex} O^{E'_\bu}_{Q'}(B_k(0)) \ot O^{\la_k,\la_{k+1}}_{Q\vert_{e^{i\th_{k+1}}}}\big\vert_U\\
&\cong\hspace{1ex} O^{E'_\bu}_{\vartheta_k(1)^*Q'}(B_k(1)) \ot O^{\la_0,\la_1}_{Q\vert_{e^{i\th_1}}}\big\vert_U\\
&\cong\hspace{1ex} O^{E'_\bu}_{\vartheta_{k-1}(0)^*Q'}(B_{k-1}(0)) \ot O^{\la_0,\la_1}_{Q\vert_{e^{i\th_1}}}\big\vert_U \ot O^{\la_{k-1},\la_k}_{Q\vert_{e^{i\th_k}}}\big\vert_U\\
&\cong\hspace{1ex} O^{E'_\bu}_{\vartheta_{k-1}(1)^*Q'}(B_{k-1}(1)) \ot O^{\la_0,\la_1}_{Q\vert_{e^{i\th_1}}}\big\vert_U \ot O^{\la_{k-1},\la_k}_{Q\vert_{e^{i\th_k}}}\big\vert_U\\
&\enskip\vdots\\
&\cong\hspace{1ex} O^{E'_\bu}_{\vartheta_1(1)^*Q'}(B_1(1)) \ot \ts\bigot_{j=1}^k O^{\la_{j-1},\la_j}_{Q\vert_{e^{i\th_j}}}\big\vert_U\\
&\overset{\mathclap{\eq{os7eq15}}}{\cong}\hspace{1ex} \Nu_{Q,P}^*(O^{E_\bu}_{P\t\cS^1}) \ot\ts\bigot_{j=1}^k O^{\la_{j-1},\la_j}_{Q\vert_{e^{i\th_j}}}\big\vert_U.
\end{aligned}
\label{os7eq16}
\e
In the last step we apply the inverse isomorphism of \eq{os7eq15} with $P\t\cS^1$ in place of $Q$. The claimed isomorphism \eq{os7eq9} follows.

Property (i) for $Q=P\t\cS^1$ and $\nabla_Q=\pi_X^*(\nabla_P)$ follows by inspection of \eq{os7eq16}: as the family of connections is constant, the isomorphisms in the third and fourth line of \eq{os7eq16} reduce to the equalities $O^{E'_\bu}_{Q'}(B_{2}(1))=O^{E'_\bu}_{Q'}(B_{1}(0))=O^{E'_\bu}_{Q'}(B_{1}(1))$ and then we apply the inverse isomorphism of \eq{os7eq15}, so that \eq{os7eq16} reduces to the identity map $O^{E_\bu}_{P\t\cS^1}\big\vert_U = \Nu_{Q,P}^*(O^{E_\bu}_{P\t\cS^1})$. Property (ii) holds since all of our constructions are compatible with direct sums. Finally (iii) follows by direct inspection of \eq{os7eq15} and \eq{os7eq16}.
\end{proof}

Square roots are connected to orientations by the following lemma.

\begin{lem}
\label{os7lem3}
Let\/ $V$ be a complex vector space equipped with a real structure $V\to\bar{V},$ $v\mapsto\bar{v}.$ Let\/ ${\om \in \left(\det_\C V\right)^{\ot 2}}$ be real, non-zero, and suppose a real square root of\/ $\om$ exists \textup(meaning there is \/ $\eta\in\det_\C V$ with\/ $\eta\ot\eta=\om$ and\/ $\bar\eta=\eta,$ using the induced real structure on $\det_\C V$\textup). Then the set of solutions\/ $\eta \in \det_\C V$ to\/ $\eta^{\ot 2} = \omega$ is canonically identified with the\/ $\Z_2$-torsor\/ $O(V_\R)$ of orientations of the real form\/ $V_\R=\{v\in V\mid v=\bar{v}\}.$
\end{lem}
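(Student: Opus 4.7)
The strategy is to reduce to two elementary facts: (a) every square root of $\om$ is real, forming a $\Z_2$-torsor $\{\pm\eta_0\}\subset\det_\C V$; and (b) nonzero real elements of $\det_\C V$ canonically correspond to orientations of $V_\R$. Combining them yields a canonical $\Z_2$-equivariant bijection with $O(V_\R)$.

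For (b), I would use the real structure to decompose $V=V_\R\oplus iV_\R$ canonically, giving a natural isomorphism $V\cong V_\R\ot_\R\C$. Taking top exterior powers yields a canonical isomorphism of complex lines $\det_\C V\cong (\det_\R V_\R)\ot_\R\C$, under which the induced conjugation on $\det_\C V$ acts as complex conjugation on the second factor. Hence the real part $\{\eta\in\det_\C V\mid\bar\eta=\eta\}$ is exactly $\det_\R V_\R\subset\det_\C V$. Since $O(V_\R)=(\det_\R V_\R\setminus\{0\})/(0,\infty)$, every nonzero real element of $\det_\C V$ represents a unique orientation of $V_\R$, namely the one for which it is positive. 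Fact (a) is immediate: as $\det_\C V$ is one-dimensional over $\C$, any two solutions $\eta,\eta'$ of $\xi^{\ot 2}=\om$ differ by $a\in\C^*$ with $a^2=1$, so $\eta'=\pm\eta$, and a real solution $\eta_0$ exists by hypothesis, forcing the full solution set to be $\{\pm\eta_0\}$.

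The canonical identification then sends $\eta\in\{\pm\eta_0\}$ to the orientation of $V_\R$ it represents under (b); this is manifestly $\Z_2$-equivariant (both torsor actions being sign reversal) and bijective, and it is canonical because every ingredient is natural in the pair $(V,v\mapsto\bar v)$. There is no substantive obstacle; the only point warranting care is checking that the identification $(\det_\C V)_\R=\det_\R V_\R$ is compatible with the passage to tensor squares, so that the condition ``$\eta$ is positive in $\det_\R V_\R$'' is intrinsic to $\om=\eta^{\ot 2}$. This is verified on decomposable elements of the form $v_1\w\cdots\w v_n$ with $v_i\in V_\R$, on which both the real structure on $\det_\C V$ and the natural real structure on $\det_\R V_\R\ot_\R\C$ act trivially.
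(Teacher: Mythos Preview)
Your proposal is correct and follows essentially the same approach as the paper: identify the two solutions $\pm\eta_0$ (both real by hypothesis), use the canonical isomorphism $(\det_\C V)_\R\cong\det_\R V_\R$, and map each solution to the orientation class it represents. The paper's proof is just terser, ending with the observation that any map of $\Z_2$-torsors is automatically bijective; your final paragraph's concern about compatibility with tensor squares is not actually needed, since the identification $(\det_\C V)_\R\cong\det_\R V_\R$ is defined independently of~$\om$.
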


\begin{proof}
As $\det_\C V$ is one-dimensional, the two solutions $\eta^{\ot2}=\om$ are $\pm\eta,$ which by assumption are automatically non-zero and real. Using the isomorphism $(\det_\C V)_\R\cong\det_\R(V_\R)$ we get a map $\{\eta\mid \eta^{\ot 2}=\om\}\to(\det_\R(V_\R)\setminus\{0\})/\R_{>0}=O(V_\R)$ of $\Z_2$-torsors, and such maps are always bijective.
\end{proof}

\begin{prop}
\label{os7prop2}
Work in the situation of Definition\/ {\rm\ref{os7def2},} with\/ $(\th_1,\ldots,\th_k,\ab\la_1,\ab\ldots,\la_k)$ as defined there. Over\/ $U=U^{\la_1,\ldots,\la_k}_{[\th_1,\th_2],\ldots,[\th_k,\th_{k+1}]}(Q) \subset \B_Q$ we have a canonical isomorphism
\e
\bigot\nolimits_{j=1}^k O^{\la_j,\la_{j+1}}_{Q\vert_{e^{i\th_j}}}\big\vert_U
\longra\Ga_{Q,P}^*(M_P^{F_\bu})\big\vert_U.
\label{os7eq17}
\e
Moreover, \eq{os7eq17} has the following properties:
\begin{itemize}
\setlength{\itemsep}{0pt}
\setlength{\parsep}{0pt}
\item[{\rm(i)}] Suppose\/ $Q=P\t\cS^1,$ and\/ $\nabla_Q=\pi_X^*(\nabla_P),$  and\/ $k=1,$ as in Example\/ {\rm\ref{os7ex1}}. Then both sides of\/ \eq{os7eq17} are canonically trivial, and\/ \eq{os7eq17} identifies these trivializations.
\item[{\rm(ii)}] Let\/ $Q_1, Q_2 \ra X\t\cS^1$ and\/ $P_1, P_2 \ra X$ be principal\/ {\rm$\U(m_1)$-, $\U(m_2)$}-bundles with\/ $P_a \cong Q_a\vert_{X\t\{1\}}$ for\/ $a=1,2$. Over\/ $U^{\la_1,\ldots,\la_k}_{[\th_1,\th_2],\ldots,[\th_k,\th_{k+1}]}(Q_1,Q_2)\subset\B_{Q_1}\t\B_{Q_2}$ in \eq{os7eq10} we have a commutative diagram:
\begin{equation*}
\!\!\!\!\xymatrix@!0@C=295pt@R=40pt{
*+[r]{\Ga_{Q_1,P_1}^*(M_{P_1}^{F_\bu}) \boxtimes \Ga_{Q_2,P_2}^*(M_{P_2}^{F_\bu})} \ar[d]^{\eq{os5eq3}}\ar[r]_(0.52){\eq{os7eq17}\bt\eq{os7eq17}}
&
*+[l]{\bigot_{j=1}^k O^{\la_j,\la_{j+1}}_{Q_1\vert_{e^{i\th_j}}}
\boxtimes O^{\la_j,\la_{j+1}}_{Q_2\vert_{e^{i\th_j}}}}\ar[d]_{\bigot_{j=1}^k\eq{os7eq8}}
\\
*+[r]{\Ga_{Q_1\op Q_2,P_1\op P_2}^*(M_{P_1\op P_2}^{F_\bu})}\ar[r]^(0.52){\eq{os7eq17}}
&
*+[l]{\bigot_{j=1}^k O^{\la_j,\la_{j+1}}_{Q_1\op Q_2\vert_{e^{i\th_j}}}.\!}
}
\end{equation*}
\item[{\rm(iii)}] If\/ $j=1,\ldots,k$ and\/ $\th_{j-1}<\th_*<\th_j,$ $\la_*>0$ we may replace\/ $(\th_1,\ab\ldots,\ab\th_k,\ab\la_1,\ab\ldots,\ab\la_k)$ by {\rm\eq{os7eq2},} an \begin{bfseries}insertion\end{bfseries} in the sense of Definition\/ {\rm\ref{os7def2}}. This changes the left hand side of\/ \eq{os7eq17} over\/ $U\cap U^{\la_*}_{[\th_*,\th_j]}$ by replacing\/ $O^{\la_{j-1},\la_j}_{Q\vert_{e^{i\th_j}}}$ with\/ $O^{\la_{j-1},\la_*}_{Q\vert_{e^{i\th_*}}}\ot_{\Z_2}O^{\la_*,\la_j}_{Q\vert_{e^{i\th_j}}}$. The corresponding maps \eq{os7eq17} are then connected via\/ {\rm\eq{os7eq11},} in the sense of a commutative triangle.
\end{itemize}
\end{prop}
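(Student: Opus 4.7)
The strategy is to construct, for each $[\nabla_Q] \in U$ and loop $\ga = \Ga_{Q,P}([\nabla_Q])$, a canonical square root of $\ga^*(K_P^{F_\bu})\vert_{I_j}$ on each cut interval $I_j = [\th_j,\th_{j+1}]$ using the antilinear self-adjointness of $F_\bu$, and then to identify the glueing ambiguity at each endpoint $e^{i\th_j}$ with the orientation bundle $O^{\la_{j-1},\la_j}_{Q\vert_{e^{i\th_j}}}$ via Lemma \ref{os7lem3}. The isomorphism \eq{os7eq17} will then send a tuple of orientations $(o_1,\ldots,o_k)$ to the globally glued square root over $\cS^1$.

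Fix a continuous Hermitian metric on $F$ and set $V_\th^{(\la_j)} := \Eig^{-\la_j,\la_j}(D_\th)$, where $D_\th = D^{\nabla_{\Ad(Q)}\vert_{X\t\{e^{i\th}\}}}$. Over $I_j \t U$ this is a continuous finite-dimensional complex vector bundle (invariant under $i_F$ by antilinear self-adjointness, and well-defined since $\la_j \notin \Spec(D_\th)$). Combining (a) the invertibility of $D_\th$ on $(V_\th^{(\la_j)})^\perp$, (b) the identification $\Coker D_\th \cong \overline{\Ker D_\th}$ from antilinear self-adjointness, and (c) the metric identification $\bar L \cong L^*$ for complex lines $L$, yields a canonical continuous isomorphism $\det_\C D_\th \cong (\det_\C V_\th^{(\la_j)})^{\ot 2}$. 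Hence $R_j := \det_\C V^{(\la_j)}$ is a canonical continuous square root of $\ga^*(K_P^{F_\bu})$ over $I_j \t U$.

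At the endpoint $e^{i\th_j}$, assuming without loss of generality $\la_{j-1} < \la_j$, the splitting $V_{\th_j}^{(\la_j)} = V_{\th_j}^{(\la_{j-1})} \op W_{\th_j}$ with $W_{\th_j} := \Eig^{(-\la_j,-\la_{j-1}) \cup (\la_{j-1},\la_j)}(D_{\th_j})$ isolates a complex subspace whose canonical real structure has real form $W_{\th_j,\R} = \Eig^{(\la_{j-1},\la_j)}(D_{\th_j})$, on which $D_{\th_j}$ acts as a positive self-adjoint operator. The canonical element $\det_\C(D_{\th_j}\vert_{W_{\th_j}}) \in (\det_\C W_{\th_j})^{\ot 2}$ is therefore real and positive, and Lemma \ref{os7lem3} canonically identifies its complex square roots in $\det_\C W_{\th_j}$ with orientations of $W_{\th_j,\R}$, i.e., with the fibre $O^{\la_{j-1},\la_j}_{Q\vert_{e^{i\th_j}}}\vert_{[\nabla_Q]}$. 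Each orientation $o_j$ thus furnishes a canonical identification $R_{j-1}\vert_{\th_j} \cong R_j\vert_{\th_j}$ of square roots of $\det_\C D_{\th_j}$. A tuple $(o_1,\ldots,o_k)$ then glues the $R_j$ into a global square root $R$ of $\ga^*(K_P^{F_\bu})$ over $\cS^1$; simultaneously flipping any two orientations $o_i, o_{i'}$ rescales $R$ by the global automorphism $-1$ and so produces an isomorphic square root. The construction therefore descends to a $\Z_2$-equivariant map of $\Z_2$-torsors $\bigot_j O^{\la_{j-1},\la_j}\vert_U \ra \Ga_{Q,P}^*(M_P^{F_\bu})\vert_U$, which is automatically an isomorphism, giving \eq{os7eq17}.

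For property (i), with $Q = P\t\cS^1$, $\nabla_Q = \pi_X^*(\nabla_P)$, and $k=1$, we have $\la_0 = \la_1$, so $W_{\th_1} = 0$; both sides are then canonically trivial and inspection shows the identification is the identity. Property (ii) follows from the decomposition $\Ad(Q_1 \op Q_2) = \Ad(Q_1) \op \Ad(Q_2) \op (Q_1 \ot_\C \bar Q_2 \op \mathrm{c.c.})$ and multiplicativity of determinants: the off-diagonal block is $\C$-linear with respect to the extra complex structure on $Q_1 \ot_\C \bar Q_2$, so its intermediate eigenspaces are canonically oriented, while the diagonal blocks yield the tensor product of orientations. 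For (iii), the insertion of $(\th_*,\la_*)$ refines $\Eig^{\la_{j-1},\la_j}(D_{\th_j}) = \Eig^{\la_{j-1},\la_*}(D_{\th_j}) \op \Eig^{\la_*,\la_j}(D_{\th_j})$, and the orientation decomposition \eq{os7eq11} matches the additivity of Lemma \ref{os7lem3} under the corresponding splitting of $W_{\th_j}$, together with fibre transport from $\th_j$ to $\th_*$. The principal technical obstacle is verifying continuity of the constructed isomorphism in $[\nabla_Q] \in U$: this reduces to continuity of the spectral projections onto $V^{(\la_j)}$ and of the canonical element $\det_\C(D\vert_W)$ as $[\nabla_Q]$ varies, both of which follow from the hypothesis that $\pm\la_{j-1}, \pm\la_j$ avoid the spectrum throughout $U$.
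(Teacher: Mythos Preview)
Your proposal is correct and takes essentially the same approach as the paper: build canonical local square roots of $\ga^*(K_P^{F_\bu})$ on each cut interval via a spectral cutoff, then invoke Lemma~\ref{os7lem3} to identify the endpoint gluing ambiguity with the orientation torsors. The only cosmetic difference is that you work directly with $V_\th^{(\la_j)}=\Eig^{-\la_j,\la_j}(D_\th)$ as an $i_{F_0}$-complex space, whereas the paper phrases the same space as $\Eig^{0,\la_j^2}(\bar D D)$ (these coincide since $\bar D D=D^2$ as real operators), and your canonical element $\det_\C(D_{\th_j}\vert_{W_{\th_j}})$ is the paper's explicit $\om_j$.
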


\begin{proof} Recall from Definition \ref{os5def2} that $D$ is antilinear self-adjoint, $D^*=\bar D$. Define an open substack $V^\la =\bigl\{[\nabla_P] \in \B_P : \la^2 \notin \Spec(\bar D{}^{\nabla_{\Ad(P)}}D^{\nabla_{\Ad(P)}})\bigr\}\subset\B_P$ for $\la>0$. By definition of the topology for determinant line bundles (e.g.\ \cite[Def.~3.4]{Upme}), for any $\la > 0$ and $[\nabla_P] \in V^\la$, we have canonical isomorphisms
\e
\begin{aligned}
& K^{F_\bu}_P\vert_{[\nabla_P]}\\
\overset{\cong}{\longra}&\det_\C \Eig^{0,\la^2}\big(
\bar D{}^{\nabla_{\Ad(P)}}D^{\nabla_{\Ad(P)}}
\big)\ot\det_\C \Eig^{0,\la^2}\big(
D^{\nabla_{\Ad(P)}}\bar D{}^{\nabla_{\Ad(P)}}
\big)^*\\
\overset{\cong}{\longra}&\det_\C \Eig^{0,\la^2}\big(
\bar D{}^{\nabla_{\Ad(P)}}D^{\nabla_{\Ad(P)}}
\big)^{\ot 2},
\end{aligned}
\label{os7eq18}
\e
using the Hermitian metrics on the eigenbundles in the second step. These isomorphisms are continuous over $V^\la$, so we have a canonical square root $\big(K^{F_\bu}_P\big)^{1/2}_\la$ of the complex determinant line bundle over the open substack $V^\la\subset\B_P$. Define
\begin{align*}
\Ga : U \t \cS^1 &\longra \B_P, & \Ga: ([\nabla_Q],e^{i\th}) &\longmapsto
[\nabla_Q\vert_{X\t\{e^{i\th}\}}]
\end{align*}
and let $\Ga_j$ be the restriction of $\Ga$ to $U \t \cS^1_{[\th_j, \th_{j+1}]} \ra V^{\la_j}$ for $j=1,\ldots,k$. We shall construct a square root of $\Ga^*(K^{F_\bu}_P)$ by patching together the square roots $\Ga_j^*\bigl((K^{F_\bu}_P){}^{1/2}_{\la_j}\bigr) \ra U \t \cS^1_{[\th_j, \th_{j+1}]}$ over the intersection points $e^{i\th_1},\ldots,e^{i\th_k}$, compatible with \eq{os7eq18}. By construction of $\big(K^{F_\bu}_P\big){}^{1/2}_{\la_j}$ and $\big(K^{F_\bu}_P\big){}^{1/2}_{\la_{j+1}}$ this requires isomorphisms
\e
\begin{aligned}
&\det_\C \Eig^{0,\la_j^2}\big(\bar D{}^{\nabla_{\Ad(Q)}\vert_{X\t \{e^{i\th_j}\}}}D^{\nabla_{\Ad(Q)}\vert_{X\t \{e^{i\th_j}\}}}\big)\\
&\longra\det_\C \Eig^{0,\la_{j+1}^2}\big(\bar D{}^{\nabla_{\Ad(Q)}\vert_{ X\t \{e^{i\th_j}\}}}D^{\nabla_{\Ad(Q)}\vert_{X\t \{e^{i\th_j}\}}}\big).
\end{aligned}
\label{os7eq19}
\e
For compatibility with \eq{os7eq18}, and by the way \eq{os7eq18} is constructed (see e.g.\ \cite[(3.5)]{Upme}), these isomorphisms must square to multiplication by
\begin{align*}
\om_j =&\; \Big(v_1 \wedge \cdots \wedge v_\ell\Big) \ot \Big(D^{\nabla_{\Ad(Q)}\vert_{X\t \{e^{i\th_j}\}}}v_1 \wedge\cdots\wedge D^{\nabla_{\Ad(Q)}\vert_{X\t \{e^{i\th_j}\}}}v_\ell \Big)\\
&\in \det_\C \Eig^{\la_j^2,\la_{j+1}^2}\big(\bar D{}^{\nabla_{\Ad(Q)}\vert_{X\t \{e^{i\th_j}\}}}D^{\nabla_{\Ad(Q)}\vert_{X\t \{e^{i\th_j}\}}}\big)^{\ot 2}.
\end{align*}
Here $v_1,\ldots, v_\ell \in \Eig^{\la_j^2,\la_{j+1}^2}\big(\bar D{}^{\nabla_{\Ad(Q)}\vert_{X\t \{e^{i\th_j}\}}}D^{\nabla_{\Ad(Q)}\vert_{X\t \{e^{i\th_j}\}}}\big)$ denotes an arbitrary {\it complex}\/ orthonormal basis with respect to $i_{F_0}$. For example, one may take $v_1,\ldots, v_\ell$ to be a real orthonormal basis of eigenfunctions of the corresponding real form $\Eig^{\la_j,\la_{j+1}}(D^{\nabla_{\Ad(Q)}\vert_{X\t \{e^{i\th_j}\}}})$, where $D$ is now viewed as a self-adjoint operator over $\R$. This example shows that $\om_j$ is real and, since $\la_j, \la_{j+1} >0$, that $\om_j$ has real square roots. Therefore, Lemma \ref{os7lem3} canonically parametrizes the two square roots of $\omega_j$ by $O^{\la_j,\la_{j+1}}_{Q\vert_{e^{i\th_j}}}$.  From $\bigotimes_{j=1}^k O^{\la_j,\la_{j+1}}_{Q\vert_{e^{i\th_j}}}\big\vert_U$ we may therefore construct a square root of $\Ga^*(K^{F_\bu}_P) \ra U \t \cS^1$, an element of $\Ga_{Q,P}^*(M_P^{F_\bu})\big\vert_U$, by patching with multiplication by the square root of $\om_j$ fixed by the orientations. As flipping the orientation twists the resulting square root once, this map factors through an isomorphism~\eq{os7eq17}.

For (i) note that \eq{os7eq19} is the identity map in this case (recall that $\la_{k+1}=\la_1$), and the canonical trivialization of $O_{Q\vert_{e^{i\th_1}}}^{\la_1,\la_1}$ corresponds to the identity square root. Property (ii) is obvious, as all our constructions are compatible with direct sums. Finally, in (iii) one factors the patching of $[\th_{j-1},\th_j]$ to $[\th_j,\th_{j+1}]$ and the isomorphism \eq{os7eq19} into one additional step at $e^{i\th_*}$, where the square root gets twisted further according to the orientation.
\end{proof}

We can now complete the proof of Theorem \ref{os5thm2}. Let $X,F_\bu,G$, $P\ra X$ and $Q\ra X\t\cS^1$ be given, as in Definitions \ref{os5def1} and \ref{os5def2}. Suppose $(\th_1,\ldots,\th_k,\la_1,\ab\ldots,\la_k)$ is as in Definition \ref{os7def2}, and write $U=U^{\la_1,\ldots,\la_k}_{[\th_1,\th_2],\ldots,[\th_k,\th_{k+1}]}(Q) \subset \B_Q$ as in Propositions \ref{os7prop1} and \ref{os7prop2}. Consider the commutative diagram of principal $\Z_2$-bundles on~$U$:
\e
\begin{gathered}
\xymatrix@C=180pt@R=15pt{
*+[r]{\check O_Q^{E_\bu}\ot_{\Z_2}\Nu_{Q,P}^*(\check O_{P\t\cS^1}^{E_\bu})\vert_U} \ar[r]_(0.55){\ga_{Q,P}^{F_\bu}\vert_U} \ar[d]^{\eq{os7eq3}} & *+[l]{\Ga_{Q,P}^*(M^{F_\bu}_P)\vert_U} \\
*+[r]{O^{E_\bu}_Q \ot_{\Z_2} \Nu_{Q,P}^*(O^{E_\bu}_{P\t\cS^1})\vert_U} \ar[r]^(0.55){\eq{os7eq9}} & *+[l]{\bigot_{j=1}^k O^{\la_j,\la_{j+1}}_{Q\vert_{e^{i\th_j}}}\vert_U.\!} \ar[u]^{\eq{os7eq17}} }	
\end{gathered}
\label{os7eq20}
\e
We claim that there exists a unique morphism $\ga_{Q,P}^{F_\bu}$ in \eq{os5eq4} such that \eq{os7eq20} commutes for all such~$(\th_1,\ldots,\th_k,\la_1,\ab\ldots,\la_k)$.

To prove this, first observe that the first part of Lemma \ref{os7lem1} implies that the open substacks $U\subset\B_Q$ from different choices of $(\th_1,\ldots,\la_k)$ cover $\B_Q$. Hence the claim holds if and only if whenever $(\th_1,\ldots,\la_k)$ and $(\th'_1,\ldots,\la'_{k'})$ are alternative choices yielding open substacks $U,U'\subset\B_Q$, then the values of $\ga_{Q,P}^{F_\bu}\vert_U,\ga_{Q,P}^{F_\bu}\vert_{U'}$ determined by \eq{os7eq20} for $(\th_1,\ldots,\la_k)$ and $(\th'_1,\ldots,\la'_{k'})$ agree on the overlap~$U\cap U'\subset\B_Q$.

So let $[\nabla_Q]\in U\cap U'$. Then $(\th_1,\ldots,\la_k)$ and $(\th'_1,\ldots,\la'_{k'})$ are cuts for $[\nabla_Q]$, as in Definition \ref{os7def2}, and the second part of Lemma \ref{os7lem1} says that we can join $(\th_1,\ldots,\la_k)$ and $(\th'_1,\ldots,\la'_{k'})$ in cuts for $[\nabla_Q]$ by a finite chain of insertions and deletions. Therefore, to show that $\ga_{Q,P}^{F_\bu}\vert_U,\ga_{Q,P}^{F_\bu}\vert_{U'}$ agree near $[\nabla_Q]$, it is enough to show that they agree near $[\nabla_Q]$ when $(\th_1,\ldots,\la_k)$ and $(\th'_1,\ldots,\la'_{k'})$ are related by an insertion or a deletion, say if $(\th'_1,\ldots,\la'_{k'})$ is \eq{os7eq2}. This follows from Propositions \ref{os7prop1}(iii) and \ref{os7prop2}(iii), and thus the claim holds.

Now that $\ga_{Q,P}^{F_\bu}$ in \eq{os5eq4} is well-defined, Theorem \ref{os5thm2}(i) is a direct consequence of Propositions \ref{os7prop1}(i) and \ref{os7prop2}(i), since without loss we may take $k=1$, $\th_1=0$ and any $\la_1\in(0,\iy)\sm \Spec D^{\nabla_{\Ad(P)}}$. Similarly, Theorem \ref{os5thm2}(ii) follows from Propositions \ref{os7prop1}(ii) and \ref{os7prop2}(ii), as one can show in a similar way to Lemma \ref{os7lem1} that the open substacks $U^{\la_1,\ldots,\la_k}_{[\th_1,\th_2],\ldots,[\th_k,\th_{k+1}]}(Q_1,Q_2)\subset\B_{Q_1}\t\B_{Q_2}$ in \eq{os7eq10} for all $(\th_1,\ldots,\la_k)$ cover $\B_{Q_1}\t\B_{Q_2}$. This completes the proof of Theorem~\ref{os5thm2}.

\section{Proof of Theorem \ref{os5thm3}}
\label{os8}

Work in the situation of Definitions \ref{os5def1} and \ref{os5def2} and Theorem \ref{os5thm3}, with $X,F_\bu$ and $E_\bu$ on $X\t\cS^1$ fixed. We must establish 1-1 correspondences between choices of data (a),(b),(c) and (d) in Theorem \ref{os5thm3}. We will prove:
\begin{itemize}
\setlength{\itemsep}{0pt}
\setlength{\parsep}{0pt}
\item[(i)] Data (a) is in natural 1-1 correspondence with data (b);
\item[(ii)] Data (b) is in natural 1-1 correspondence with data (c); and
\item[(iii)] Data (c) is in natural 1-1 correspondence with data (d).
\end{itemize}
The theorem follows. For (i), Theorem \ref{os5thm1} gives a 1-1 correspondence between data (a),(b), by showing that any $\hat\si_{X\t\U(N)}^{F_\bu}$ in (a) extends uniquely to $\si_P^{F_\bu}$ in (b) for all $\U(m)$-bundles $P\ra X$ such that~$\si_{X\t\U(N)}^{F_\bu}=\hat\si_{X\t\U(N)}^{F_\bu}$.

For (ii), here is how to map from data (b) to data (c). Suppose we are given spin structures $\si_P^{F_\bu}$ as in (b). Then for each $\ga\in\Map_{C^0}(\cS^1,\B_P)$, $\ga^*(\si_P^{F_\bu})$ is a square root of $\ga^*(K_P^{F_\bu})$, and thus a point of $M^{F_\bu}_P\vert_\ga$. This defines a trivialization of $M^{F_\bu}_P\ra \Map_{C^0}(\cS^1,\B_P)$. Hence \eq{os5eq4} in Theorem \ref{os5thm2} induces a trivialization of $\check O_Q^{E_\bu}\ot_{\Z_2}\Nu_{Q,P}^*(\check O_{P\t\cS^1}^{E_\bu})$, as in (c). Theorem \ref{os5thm2}(i),(ii) imply~(c)(i),(ii).

We will prove that this map (b) $\Ra$ (c) is a bijection. Let $P\ra X$ be a principal $\U(m)$-bundle, and consider spin structures on $\B_P$, that is, equivalence classes $[L,\io]$ of a line bundle $L\ra\B_P$ and isomorphism $\io:L^{\ot^2}\ra K_P^{F_\bu}$. Write $\dot L=L\sm 0(\B_P),$ $\dot K_P^{F_\bu}=K_P^{F_\bu}\sm 0(\B_P)$ for the complements of the zero sections, which are principal $\C^*$-bundles over $\B_P$. Define a morphism $\jmath:\dot L\ra\dot K_P^{F_\bu}$ by $\jmath(l)=\io(l\ot l)$. Then $\jmath$ is a double cover of $\dot K_P^{F_\bu}$, so we can make it into a principal $\Z_2$-bundle over $\dot K_P^{F_\bu}$, with $\Z_2$-action multiplication by $\pm 1$ on the fibres of~$\dot L\subset L$.

As $\B_P,\C^*$ are connected, $\dot K_P^{F_\bu}$ is connected, so fixing a base-point $x_0\in\dot K_P^{F_\bu}$ to define the fundamental group $\pi_1(\dot K_P^{F_\bu})$, principal $\Z_2$-bundles on $\dot K_P^{F_\bu}$ up to isomorphism are in 1-1 correspondence with group morphisms $\la_P:\pi_1(\dot K_P^{F_\bu})\ra\Z_2$. Write $[\nabla_P]=\pi(x_0)\in\B_P$. The inclusion $\C^*\cdot x_0=\dot K_P^{F_\bu}\vert_{[\nabla_P]}\hookra\dot K_P^{F_\bu}$ induces a morphism $\mu_P:\Z=\pi_1(\C^*)\ra\pi_1(\dot K_P^{F_\bu})$. Over this $\C^*$-fibre $\dot K_P^{F_\bu}\vert_{[\nabla_P]}$, $\jmath:\dot L\ra\dot K_P^{F_\bu}$ is the nontrivial double cover $\C^*\ra\C^*$ mapping $z\mapsto z^2$, so $\la_P\ci\mu_P:\Z\ra\Z_2$ maps~$n\mapsto (-1)^n$.

A morphism $\la_P:\pi_1(\dot K_P^{F_\bu})\ra\Z_2$ corresponds to a principal $\Z_2$-bundle $\jmath:\dot L\ra\dot K_P^{F_\bu}$ coming from a spin structure $[L,\io]$ on $\B_P$ if and only if $\la_P\ci\mu_P(n)=(-1)^n$ for $n\in\Z$. This gives a 1-1 correspondence between spin structures $\si_P^{F_\bu}=[L,\io]$ on $\B_P$, as in (b), with group morphisms $\la_P:\pi_1(\dot K_P^{F_\bu})\ra\Z_2$ such that $\la_P\ci\mu_P(n)=(-1)^n$ for~$n\in\Z$.

Let $[\de]\in \pi_1(\dot K_P^{F_\bu})$ be the equivalence class of a continuous loop $\de:\cS^1\ra\dot K_P^{F_\bu}$. We may take $\de$ to be smooth in the appropriate sense. Then $\bar\de=\pi\ci\de:\cS^1\ra\B_P$ is a (smooth) loop in $\B_P$ based at $[\nabla_P]$, so $\bar\de\in\Map_{C^0}(\cS^1,\B_P)$. This determines a (smooth) principal $\U(m)$-bundle $Q_{\bar\de}\ra X\t\cS^1$, unique up to isomorphism, with $Q_{\bar\de}\vert_{X\t\{1\}}\cong P$, and a partial connection on $Q_{\bar\de}$ in the $X$ directions on $X\t\cS^1$. We may lift this to a connection $\nabla_{Q_{\bar\de}}$ on $Q_{\bar\de}$, and hence a point $[\nabla_{Q_{\bar\de}}]\in\B_{Q_{\bar\de}}$. Definition \ref{os5def1} implies that~$\Ga_{Q_{\bar\de},P}([\nabla_{Q_{\bar\de}}])=\bar\de$.

Then $\de:\cS^1\ra\dot K_P^{F_\bu}$ is a nonvanishing section of $\bar\de^*(K_P^{F_\bu})\ra\cS^1$, and so determines a trivialization of $\bar\de^*(K_P^{F_\bu})\ra\cS^1$, and hence a square root of $\bar\de^*(K_P^{F_\bu})\ra\cS^1$, that is, a point $m_\de$ of $M_P^{F_\bu}\vert_{\bar\de}$. From \eq{os5eq4} we have
\begin{equation*}
\ga_{Q_{\bar\de},P}^{F_\bu}\vert_{[\nabla_{Q_{\bar\de}}]}:(\check O_{Q_{\bar\de}}^{E_\bu}\ot_{\Z_2}\Nu_{Q_{\bar\de},P}^*(\check O_{P\t\cS^1}^{E_\bu}))\vert_{[\nabla_{Q_{\bar\de}}]}\,{\buildrel\cong\over\longra}\,M^{F_\bu}_P\vert_{\bar\de},
\end{equation*}
since $\Ga_{Q_{\bar\de},P}([\nabla_Q])=\bar\de$. Thus $\ga_{Q_{\bar\de},P}^{F_\bu}\vert^{-1}_{[\nabla_{Q_{\bar\de}}]}(m_\de)\in (\check O_{Q_{\bar\de}}^{E_\bu}\ot_{\Z_2}\Nu_{Q_{\bar\de},P}^*(\check O_{P\t\cS^1}^{E_\bu}))\vert_{[\nabla_{Q_{\bar\de}}]}$.

Suppose now we are given data $\check\Om^{E_\bu}_Q$ for all $\U(m)$-bundles $Q\ra X\t\cS^1$ as in Theorem \ref{os5thm3}(c). Define $\la_P:\pi_1(\dot K_P^{F_\bu})\ra\Z_2$ by $\la_P([\de])=\check\Om^{E_\bu}_{Q_{\bar\de}}(\ga_{Q_{\bar\de},P}^{F_\bu}\vert^{-1}_{[\nabla_{Q_{\bar\de}}]}(m_\de))$. As $\check\Om^{E_\bu}_{Q_{\bar\de}}(\ga_{Q_{\bar\de},P}^{F_\bu}\vert^{-1}_{[\nabla_{Q_{\bar\de}}]}(m_\de))$ is unchanged under continuous deformations of $\de:\cS^1\ra\dot K_P^{F_\bu}$, this $\la_P$ is well defined as a map of sets. We will prove that $\la_P:\pi_1(\dot K_P^{F_\bu})\ra\Z_2$ is a group morphism, with~$\la_P\ci\mu_P(n)=(-1)^n$. 

First let $\de_0:\cS^1\ra\dot K_P^{F_\bu}$ be the constant loop at $x_0$, so $[\de_0]$ is the identity in $\pi_1(\dot K_P^{F_\bu})$. Then $\bar\de_0=\pi\ci\de_0$ is the constant loop at $[\nabla_P]$, and we may take $[\nabla_Q]$ above to be $[\nabla_Q]=\Pi_P([\nabla_P])$. As in Theorems \ref{os5thm2}(i) and \ref{os5thm3}(c)(i), the fibres $M_P^{F_\bu}\vert_{\bar\de_0}$ and $(\check O_{Q_{\bar\de_0}}^{E_\bu}\ot_{\Z_2}\Nu_{Q_{\bar\de_0},P}^*(\check O_{P\t\cS^1}^{E_\bu}))\vert_{[\nabla_{Q_{\bar\de_0}}]}$ are canonically trivial, and $\ga_{Q_{\bar\de_0},P}^{F_\bu}\vert_{[\nabla_{Q_{\bar\de_0}}]}$ and $\check\Om^{E_\bu}_{Q_{\bar\de_0}}$ preserve these trivializations. Since $m_{\de_0}$ corresponds to $1\in\Z_2$ under the trivialization $M_P^{F_\bu}\vert_{\bar\de_0}\cong\Z_2$, we see that $\la_P([\de_0])=1$, so $\la_P$ preserves identities.

More generally, let $n\in\Z$ and define $\de_n:\cS^1\ra\dot K_P^{F_\bu}$ by $\de_n(e^{i\th})=e^{in\th}\cdot x_0$, so $[\de_n]=\mu_P(n)$. Then $\bar\de_n=\pi\ci\de_n$ is again the constant loop at $[\nabla_P]$, but now $m_{\de_n}$ corresponds to $(-1)^n\in\Z_2$ under the trivialization $M_P^{F_\bu}\vert_{\bar\de_n}\cong\Z_2$. Hence $\la_P\ci\mu_P(n)=\la_P([\de_n])=(-1)^n$, as we have to prove.

Next let $\ep_1,\ep_2,\ep_{12}:\cS^1\ra\dot K_P^{F_\bu}$ be smooth loops based at $x_0$ with $\ep_{12}$ isotopic to the (piecewise smooth) composition $\ep_1*\ep_2$ used to define multiplication `$\cdot$' in $\pi_1(\dot K_P^{F_\bu})$, so that $[\ep_1]\cdot[\ep_2]=[\ep_{12}]=[\de_0]\cdot[\ep_{12}]$ in $\pi_1(\dot K_P^{F_\bu})$. Let $\bar\de_0,\bar\ep_1,\bar\ep_2,\bar\ep_{12}$ be the loops in $\B_P$, and $Q_{\bar\de_0},Q_{\bar\ep_1},Q_{\bar\ep_2},Q_{\bar\ep_{12}}$ the principal $\U(m)$-bundles over $X\t\cS^1$, and $\nabla_{Q_{\bar\de_0}},\nabla_{Q_{\bar\ep_1}},\nabla_{Q_{\bar\ep_2}},\nabla_{Q_{\bar\ep_{12}}}$ the connections, corresponding to $\de_0,\ep_1,\ep_2,\ep_{12}$ as above. Then $Q_{\bar\ep_1}\op Q_{\bar\ep_2}$ and $Q_{\bar\de_0}\op Q_{\bar\ep_{12}}$ are principal $U(2m)$-bundles over~$X\t\cS^1$. 

We claim there is an isomorphism $Q_{\bar\ep_1}\op Q_{\bar\ep_2}\cong Q_{\bar\de_0}\op Q_{\bar\ep_{12}}$. To see this, note that as $Q_{\bar\de_0},Q_{\bar\ep_1},Q_{\bar\ep_2},Q_{\bar\ep_{12}}$ are isomorphic to $P$ on $X\t\{1\}$, we can choose $Q_{\bar\ep_1},\nabla_{Q_{\bar\ep_1}}$ to be isomorphic to $\pi_X^*(P)$, $\pi_X^*(\nabla_P)$ on $X\t\{e^{i\th}:\Im e^{i\th}\le\ha\}$ and $Q_{\bar\ep_2},\nabla_{Q_{\bar\ep_2}}$ to be isomorphic to $\pi_X^*(P)$, $\pi_X^*(\nabla_P)$ on $X\t\{e^{i\th}:-\ha\le\Im e^{i\th}\}$. Similarly we can choose $Q_{\bar\ep_{12}},\nabla_{Q_{\bar\ep_{12}}}$ to be isomorphic to $\pi_X^*(P)$, $\pi_X^*(\nabla_P)$ on $X\t\{e^{i\th}:-\ha\le\Im e^{i\th}\le\ha\}$, and to be isomorphic to $Q_{\bar\ep_1},\nabla_{Q_{\bar\ep_1}}$ on $X\t\{e^{i\th}:\Im e^{i\th}\ge\ha\}$, and to be isomorphic to $Q_{\bar\ep_2},\nabla_{Q_{\bar\ep_2}}$ on~$X\t\{e^{i\th}:-\ha\ge\Im e^{i\th}\}$.

Then the difference between $Q_{\bar\ep_1}\op Q_{\bar\ep_2},\nabla_{Q_{\bar\ep_1}}\op\nabla_{Q_{\bar\ep_2}}$ and $Q_{\bar\de_0}\op Q_{\bar\ep_{12}},\nabla_{Q_{\bar\de_0}}\op\nabla_{Q_{\bar\ep_{12}}}$ is that on $X\t\{e^{i\th}:\Im e^{i\th}\ge\ha\}$, the two $\U(m)$-bundle factors are exchanged. We can continuously deform $Q_{\bar\ep_1}\op Q_{\bar\ep_2},\nabla_{Q_{\bar\ep_1}}\op\nabla_{Q_{\bar\ep_2}}$ to $Q_{\bar\de_0}\op Q_{\bar\ep_{12}},\nabla_{Q_{\bar\de_0}}\op\nabla_{Q_{\bar\ep_{12}}}$, producing an isomorphism $Q_{\bar\ep_1}\op Q_{\bar\ep_2}\cong Q_{\bar\de_0}\op Q_{\bar\ep_{12}}$, by `rotating' the two factors $P\op P$ between themselves on the regions $X\t\{e^{i\th}:-\ha\le\Im e^{i\th}\le\ha\}$, using isomorphisms of the form $\bigl(\begin{smallmatrix} \cos\psi\,\id_P & \sin\psi\,\id_P \\ -\sin\psi\,\id_P & \cos\psi\,\id_P \end{smallmatrix}\bigr)$, where $\psi$ deforms from 0 to $\pi$ on each interval in~$\{e^{i\th}\in\cS^1:-\ha\le\Im e^{i\th}\}$.

We have
\ea
&\la_P([\ep_1])\la_P([\ep_2])=\check\Om^{E_\bu}_{Q_{\bar\ep_1}}(\ga_{Q_{\bar\ep_1},P}^{F_\bu}\vert^{-1}_{[\nabla_{Q_{\bar\ep_1}}]}(m_{\ep_1}))\check\Om^{E_\bu}_{Q_{\bar\ep_2}}(\ga_{Q_{\bar\ep_2},P}^{F_\bu}\vert^{-1}_{[\nabla_{Q_{\bar\ep_2}}]}(m_{\ep_2}))
\nonumber\\
&=\bigl(\check\Om^{E_\bu}_{Q_{\bar\ep_1}}\bt \check\Om^{E_\bu}_{Q_{\bar\ep_2}}\bigr)\bigl(\ga_{Q_{\bar\ep_1},P}^{F_\bu}\bt \ga_{Q_{\bar\ep_2},P}^{F_\bu}\bigr)\big\vert^{-1}_{([\nabla_{Q_{\bar\ep_1}}],[\nabla_{Q_{\bar\ep_2}}])}\bigl(m_{\ep_1}\bt m_{\ep_2}\bigr)
\nonumber\\
&=\bigl(\check\Om^{E_\bu}_{Q_{\bar\ep_1}}\bt \check\Om^{E_\bu}_{Q_{\bar\ep_2}}\bigr)\bigl((\Ga_{Q_{\bar\ep_1},P}\t\Ga_{Q_{\bar\ep_2},P})^*(\chi_{P,P}^{F_\bu})\ci{} 
\nonumber\\
&\qquad\qquad(\ga_{Q_{\bar\ep_1},P}^{F_\bu}\bt \ga_{Q_{\bar\ep_2},P}^{F_\bu})\bigr)\big\vert^{-1}_{([\nabla_{Q_{\bar\ep_1}}],[\nabla_{Q_{\bar\ep_2}}])}\bigl(\chi_{P,P}^{F_\bu}(m_{\ep_1}\bt m_{\ep_2})\bigr)
\nonumber\\
&=\bigl(\check\Om^{E_\bu}_{Q_{\bar\ep_1}}\!\bt\! \check\Om^{E_\bu}_{Q_{\bar\ep_2}}\bigr)\bigl(\Phi_{Q_{\bar\ep_1},Q_{\bar\ep_2}}^*(\ga_{Q_{\bar\ep_1}\op Q_{\bar\ep_2},P\op P}^{F_\bu})\!\ci\!(\check\phi_{Q_{\bar\ep_1},Q_{\bar\ep_2}}^{E_\bu}\ot 
(\Nu_{Q_{\bar\ep_1},P}\t\Nu_{Q_{\bar\ep_2},P})^*{} 
\nonumber\\
&\qquad\qquad(\check\phi^{E_\bu}_{P\t\cS^1,P\t\cS^1}))\bigr)\big\vert^{-1}_{([\nabla_{Q_{\bar\ep_1}}],[\nabla_{Q_{\bar\ep_2}}])}\bigl(\chi_{P,P}^{F_\bu}(m_{\ep_1}\bt m_{\ep_2})\bigr)
\nonumber\\
&=\check\Om^{E_\bu}_{Q_{\bar\ep_1}\op Q_{\bar\ep_2}}\ci\ga_{Q_{\bar\ep_1}\op Q_{\bar\ep_2},P\op P}^{F_\bu}\big\vert^{-1}_{([\nabla_{Q_{\bar\ep_1}}\op\nabla_{Q_{\bar\ep_2}}])}\bigl(\chi_{P,P}^{F_\bu}(m_{\ep_1}\bt m_{\ep_2})\bigr),
\label{os8eq1}
\ea
using the definition of $\la_P$ in the first step, the morphism $\chi_{P,P}^{F_\bu}$ in \eq{os5eq3} in the third, equation \eq{os5eq6} in the fourth, and Theorem \ref{os5thm3}(c)(ii) in the fifth. Similarly
\e
\begin{split}
&\la_P([\de_0])\la_P([\ep_{12}])=\\
&\quad\check\Om^{E_\bu}_{Q_{\bar\de_0}\op Q_{\bar\ep_{12}}}\ci\ga_{Q_{\bar\de_0}\op Q_{\bar\ep_{12}},P\op P}^{F_\bu}\big\vert^{-1}_{([\nabla_{Q_{\bar\de_0}}\op\nabla_{Q_{\bar\ep_{12}}}])}\bigl(\chi_{P,P}^{F_\bu}(m_{\de_0}\bt m_{\ep_{12}})\bigr).
\end{split}
\label{os8eq2}
\e

Now under the isomorphism $Q_{\bar\ep_1}\op Q_{\bar\ep_2}\cong Q_{\bar\de_0}\op Q_{\bar\ep_{12}}$, which identifies $\check\Om^{E_\bu}_{Q_{\bar\ep_1}\op Q_{\bar\ep_2}}$ with $\check\Om^{E_\bu}_{Q_{\bar\de_0}\op Q_{\bar\ep_{12}}}$ and $\ga_{Q_{\bar\ep_1}\op Q_{\bar\ep_2},P\op P}^{F_\bu}$ with $\ga_{Q_{\bar\de_0}\op Q_{\bar\ep_{12}},P\op P}^{F_\bu}$, we can find a continuous deformation from $\nabla_{Q_{\bar\ep_1}}\op\nabla_{Q_{\bar\ep_2}}$ to $\nabla_{Q_{\bar\de_0}}\op\nabla_{Q_{\bar\ep_{12}}}$, and as $\ep_1*\ep_2$ is isotopic to $\de_0*\ep_{12}$, covering this deformation there is a continuous deformation from $\chi_{P,P}^{F_\bu}(m_{\ep_1}\bt m_{\ep_2})$ to $\chi_{P,P}^{F_\bu}(m_{\de_0}\bt m_{\ep_{12}})$. Therefore the right hand sides of \eq{os8eq1}--\eq{os8eq2} are equal, so that $\la_P([\ep_1])\la_P([\ep_2])=\la_P([\de_0])\la_P([\ep_{12}])$. Since $\la_P([\de_0])=1$ from above this gives $\la_P([\ep_1])\la_P([\ep_2])=\la_P([\ep_{12}])=\la_P([\ep_1]\cdot[\ep_2])$, for all $[\ep_1],[\ep_2]\in\pi_1(\dot K_P^{F_\bu})$. Thus $\la_P$ is a group morphism with~$\la_P\ci\mu_P(n)=(-1)^n$. 

Hence as above, $\la_P$ corresponds to a spin structure $\si_P^{F_\bu}$ on $\B_P$. Thus, starting with data $\check\Om^{E_\bu}_Q$ satisfying Theorem \ref{os5thm3}(c), we have constructed a unique spin structure $\si_P^{F_\bu}$ on $\B_P$ for all $\U(m)$-bundles $P\ra X$, as in Theorem \ref{os5thm3}(b). If $P_1\ra X$, $P_2\ra X$ are principal $\U(m_1)$- and $\U(m_2)$-bundles then we can use the relation between $\check\Om^{E_\bu}_{Q_1},\check\Om^{E_\bu}_{Q_2}$ and $\check\Om^{E_\bu}_{Q_1\op Q_2}$ in Theorem \ref{os5thm3}(c)(ii) to prove that $\si_{P_1}^{F_\bu},\si_{P_2}^{F_\bu},\si_{P_1\op P_2}^{F_\bu}$ are compatible as in Theorem \ref{os5thm3}(b). So this defines a map from data (c) to data (b). It is not difficult to show from the definitions that the maps (b) $\Ra$ (c), (c) $\Ra$ (b) above are inverse. This proves~(ii).

For (iii), there is an obvious forgetful map from data (c) to data (d), by restricting from $\check\Om^{E_\bu}_Q$ for all $\U(m)$-bundles $Q\ra X\t\cS^1$ to $\check\Om^{E_\bu}_Q$ for $\U(m)$-bundles $Q\ra X\t\cS^1$ such that $P=Q\vert_{X\t\{1\}}\cong X\t\U(m)$. To see that this forgetful map is a bijection, note that the proof of (b) $\Leftrightarrow$ (c) above may be restricted to bundles $P\ra X$, $Q\ra X\t\cS^1$ with $Q\vert_{X\t\{1\}}\cong P$, such that $P$ is isomorphic to a trivial $\U(m)$-bundle. This gives a 1-1 correspondence between data (b) restricted to trivial bundles $P$, and data (d). But the 1-1 correspondence between (a) and (b) implies that data (b) restricted to trivial bundles $P$ is in 1-1 correspondence with data (b). This completes the proof of Theorem~\ref{os5thm3}.

\appendix
\section{Elliptic boundary value problems}
\label{osA}

We review here the theory of elliptic boundary conditions developed by B\"ar--Ballmann \cite{BaBa}, and generalize it to families. The point is to establish Fredholm results for a large class of boundary conditions. Essentially, a finite-dimensional perturbation of the well-known Atiyah--Patodi--Singer boundary conditions (decay conditions) is allowed. This further generality is crucial for performing the necessary deformations in the proof of Theorem~\ref{os5thm2}.

Let $X$ be a compact Riemannian manifold with boundary and inward normal coordinate $\th \geq 0$. Let $E_\bu = (E_0, E_1, D)$ be a first order real elliptic operator as in Definition \ref{os2def2} with $E_0, E_1$ equipped with Euclidean metrics $h_{E_0}, h_{E_1}$ on the fibres. On a collar $Z_r$, $r>0$, of $\partial X$ we have coordinates $(y,\th) \in \partial X \t [0,r)$. Let $\pi_{\partial X}\colon Z_r \ra \partial X$, $\pi_{\partial X}(y,\th)=y$ be the projection. Set $F=E_0\vert_{\partial X}$ and choose orthogonal isomorphisms $E_0\vert_{Z_r} \cong \pi_{\partial X}^*(F)$, $E_1\vert_{Z_r} \cong \pi_{\partial X}^*(F)$. In terms of these identifications, assume that one can write
\e
D\vert_{Z_r}=J\big(
\partial/\partial \th+A_\th\big),
\label{osAeq1}
\e
where
\begin{itemize}
\setlength{\itemsep}{0pt}
\setlength{\parsep}{0pt}
\item[(i)]
$J=\sigma_{d\th}(D)\colon F \ra F$ is an orthogonal isomorphism; and
\item[(ii)]
${A_\th\colon \Ga^\iy(F) \ra \Ga^\iy(F)}$ is a $[0,r)$-family of self-adjoint elliptic operators.
\end{itemize}
Operators of this form are called \emph{boundary symmetric} and the operator $A_0$ is called a \emph{boundary operator} for $D$.
As $A_0$ is not uniquely determined by $D$, we fix a choice.
Since $A_0$ is self-adjoint and $\partial X$ compact, $L^2(F)$ decomposes into eigenspaces $\Eig^\la(A_0)$ for eigenvalues $\la \in \R$. For $k\geq 0$ let $L^2_k(F) \subset L^2(F)$ denote the $k^{\rm th}$ Sobolev space of sections.

\begin{dfn}[{B\"ar--Ballmann~\cite[Def.~7.5]{BaBa}}]
\label{osAdef1}
Let $D$ be a boundary symmetric first order real elliptic operator as in \eq{osAeq1} with boundary operator $A_0$. An \emph{elliptic boundary condition} for $D$ is a subspace ${B \subset L^2_{1/2}(F)}$ of the following form. There exists an orthogonal decomposition $L^2(F) = V_- \op W_- \op W_+ \op V_+$ and $g\colon V_- \ra V_+$ with
\begin{itemize}
\setlength{\itemsep}{0pt}
\setlength{\parsep}{0pt}
\item[(i)] closed subspaces $V_\pm$ containing all but finitely many $\Eig^\la(A_0)$, $\pm\la > 0$;
\item[(ii)] $W_\pm$ are finite-dimensional and contained in $L^2_{1/2}(F)$;
\item[(iii)] $g$ is a bounded operator with ${g\big(V_- \cap L^2_{1/2}(F)\big) \subset V_+ \cap L^2_{1/2}(F)}$ and for the adjoint ${g^*\big(V_+ \cap L^2_{1/2}(F)\big) \subset V_- \cap L^2_{1/2}(F)}$.
\end{itemize}
For an elliptic boundary condition it is then required that
\begin{equation*}
B = \left\{
\varphi \in L^2_{1/2}(F)
\enskip\middle\vert\enskip
\begin{array}{l}
\varphi = w_+ + v_- + g(v_-)\\
w_+ \in W_+\\
v_- \in V_- \cap L^2_{1/2}(F)
\end{array}
\right\}.
\end{equation*}
\end{dfn}

\begin{thm}[{B\"ar--Ballmann~\cite[Th.~8.5]{BaBa}}]
\label{osAthm1}
An elliptic boundary condition\/ ${B \subset L^2_{1/2}(F)}$ for a boundary symmetric elliptic operator\/ $D$ as in \eq{osAeq1} on a compact manifold\/ $X$ restricts to a Fredholm operator
\begin{equation*}
D_B \colon L^2_D(E_0;B)
\longrightarrow
L^2(E_1)
\end{equation*}
with domain\/ $L^2_D(E_0;B)$ the Hilbert space of weak solutions\/ $e_0 \in L^2(E_0)$ satisfying\/ $e_0\vert_{\partial X} \in B$, equipped with the graph norm\/ $\|e_0\|^2_D = \|e_0\|^2 + \|De_0\|^2$.
\end{thm}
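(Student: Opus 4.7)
The proof I have in mind follows the Bär--Ballmann strategy \cite{BaBa}, broken into three stages: (i) a functional--analytic setup of the maximal domain and a boundary trace, (ii) the Fredholm property for the Atiyah--Patodi--Singer (APS) boundary condition via a model computation on the half-cylinder, and (iii) reduction of a general elliptic boundary condition to APS by a finite-dimensional perturbation.

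First, I would introduce the maximal domain $L^2_D(E_0)=\{e\in L^2(E_0):De\in L^2(E_1)\}$, equipped with the graph norm $\|e\|_D^2=\|e\|^2+\|De\|^2$, and develop the extended boundary trace. Using the collar form \eq{osAeq1} and separation of variables against the eigenbasis of $A_0$, one shows that every $e\in L^2_D(E_0)$ has a well-defined boundary restriction in the hybrid Sobolev space
\[
\check{H}(A_0)\;=\;\Bigl(\!\bigoplus_{\la<0}\Eig^\la(A_0)\Bigr)^{-1/2}\op \Bigl(\!\bigoplus_{\la\ge 0}\Eig^\la(A_0)\Bigr)^{+1/2},
\]
where $\pm 1/2$ denotes the completion in the graph norm of $|A_0|^{\pm 1/2}$. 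Interior elliptic regularity is standard, while boundary regularity is obtained by expanding in eigenfunctions of $A_0$ on the collar and estimating each Fourier mode separately. The key point is that $e\vert_{\pd X}\in L^2_{1/2}(F)$ iff its negative-spectral part also lies in $L^2_{1/2}$.

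Second, I would establish the theorem for the APS boundary condition $B_{\mathrm{APS}}=L^2_{1/2}(F)\cap \bigoplus_{\la<0}\Eig^\la(A_0)$, i.e.\ the case $V_-=\bigoplus_{\la<0}\Eig^\la(A_0)$, $V_+=\bigoplus_{\la\ge 0}\Eig^\la(A_0)$, $W_\pm=0$, $g=0$. On the half-cylinder $[0,\iy)\t\pd X$ with $\pd/\pd\th+A_0$ acting on $\pi_{\pd X}^*(F)$, Fourier expansion in the eigenbasis of $A_0$ yields an explicit parametrix: each positive eigenmode contributes an exponentially decaying solution, and the boundary condition selects these uniquely. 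Gluing the half-cylinder parametrix to an interior parametrix (coming from the usual pseudodifferential calculus on the double of $X$) produces a bounded inverse modulo compact operators, proving $D_{B_{\mathrm{APS}}}$ is Fredholm. This is the technical heart of the argument and the main obstacle, since one must carefully control the hybrid Sobolev trace in terms of both $L^2_{1/2}$ and $L^2_{-1/2}$ components.

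Finally, I would pass from the APS case to a general elliptic boundary condition $B$. By hypothesis~(i), $V_\pm$ contain all but finitely many positive/negative eigenspaces of $A_0$, so the symmetric difference of $B$ and $B_{\mathrm{APS}}$ is governed by a finite-dimensional subspace of $L^2_{1/2}(F)$ together with the bounded operator $g:V_-\ra V_+$. Writing $L^2_D(E_0;B)$ as the preimage under the bounded trace map $L^2_D(E_0)\ra\check{H}(A_0)$ of the closed subspace $B$, one checks directly that $L^2_D(E_0;B)$ and $L^2_D(E_0;B_{\mathrm{APS}})$ differ inside $L^2_D(E_0)$ by a closed subspace of finite codimension in each, with the quotients identified via the finite-rank data $(W_\pm, g\vert_{\mathrm{finite part}})$. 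Since the Fredholm property and index are stable under finite-dimensional and relatively compact perturbations of the domain, $D_B$ inherits Fredholmness from $D_{B_{\mathrm{APS}}}$. Additivity/semicontinuity arguments \cite[\S 8]{BaBa} then give an explicit index formula relating $\ind D_B$ to $\ind D_{B_{\mathrm{APS}}}$, which will be convenient in the continuous family version needed in~\S\ref{os7}.
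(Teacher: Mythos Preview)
The paper does not give its own proof of this theorem: it is stated in Appendix~\ref{osA} purely as a citation of B\"ar--Ballmann \cite[Th.~8.5]{BaBa}, with no argument supplied. Your outline is a reasonable sketch of the B\"ar--Ballmann proof itself (hybrid trace space $\check H(A_0)$, APS model on the half-cylinder, reduction of a general elliptic boundary condition to APS via finite-rank perturbation), but there is nothing in the paper to compare it against; the authors simply import the result and use it as a black box in \S\ref{os7}.
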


All boundary conditions we use are variations of the following two examples.

\begin{ex}\label{osAex3}
Let $\mu \in \R$. The \emph{Atiyah--Patodi--Singer boundary
conditions}
\begin{equation*}
B_\mu^\mathrm{APS} \coloneqq \Eig^{(-\iy, \mu)}(A_0)
\end{equation*}
are elliptic, as ${V_- = \Eig^{(-\iy, \mu)}(A_0)}$, ${V_+ = \Eig^{[\mu, \iy)}(A_0)}$, ${g=0}$, and ${W_\pm = \{0\}}$.
\end{ex}

\begin{ex}\label{osAex4}
Let $X'$ be obtained from a closed manifold $X$ by cutting along a hypersurface $Y$, so $\partial X' = -Y \sqcup Y$. Let $E'_\bu=(E'_0,E'_1,D')$ be the pullback of $E_\bu$ to $X'$. Then $L^2(\partial X',E'_0\vert_{\partial X'})=L^2(Y,F) \op L^2(Y,F)$, and we have boundary operators $\pm A_0$. \emph{Transmission boundary conditions} are elliptic
\begin{equation*}
B^\mathrm{trans} \coloneqq \bigl\{(\varphi,\varphi) \in L^2_{1/2}(\partial X',E'_0\vert_{\partial X'})\bigr\},
\end{equation*}
as $V_+ = \Eig^{(0,\iy)}(A_0) \op \Eig^{(-\iy,0)}(A_0),$
$V_- = \Eig^{(-\iy,0)}(A_0) \op \Eig^{(0,\iy)}(A_0)$,
$W_\pm = \{(\varphi,\pm \varphi) \in L^2(\partial X',E'_0\vert_{\partial X'}) \mid \varphi \in \Ker A_0\},$
and $g=\operatorname{swap}$.
\end{ex}

We now generalize to families.

\begin{dfn}
\label{osAdef2}
Let $T$ be a paracompact Hausdorff topological space and $E_0, E_1 \ra X \t T$ real vector bundles with Euclidean metrics on the fibres and set $F=E_0\vert_{\partial X \t T}$. Consider a $T$-family $D$ of first order real elliptic differential operators
\begin{equation*}
D\vert_{X\t\{t\}} \colon \Ga^\iy\left(X,E_0\vert_{X\t\{t\}}\right) \longra \Ga^\iy\left(X,E_1\vert_{X\t\{t\}}\right), \qquad t \in T.
\end{equation*}
Suppose each $D\vert_{X\t\{t\}}$ has a decomposition \eq{osAeq1} with a bundle automorphism $J$ of $F$ and a $T\t[0,r)$-family of self-adjoint operators $A$. A \emph{$T$-family of elliptic boundary conditions} for $D$ is a Hilbert subbundle $B$ of the bundle of fibrewise sections $L^2_{1/2}(\partial X, F) \ra T$ with the property that each $B\vert_t$ is an elliptic boundary condition for $D\vert_{X\t\{t\}}$ and $A_0\vert_{\partial X \t \{t\}}$, in the sense of Definition~\ref{osAdef1}.
\end{dfn}

\begin{prop}
\label{osAprop1}
Let\/ $B$ be a\/ $T$-family of elliptic boundary conditions for a\/ $T$-family of operators\/ $D$ as in Definition~{\rm\ref{osAdef2}}. Then\/ $L^2_D(E_0;B) \ra T$ is a Hilbert bundle and\/ $D_B \colon L^2_D(E_0;B) \ra L^2(E_1)$ is a\/ $T$-family of bounded operators.      
\end{prop}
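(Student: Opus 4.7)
The claim is local over $T$, so fix $t_0\in T$ and construct a trivialization over a neighborhood $U$ of $t_0$. Using the continuity of $E_0,E_1\ra X\t T$, I would first shrink $U$ to trivialize these bundles and identify $L^2(E_i\vert_{X\t\{t\}})$ with $L^2(E_i\vert_{X\t\{t_0\}})$ for all $t\in U$. By the standing hypothesis that the decomposition \eqref{osAeq1} holds with $J$ a $t$-independent bundle automorphism of $F$, one may further arrange that the principal symbol of $D\vert_t$ is $t$-independent (this is forced near $\pd X$, and can be extended to the interior after possibly shrinking $U$). Then $D\vert_t-D\vert_{t_0}$ is a zeroth-order operator depending continuously on $t$, and consequently the maximal domain
\begin{equation*}
H(D\vert_t)=\{e\in L^2(E_0):D\vert_t e\in L^2(E_1)\}
\end{equation*}
is the same subset of $L^2(E_0)$ for all $t\in U$, with graph norms $\|e\|^2+\|D\vert_t e\|^2$ that are mutually equivalent and vary continuously in $t$. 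Hence $H(D)\ra U$ is a topological Hilbert bundle, trivialized by the identity.

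Next, by \cite[Th.~6.7]{BaBa}, for each $t$ the trace $\tau_t\colon H(D\vert_t)\ra H^{-1/2}(F)$ is bounded, and varies continuously in $t$ since the coefficients of $D\vert_t$ near $\pd X$ do. The fibre $L^2_D(E_0;B)\vert_t=\tau_t^{-1}(B\vert_t)$ is a closed subspace of $H(D\vert_t)$. To produce a local trivialization of $L^2_D(E_0;B)\ra U$, I would construct a continuous family of bounded automorphisms $\Phi_t\colon H(D\vert_{t_0})\ra H(D\vert_{t_0})$ with $\Phi_t\bigl(L^2_D(E_0;B)\vert_{t_0}\bigr)=L^2_D(E_0;B)\vert_t$. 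The natural ansatz is $\Phi_t=\id+\mathrm{ext}_t\ci(P_t\ci\tau_{t_0}-\tau_{t_0})$, where $P_t\colon L^2_{1/2}(F)\ra B\vert_t$ is the continuous family of projections coming from the Hilbert subbundle structure of $B$, and $\mathrm{ext}_t\colon L^2_{1/2}(F)\ra H(D\vert_t)$ is a Poisson-type continuous right-inverse to $\tau_t$. The existence and continuity in $t$ of $\mathrm{ext}_t$ is the main technical input, extracted from the construction in B\"ar--Ballmann \cite[\S 7--8]{BaBa} using the spectral decomposition of the $T$-family $A_0\vert_t$ of self-adjoint elliptic operators on the boundary.

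Finally, continuity of $D_B$ as a $T$-family of bounded operators is automatic: the family $D\vert_t$ of differential operators varies continuously through zeroth-order coefficients, so is continuous as a family of bounded operators from the fixed set $H(D\vert_{t_0})$ (with any of its equivalent graph norms) to $L^2(E_1)$, and restricting to the subspaces $L^2_D(E_0;B)\vert_t$ yields the required continuous family. The main obstacle is the parametrized extension operator $\mathrm{ext}_t$: one must simultaneously control the infinite-dimensional spectral subspaces $V_\pm\vert_t$ of $A_0\vert_t$, the finite-dimensional corrections $W_\pm\vert_t$, and the bounded operator $g\vert_t$ from Definition \ref{osAdef1}, all of which vary continuously in $t$ by Definition \ref{osAdef2}, and assemble them into a continuous family of extensions of boundary values. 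This parametrized refinement of the B\"ar--Ballmann theory is where the substance of the proof lies, and is handled in spirit by adapting the Kato-type perturbation arguments for analytic families of self-adjoint operators to the elliptic boundary value setting.
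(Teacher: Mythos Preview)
The paper states Proposition~\ref{osAprop1} without proof; it is simply asserted in the appendix and then used. So there is no argument to compare your proposal against.

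On its own merits, your outline is the natural one: localize in $T$, trivialize the ambient Hilbert bundles, show the maximal domains $H(D\vert_t)$ assemble into a bundle, and then use a continuous family of boundary projections together with a parametrized extension operator to cut out $L^2_D(E_0;B)$ as a subbundle. Two points deserve more care. First, Definition~\ref{osAdef2} does not say $J$ is $t$-independent: $J$ is a bundle automorphism of $F=E_0\vert_{\pd X\t T}$ and may vary with $t$, so your reduction to ``$D\vert_t-D\vert_{t_0}$ is zeroth order'' is not automatic. One can still show the maximal domains coincide locally in $t$ (or absorb the variation of $J$ into the local trivialization), but this needs an argument beyond what you wrote. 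Second, as you yourself flag, the continuous dependence of the extension operator $\mathrm{ext}_t$ on $t$ is the real content; it requires checking that the spectral projections of the boundary operators $A_0\vert_t$ vary continuously across spectral gaps, and that the finite-rank data $V_\pm,W_\pm,g$ in Definition~\ref{osAdef1} can be chosen to depend continuously on $t$ locally. Your sketch is honest about where the difficulty lies, and the shape of the argument is correct.
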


\begin{prop}
\label{osAprop2}
Let\/ $D$ be a\/ $T$-family of elliptic operators as in Definition\/ {\rm\ref{osAdef2}} and let\/ ${B' \subset B \subset L^2_{1/2}(F)}$ be a pair of\/ $T$-families of elliptic boundary conditions for\/ $D$ with\/ $B/B'$ of finite rank. Then we get a canonical isomorphism of determinant line bundles of the corresponding families of Fredholm operators
\e
\label{osAeq2}
\det_{\R}(D_B)
\longrightarrow
\det_{\R}(D_{B'})
\ot 
\det_{\R}(B/B').
\e
If\/ ${B'' \subset B' \subset B}$ are elliptic boundary conditions
for\/ $D$, with\/ $B/B''$ of finite rank, then \eq{osAeq2} for $B/B',B'/B'',B/B''$ are associative in the obvious~way.
\end{prop}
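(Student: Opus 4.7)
The plan is to realise the isomorphism \eqref{osAeq2} via a short exact sequence of two-term Fredholm complexes over $T$ and then invoke the multiplicativity of determinant line bundles under short exact sequences. First I would verify that boundary restriction induces a short exact sequence of Hilbert bundles over $T$
\begin{equation*}
0 \longrightarrow L^2_D(E_0;B') \longrightarrow L^2_D(E_0;B) \xrightarrow{\;r\;} B/B' \longrightarrow 0,
\end{equation*}
where $r(e_0)=e_0\vert_{\pd X}+B'$. Exactness in the middle is immediate from the definitions. Surjectivity of $r$ requires that for each $b\in B$ there exists $e_0\in L^2_D(E_0;B)$ with $e_0\vert_{\pd X}=b$; using a cut-off in the normal coordinate $\th$ together with the continuous extension $L^2_{1/2}(F)\to L^2(E_0)$ built from \eqref{osAeq1}, one produces such an $e_0$ whose graph norm is controlled by $\|b\|_{1/2}$. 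Continuity in $t\in T$ follows because the cut-off and extension are made from data varying continuously in $T$, so $L^2_D(E_0;B)\to T$ acquires the Hilbert bundle structure.

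Next, this sequence fits into a short exact sequence of two-term Fredholm complexes over $T,$ concentrated in degrees $0,1,$ with rightmost complex $(B/B'\to 0)$:
\begin{equation*}
0 \to \bigl(L^2_D(E_0;B')\xrightarrow{D_{B'}}L^2(E_1)\bigr) \to \bigl(L^2_D(E_0;B)\xrightarrow{D_B}L^2(E_1)\bigr) \to \bigl(B/B'\to 0\bigr) \to 0.
\end{equation*}
By the standard multiplicativity of determinant line bundles of families of Fredholm complexes under short exact sequences, with conventions fixed as in \cite[\S 3]{Upme}, this produces the canonical isomorphism \eqref{osAeq2}. Equivalently, at each $t\in T$ one obtains \eqref{osAeq2} by taking alternating determinants in the snake-lemma six-term sequence
\begin{equation*}
0\to \Ker D_{B'}\to \Ker D_B\to B/B'\to \Coker D_{B'}\to \Coker D_B\to 0,
\end{equation*}
and continuity in $T$ follows from continuity of determinant line bundles for families of bounded Fredholm operators.

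For the associativity statement with $B''\subset B'\subset B$, the short exact sequence $0\to B'/B''\to B/B''\to B/B'\to 0$ of finite-rank bundles over $T$ gives a canonical isomorphism $\det_\R(B/B'')\cong \det_\R(B'/B'')\ot \det_\R(B/B')$. The two ways of decomposing $\det_\R(D_B),$ namely via the inclusions $B''\subset B'\subset B$ applied successively, and via the single inclusion $B''\subset B$ followed by the above factorisation of $\det_\R(B/B'')$, fit into a $3\t 3$ diagram of nested short exact sequences of Fredholm complexes. The compatibility of the two routes then reduces to the standard associativity of determinant-line isomorphisms attached to such a nested diagram (the nine-lemma for determinant lines), and so no further work is required.

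The main obstacle is the first step: showing that $r$ is surjective and that the resulting short exact sequence is one of Hilbert bundles varying continuously in $T$. The subtlety is that the decomposition $B=W_+\op (1+g)V_-$ in Definition \ref{osAdef1} need not be locally constant in $t,$ so one cannot construct the extension summand by summand; instead one must work intrinsically with the graph norm on $L^2_D(E_0;B)$ and use the uniform collar extension coming from \eqref{osAeq1}. Once this analytic point is in place, the rest of the proof is purely formal.
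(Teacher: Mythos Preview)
Your proposal is correct, but the paper takes a different and somewhat slicker route. Instead of keeping the boundary condition in the domain and comparing $D_{B'}$ and $D_B$ via a short exact sequence of complexes, the paper moves the boundary condition into the \emph{target}: letting $K=B^\perp\subset L^2_{1/2}(F)$, it defines
\[
D^K\colon L^2_D(X)\longrightarrow L^2(X)\oplus K,\qquad f\longmapsto\bigl(Df,\ \pi_K\operatorname{res}_{\partial X}(f)\bigr),
\]
which has the same kernel and cokernel as $D_B$, and similarly $D^{K'}$ for $K'=(B')^\perp$. Since both operators now share the \emph{same} domain $L^2_D(X)$, one simply observes $D^K=(\id_{L^2(X)}\oplus p)\circ D^{K'}$ for the projection $p\colon K'\to K$ with $\Ker p\cong B/B'$, and \eqref{osAeq2} drops out of the multiplicativity of determinants under composition. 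This neatly sidesteps the analytic obstacle you correctly flag --- surjectivity of $r$ and the family Hilbert-bundle structure on $L^2_D(E_0;B)$ --- because no variation of domains is involved. Your snake-lemma approach is more conceptual and arguably closer in spirit to how such identities are handled in algebraic geometry, and it has the virtue of making the associativity statement transparent via the nine-lemma (the paper does not spell out the associativity argument at all). But for the analytic setting here the paper's composition trick is the shorter path.
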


\begin{proof}
Let $K$ be the orthogonal complement of $B$ in $L^2_{1/2}(F)$. Then
\begin{equation*}
D^K \colon L^2_{D}(X) \longrightarrow L^2(X) \op K,
\quad
f \longmapsto
\big(Df, \pi_{K}\operatorname{res}_{\partial X}(f)\big),
\end{equation*}
has the same kernel and cokernel as $D_B$ and so $\det_\R D^K = \det_\R D_B$.
Let $K'$ be the orthogonal complement
of $B'$. Then ${D^K = (\id_{L^2(X)} \op p)\circ D^{K'}}$ for the
projection ${p \colon K' \ra K}$, whose kernel is canonically isomorphic to $B/B'$.
As determinants are compatible with composition and direct sums, this implies
\begin{align*}
\det_\R D^K
&\cong
\det_\R D^{K'} \ot \det_\R (\id_{L^2(X)} \op p)\\
&\cong
\det_\R D^{K'} \ot \det_\R p\\
&\cong
\det_\R D^{K'} \ot \det_\R(B/B'). \qedhere
\end{align*}
\end{proof}

Using \cite[(62)]{BaBa} we find
\e
\label{osAeq3}
\det_\R(D_B) \cong \det_\R \Ker( D_B ) \ot \big( \det_\R \Ker  (D^*)_{B^\mathrm{adj}} \big)^*
\e
for the \emph{adjoint boundary condition} of \cite[(63)]{BaBa}.

\medskip

\noindent{\small\sc The Mathematical Institute, Radcliffe
Observatory Quarter, Woodstock Road, Oxford, OX2 6GG, U.K.

\noindent E-mails: {\tt joyce@maths.ox.ac.uk, 
upmeier@maths.ox.ac.uk.}}

\end{document}